\documentclass{article}
\usepackage[utf8]{inputenc}
\usepackage[T1]{fontenc}
\usepackage{graphicx} % Required for inserting images
\usepackage{amsmath}
\usepackage{amsthm}
\usepackage{amssymb}
\usepackage{mathtools}
\usepackage{enumitem}
\usepackage{float}
\usepackage{subfig}
\usepackage{cancel}
\usepackage{romannum}
\usepackage[
    top=0.75in,
    bottom=0.75in,
    left=0.9in,
    right=0.9in
]{geometry}
\usepackage{setspace} 
\usepackage{empheq}
\usepackage{titlesec, titletoc} 
\usepackage[numbers,sort]{natbib}
\usepackage{url} 
\usepackage[colorlinks,  citecolor=green, linkcolor = black, urlcolor = cyan]{hyperref}

\theoremstyle{plain}
\newtheorem{theorem}{Theorem}

\newtheorem{lemma}[theorem]{Lemma}

\theoremstyle{definition}

\theoremstyle{remark}
\newtheorem{remark}[theorem]{Remark}

\newcommand{\R}{\mathbb{R}}

\newcommand{\bfxi}{\boldsymbol{\xi}}

\newcommand{\bfeta}{\boldsymbol{\eta}}
\newcommand{\eps}{\varepsilon}

\DeclareMathOperator*{\argmin}{arg\,min}

\title{A Phase-Field Method for Curvature Flow of Networks\\ with Triple Junction Drag}
\author{Yuchuan Yang\thanks{{Department of Mathematics, University of Michigan. email: \texttt{yuchuan@umich.edu}}} \and Selim Esedo\={g}lu\thanks{Department of Mathematics, University of Michigan. email: \texttt{esedoglu@umich.edu}}}
\date{July 2026}

\begin{document}

\pagenumbering{arabic}

\maketitle

\begin{abstract}
We develop a new phase-field (diffuse interface) approximation of multiphase curvature motion with triple junction drag -- an important sharp interface model for the evolution of microstructure in polycrystalline materials during heat treatment.
This sharp interface model arises as gradient flow for the total length of the interfacial network with respect to a certain metric.
Accordingly, we derive our diffuse interface approximation -- a coupled system of partial differential equations  that is a variant of the Allen-Cahn system -- from a variational perspective, in the style of minimizing movements: starting from a discrete in time approximation that entails a convex optimization problem to advance from one time step to the next.
In the process, we propose a simple integral expression that counts the number of junctions using the order parameter that appears to be new even for the standard multiphase Allen-Cahn system.
The convergence of the resulting flow to the desired sharp interface limit is then verified via the method of matched asymptotic expansions.
Numerical convergence studies against both known exact solutions as well as highly accurate benchmark solutions obtained via front tracking provide clear further evidence for this convergence.
Moreover, the method retains the most desirable feature of diffuse interface methods: Automatic handling of topological changes in the network of interfaces.
\end{abstract}

\section{Introduction}
\label{sec:intro}
Multiphase mean curvature motion of interfacial networks plays a central role in many applications.
For example, in materials science, it gets used since the work of Mullins \cite{mullins} in 1950's to model the evolution of grain boundaries in polycrystalline materials during annealing.
One of the challenging aspects of this dynamics is the presence of free boundaries known as junctions along which three or more interfaces meet.
The motion of junctions is part of the unknown in the problem; it is uniquely determined by the (mean) curvature motion of the interfaces away from the junctions, and free boundary conditions that are imposed along the junctions.
The most well known boundary condition along junctions is that of {\em Herring} \cite{herring} which, in physical terms, stipulates that surface tension forces applied by the interfaces on a triple junction must vanish at all times.
The force balance can equivalently be expressed in terms of the angles formed between the interfaces intersecting along a given triple junction, resulting in what is known as the {\em Herring angle condition}.
During the evolution, interfaces and junctions may collide, pinch off, or altogether disappear, and new junctions may be nucleated.
To deal with these inevitable topological events, implicit interface methods such as phase-field (see e.g. \cite{lqchen, lqchen2, fan}), level-set (see e.g. \cite{osher_sethian,elsey1,bernacki,elsey2,saye_sethian_2011}), and threshold dynamics (see e.g. \cite{mbo92,esedoglu_otto,elsey3,peng_2022,rohrer_annual}) have been developed and employed.
There have been numerous large scale simulations of microstructural evolution reported with this model and its variants in the literature, with varying degrees of success in reproducing certain statistics, such as the grain size distribution or the misorientation distribution function, that are of interest to materials scientists.

Recently, detailed non-destructive experimental measurements of grain boundary motion made possible by high energy diffraction microscopy and diffraction contrast tomography have spurred intense activity in comparing simulation results against individual grain boundaries in 4D data \cite{mckenna,peng_2022,rohrer_annual}.
Significant differences have been noted, leading to speculation that Mullins' traditional model may be inadequate or require substantial modification.
One possible modification that has been proposed is {\em triple junction drag} \cite{GOTTSTEIN2002703,barmak,epshteyn,yangesedoglu25}: The traditional Herring angle condition is replaced by a relaxed version which, in physical terms, no longer requires instantaneous force balance.
Instead, triple junctions move with finite speed to restore the force balance.

There is a shortage of numerical methods in the literature suitable for large scale simulations of grain boundary motion with triple junction drag that can seamlessly handle topological changes in two or three dimensions.
The major exception is \cite{johnsonvoorhees} that proposes a phase-field method; see also the very recent follow-ups \cite{miyoshi2025,miyoshi2026} that carry out large scale simulations using closely related methods.
The primary goal of \cite{johnsonvoorhees,miyoshi2025,miyoshi2026} appear to be capturing some {\em qualitative} implications of junction drag, without claiming convergence to the sharp interface description of the evolution (no careful convergence study is offered).
That level of qualitative approximation may well suffice to explore how junction drag influences statistical measures,  such as size distribution, of grain networks.
However, as we demonstrate in Section 3, the approach of \cite{johnsonvoorhees} does not always converge {\em quantitatively} to the solution of the desired system of partial differential equations (PDEs) that describe curvature motion with triple junction drag; see equation  (\ref{eq:pde}) in Section 2 below for a precise statement of this system.
Recent impetus in the materials science community towards comparing the evolution of {\em individual} interfaces in simulations against experimental measurements goes far beyond statistics, and calls for numerical methods that are guaranteed to converge to the intended PDE model in the quantitative, traditional numerical analysis sense.
The purpose of the present study is to address the lack of such methods in existing literature.

Our task in this work is thus to follow up on works such as \cite{johnsonvoorhees,miyoshi2025} by taking a further step towards developing phase-field methods that are equally robust (in handling topological changes) and practical (suitable for simulations with many grains in two or three dimensions), but also {\em convergent} to the well-known, precise sharp interface limit (\ref{eq:pde}) of curvature motion with triple junction drag.
To that end, we propose a novel phase-field method for triple junction drag, substantiate its consistency by formal matched asymptotic analysis, and investigate its convergence with careful numerical experiments against exact solutions as well as highly accurate benchmarks obtained via front tracking. We also demonstrate how topological changes are automatically handled, as is expected from any practical phase-field method.

\section{Sharp-interface formulation}
\label{sec:sharpinterface}
In this section, we give a precise description of {\em multiphase curvature motion with triple junction drag} that appears in models of microstructural evolution.
It constitutes the precise dynamics, described by a PDE system, that existing numerical methods such as \cite{johnsonvoorhees,miyoshi2025,miyoshi2026} as well as the new one proposed in this study aim to approximate.
As the situation in $\mathbb{R}^2$ is already interesting, we focus on planar networks in the remainder of the paper, but always with an eye towards seamless extension to $\mathbb{R}^3$ (a hallmark of phase-field methods), which will be taken up in subsequent work.

We fix $\Omega\subset\R^2$ as our domain and let $\Sigma_i(t)\subset\Omega$, 
$i=1,2,3$, be the (evolving) region occupied by the $i-$th grain. The sets $\Sigma_i$ form a partition of $\Omega$ so that $\Omega = \Sigma_1 \cup \Sigma_2 \cup \Sigma_3$ and $|\Sigma_i\cap\Sigma_j| = 0$ whenever $i\neq j$. 
(We write $|S|$ to denote the area of a set $S\subset\mathbb{R}^2$).
Let $\Gamma_i(t)$ denote the boundary between $\Sigma_{i-1}(t)$ and $\Sigma_i(t)$ (Figure \ref{fig:network}).
We assume that the curves $\Gamma_i(t)$ meet at a triple junction $p(t)$ for the entire duration of the evolution considered. \textit{Motion by curvature with triple junction drag} refers to the evolution of the curves $\Gamma_i(t)$ governed by the law:
\begin{align}\label{curvaturemotion}
    v_i = \sigma_i m_{GB}^{(i)} \kappa_i,  \qquad \text{along }\Gamma_i 
\end{align}
\begin{align}\label{drag}
    \frac{d}{dt}p(t) = m_{TJ} \sum_{i=1}^3 \sigma_i \tau_i, \qquad \text{at the triple junction}
\end{align}
Here, $\sigma_i$ is the surface tension coefficient associated with the interface $\Gamma_i$, $m_{GB}$ is the mobility of the grain boundary, $m_{TJ}$ is the mobility of the triple junction, $v_i$ is the normal velocity of $\Gamma_i$, $\kappa_i$ is the curvature of $\Gamma_i$ and $\tau_i$ is the unit tangent vector of $\Gamma_i$ at the triple junction $p(t)$, pointing away from it.

\begin{figure}[H]
    \centering
    \includegraphics[width=0.7\linewidth]{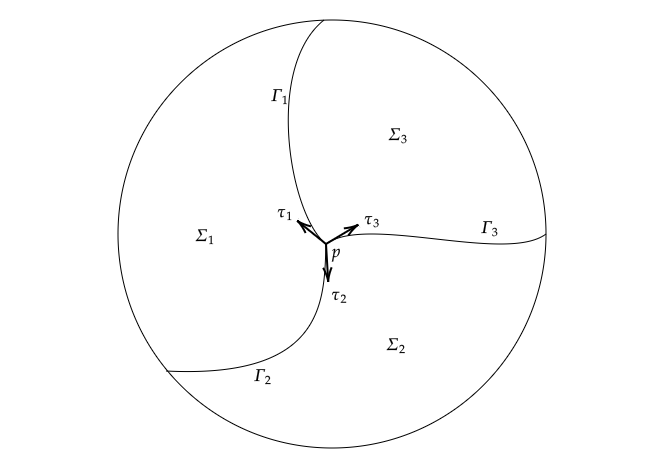}%
    \caption{A network with a single triple junction}
    \label{fig:network}%
\end{figure}

The system \eqref{curvaturemotion} \& \eqref{drag} has an underlying variational principle (see \cite{yangesedoglu25}), which will be the basis of our phase-field model. 
In words, \eqref{curvaturemotion} \& \eqref{drag} is a formal gradient flow of the total length functional
\begin{align}
    \mathcal{L}(\Gamma) \coloneqq \sum_{i=1}^3 \sigma_i \text{Length}(\Gamma_i)
\end{align}
with respect to a metric comprising
\begin{enumerate}
    \item the $L^2$ norm of the perturbation to the curves in the normal direction and
    \item the Euclidean norm of the perturbation to the triple junction's location.
\end{enumerate}
To be precise, let $\gamma_i(\cdot,t)$ be parametrizations of the curves $\Gamma_i(t)$ so that $\gamma_i(0,t) = p(t)$, the location of the triple junction.
Then \eqref{curvaturemotion} \& \eqref{drag} is formally gradient flow for the energy
\begin{equation}
\label{eq:parametricenergy}
\mathcal{L}(\Gamma) = \sum_{i=1}^3 \sigma_i \int_0^1 |\partial_x \gamma_i| \, dx
\end{equation}
with respect to the following inner product (modulo degeneracy in the tangential direction) defined on a pair of triplets of vector fields $\bfxi = \{\xi^i\}_{i=1}^3$ and $\bfeta := \{\eta^i\}_{i=1}^3$:
\begin{equation}
\label{eq:metric}
\langle \bfxi , \bfeta  \rangle \coloneqq \sum_{i=1}^3 \frac{1}{m_{GB}^{(i)}}\int_0^1 \big(\xi^i(x) \cdot \nu_i(x)\big)\big( \eta^i(x)\cdot \nu_i(x)\big) |\partial_x \gamma_i(x,t)| \; dx + \frac{1}{m_{TJ}} \xi^1 (0)\cdot \eta^1(0).
\end{equation}
where $\xi^i \, , \, \eta^i \, : \Gamma_i(t) \to \mathbb{R}^2$ obey $\xi^i(0) = \xi^j(0)$ and $\eta^i(0)=\eta^j(0)$ for any $i\not=j$, and $\nu_i(x)$ denotes the unit normal to $\Gamma_i(t)$ at $x\in\Gamma_i$.
We note that, formally, in the limit $m_{TJ} \to \infty$, the pointwise term in (\ref{eq:metric}) drops out and one recovers the standard variational formulation of curvature motion subject to Herring angle conditions.

The geometric evolution problem \eqref{curvaturemotion} \& \eqref{drag} can then be described by a system of PDEs (initial-boundary value problem) in terms of the parametrizations $\gamma_i(x,t)$ of the curves $\Gamma_i(t)$ as follows:
\begin{equation}\label{eq:pde}
    \begin{aligned}
        \gamma_{it}(x,t) &= \sigma_i m_{GB}^{(i)} \frac{\gamma_{ixx}(x,t)}{|\gamma_{ix}(x,t)|^2}, & (x,t)\in[0,1]\times[0,T], \qquad i\in\{1,2,3\}\\
        \gamma_i(0,t) &= \gamma_j(0,t), &t\in[0,T], \quad i,j\in\{1,2,3\}\\
        \gamma_{1t}(0,t) &= m_{TJ}\sum_{j=1}^3 \sigma_j \frac{\gamma_{jx}(0,t)}{|\gamma_{jx}(0,t)|}, &t\in[0,T] \\
        \gamma_i(1,t) &\equiv \gamma_i^0(1), &t\in[0,T]\qquad i\in\{1,2,3\}\\
        \gamma_i(x,0) &= \gamma_i^0(x), &x\in[0,1]\qquad i\in\{1,2,3\}.
    \end{aligned}
\end{equation}
 Short time well-posedness of \eqref{eq:pde} was established in \cite{yangesedoglu25}, verifying that the triple junction drag condition \eqref{drag} is a valid boundary condition that uniquely determines the evolution of the network. One may immediately observe that while \eqref{curvaturemotion} only specifies the normal velocity of the curves, \eqref{eq:pde} further specifies the tangential velocity of the curves as well (this follows \cite{bronsardreitich}).
 It is easy to check that any solution of \eqref{eq:pde} is a solution of \eqref{curvaturemotion} \& \eqref{drag}. Conversely, it is proven in \cite{yangesedoglu25} that given any three sufficiently regular curves satisfying \eqref{curvaturemotion} \& \eqref{drag}, one can always find parametrizations for them that satisfy \eqref{eq:pde}. We note that there is an extensive literature on curvature motion of networks (e.g. \cite{bronsardreitich,mantegazzanovagatortorelli,pluda,irregularnetworks}) formulated as such, in terms of parametrized curves.

Before the formation of any singularities or topological changes in the network, such descriptions of the network evolution in terms of explicitly parametrized curves, beyond being of fundamental theoretical interest, are also physically relevant, and naturally suggest {\em front tracking} as a highly efficient and accurate numerical method.
However, in simulations of relevance to materials scientists, who are often keenly interested in the coarsening dynamics of the network as many grains shrink and disappear, topological changes are inevitable.
While the treatment of topological changes in a front tracking implementation might be feasible in two dimensions under very special circumstances (e.g. equal surface tensions and mobilities, which in the Herring angle case allows a classification of possible transitions in the network into a small number of cases, see e.g. \cite{mantegazzanovagatortorelli, irregularnetworks}), computational exploration of these models for a broad range of parameters, and certainly in three dimensions, strongly suggests implicit interface methods as the preferred numerical method. 
We believe this to be even more so in the presence of triple junction drag, which makes additional topological changes possible even under the simplest of conditions (i.e. two dimensions, equal surface tensions and mobilities); see \cite{yangesedoglu25}.

In this paper, we choose to focus on the phase-field approach as an implicit interface method for network dynamics \eqref{curvaturemotion} \& \eqref{drag} that incorporates junction drag, as there is already existing work in this direction \cite{johnsonvoorhees,miyoshi2025} that we build up on, and phase-field more broadly has a long track record in materials literature; other approaches e.g. threshold dynamics \cite{mbo92,esedoglu_otto} are an enticing possibility left for future investigation.
Since it is already well understood (as part of a very long history) how phase-field methods can be generalized to accommodate different surface tensions and mobilities, we focus on junction drag as the main novelty and thus take $\sigma_i = m_{GB}^{(i)} = 1$ for all $i$ for the rest of the paper.

\section{Previous work}
\label{sec:previouswork}

In \cite{johnsonvoorhees}, the authors propose a phase-field method for triple junction drag which in the case of three grains can be described by the order parameters $\eta_i:\Omega \to \R^3$, $i=1,2,3$ that solve the following system of PDEs:
\begin{align}\label{voorheespde}
    \frac{\partial \eta}{\partial t} = L(\eta)(\Delta \eta - \nabla f_0(\eta)),
\end{align}
where $\eta = (\eta_1,\eta_2,\eta_3)$ and
\begin{align}
    f_0(\eta) = \mu\bigg(\sum_{i=1}^3 \big( -\frac{\alpha}{2}\eta_i^2 + \frac{\beta}{4}\eta_i^4\big) + \gamma\sum_{i=1}^3 \sum_{j\neq i}\eta_i^2 \eta_j^2  \bigg).
\end{align}
The effect of triple junction drag is introduced via the mobility factor 
\begin{align}
\label{eq:voorheesmobility}
    L(\eta) \coloneqq L_{GB} - \exp\bigg(-\Phi_{width}\big(\eta_1+\eta_2+\eta_3 - \Phi_{min}\big)^2\bigg)(L_{GB} - L_{TJ}).
\end{align}
where the parameters $L_{GB}$ and $L_{TJ}$ are to be chosen to induce the grain boundary and triple junction mobilities desired.
The expression \eqref{eq:voorheesmobility} interpolates between these two types of mobilities, based on location, 
the sum of the order parameters, i.e. $\eta_1 + \eta_2 + \eta_3$, serving as an indicator (detector) of triple junction locations.
Indeed, the authors observe that
\begin{align*}
    \eta_1(x) + \eta_2(x) + \eta_3(x) \approx
    \begin{cases}
        1.0, &\text{ inside grains }\\
        1.0, &\text{ along grain boundaries }\\
        1.08, &\text{ at triple junctions}
    \end{cases}
\end{align*}
for the particular potential $f_0$ they have chosen, allowing this expression to distinguish between junction locations and elsewhere; see Figure \ref{fig:voorhees_mobility}.

To discover the relation between the parameters $L_{GB}$ and $L_{TJ}$ and the mobilities $m_{GB}$ and $m_{TJ}$ that they induce, the authors adopt a data driven approach.
Relying on a class of exact, traveling wave solutions (an extension of the well-known "grim-reaper" solutions in the Herring angle case) obtained in \cite{GOTTSTEIN2002703} for the sharp interface PDE system \eqref{curvaturemotion} \& \eqref{drag}, the authors are able to determine this correspondence by matching simulation results using (\ref{voorheespde}) \& (\ref{eq:voorheesmobility}) to the exact solutions.

\begin{figure}[H]
    \centering    \includegraphics[width=.6\linewidth]{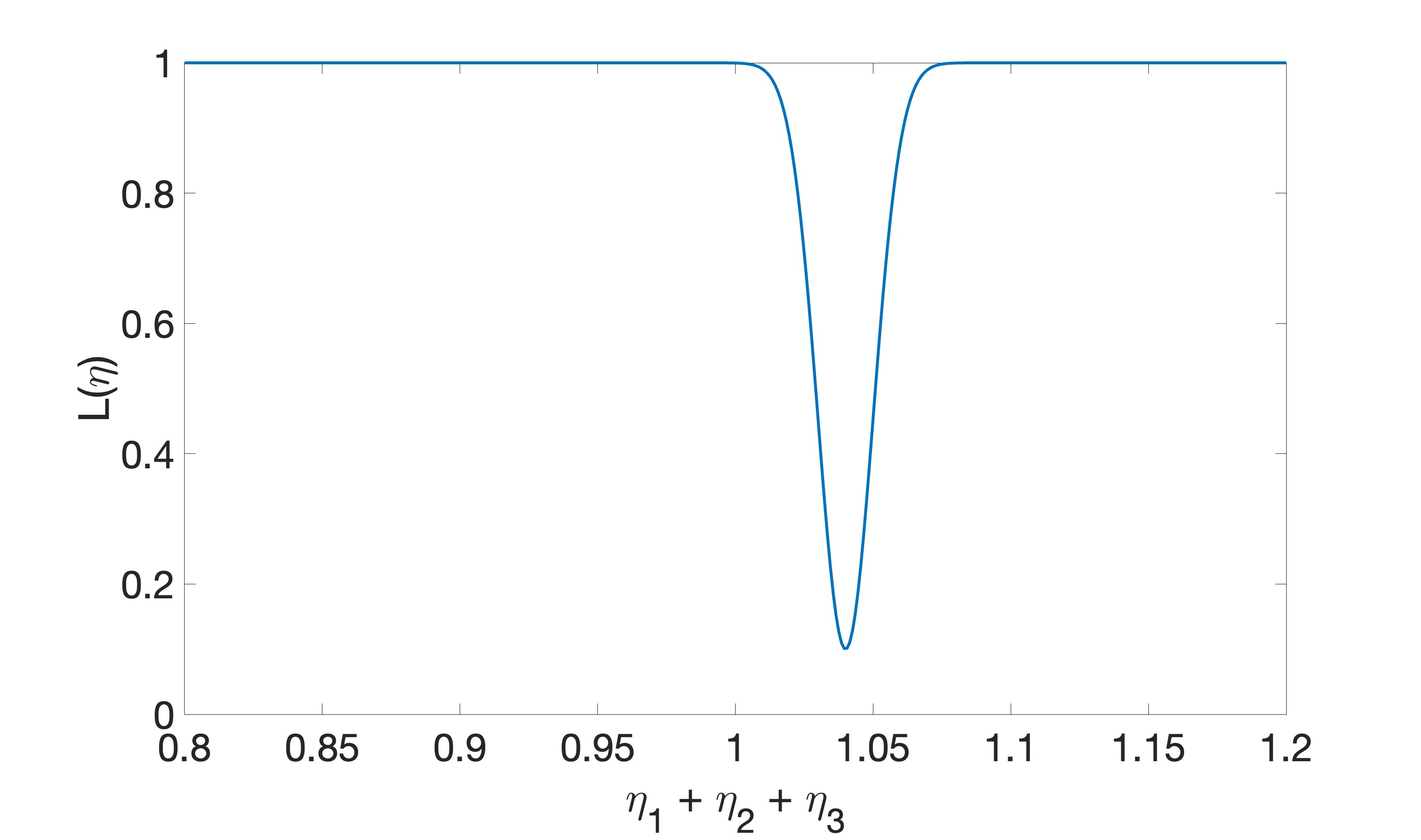}%
        \caption{Triple junction detector (indicator) used in \cite{johnsonvoorhees}.}
        \label{fig:voorhees_mobility}
\end{figure}

\noindent Approximating (\ref{curvaturemotion}) \& (\ref{drag}) with this approach requires making several important assumptions:
\begin{enumerate}
\item The effective mobility induced on the triple junction by factor (\ref{eq:voorheesmobility}) is assumed to be {\em independent} of the the configuration of angles formed at, i.e. the local geometry of, the triple junction.
Note that under the evolution (\ref{curvaturemotion}) \& (\ref{drag}), angles formed at the junction are generically time dependent (they cannot be prescribed or assumed constant).
\item The effective mobility induced by (\ref{eq:voorheesmobility}) is also assumed to be independent of the {\em direction of motion} of the triple junction.
For example, in order to converge to the desired evolution (\ref{curvaturemotion}) \& (\ref{drag}), the factor (\ref{eq:voorheesmobility}) needs to retard the motion of the junction approximately equally in every direction, regardless of whether that direction is aligned or not with the unit normal to one of the interfaces meeting at the junction.
\item The sum of order parameters $\eta_1(x) + \eta_2(x) + \eta_3(x)$ used as a "junction detector" is assumed to be independent of the local geometry of the triple junction.
\item The mobility factor (\ref{eq:voorheesmobility}) is assumed not to require any explicit dependence on the interface thickness.
\end{enumerate}

Our numerical experiments indicate these assumptions may not be valid.
In particular, we repeated the experiment in \cite{johnsonvoorhees} with the same choice of parameters ($L_{TJ} = 0.1$, $L_{GB}=1$, $\Phi_{width}=5000$, $\Phi_{min}=1.04$) but on a {\em different exact solution} and observed a different $m_{TJ}$ value.
We used the configuration in Figure 4 of  \cite{GOTTSTEIN2002703} instead (see Figure \ref{fig:voorhees}).

\begin{figure}[H]
        \subfloat[n=128]{%
            \includegraphics[width=.5\linewidth,
    trim=10cm 0cm 10cm 0cm,
    clip]{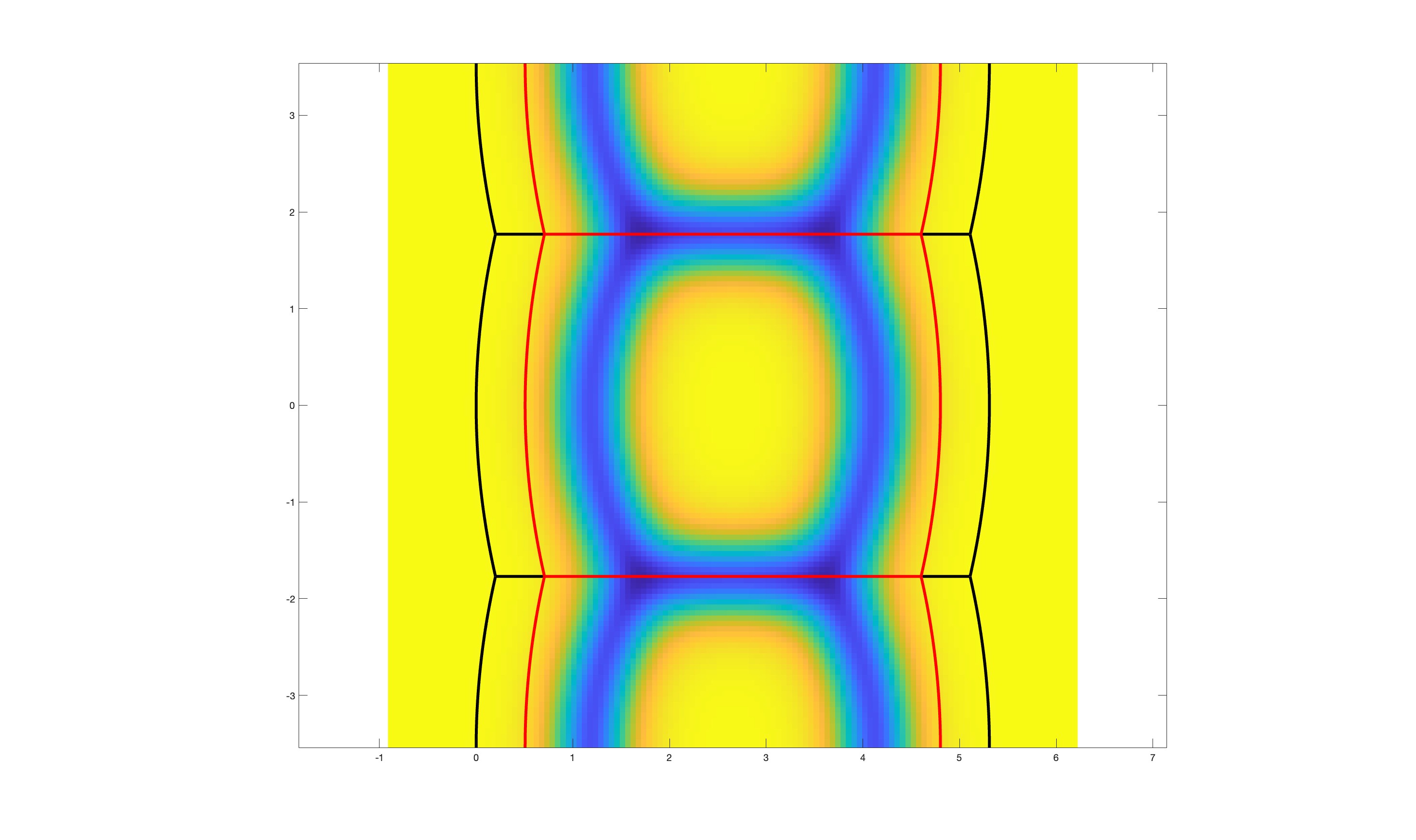}%
            \label{subfig:voorhees128}%
        }\hfill
        \subfloat[n=256]{%
            \includegraphics[width=.5\linewidth,
    trim=10cm 0cm 10cm 0cm,
    clip]{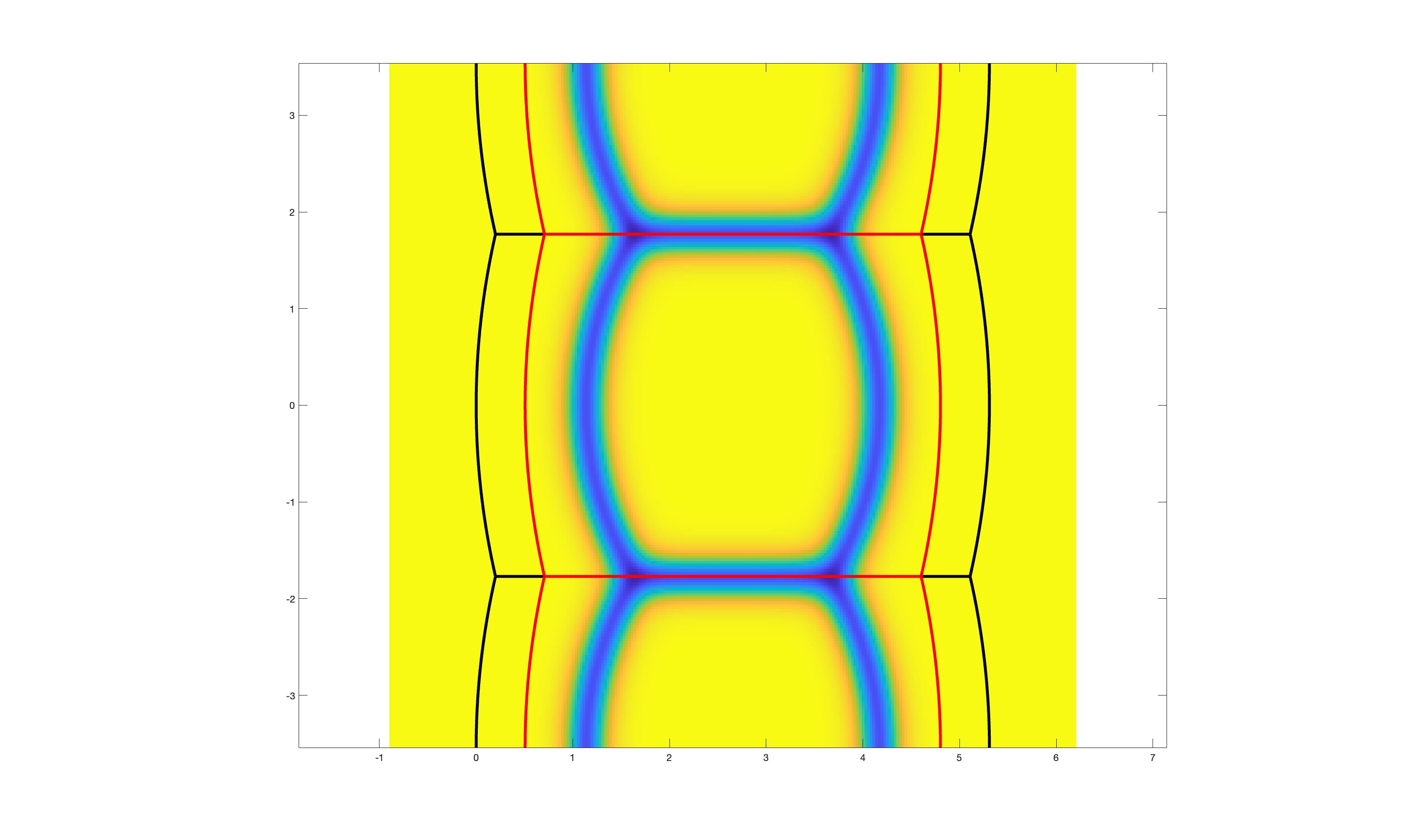}%
            \label{subfig:voorhees256}%
        }\\
        \subfloat[n=512]{%
            \includegraphics[width=.5\linewidth,
    trim=10cm 0cm 10cm 0cm,
    clip]{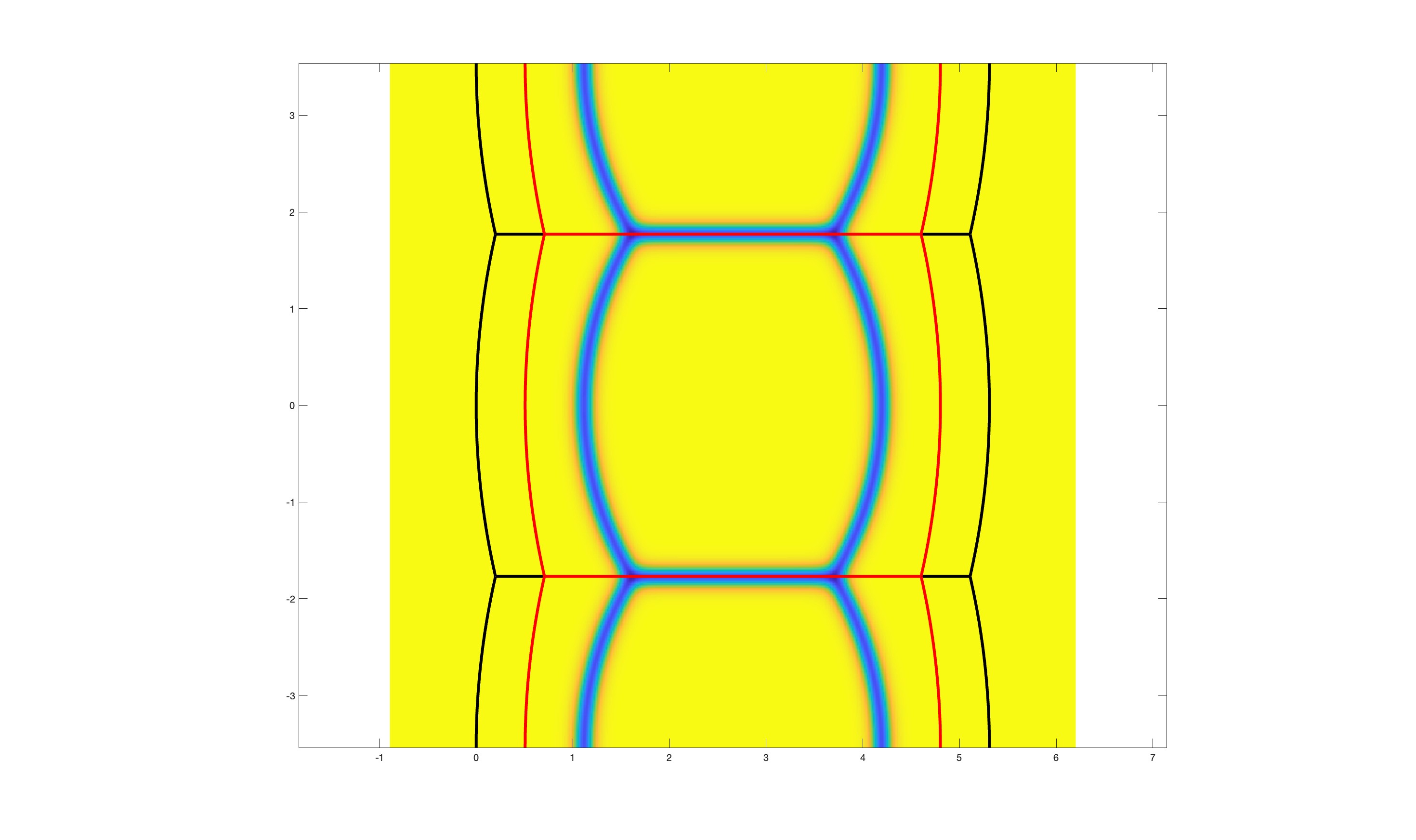}%
            \label{subfig:voorhees512}%
        }\hfill
        \subfloat[n=1024]{%
            \includegraphics[width=.5\linewidth,
    trim=10cm 0cm 10cm 0cm,
    clip]{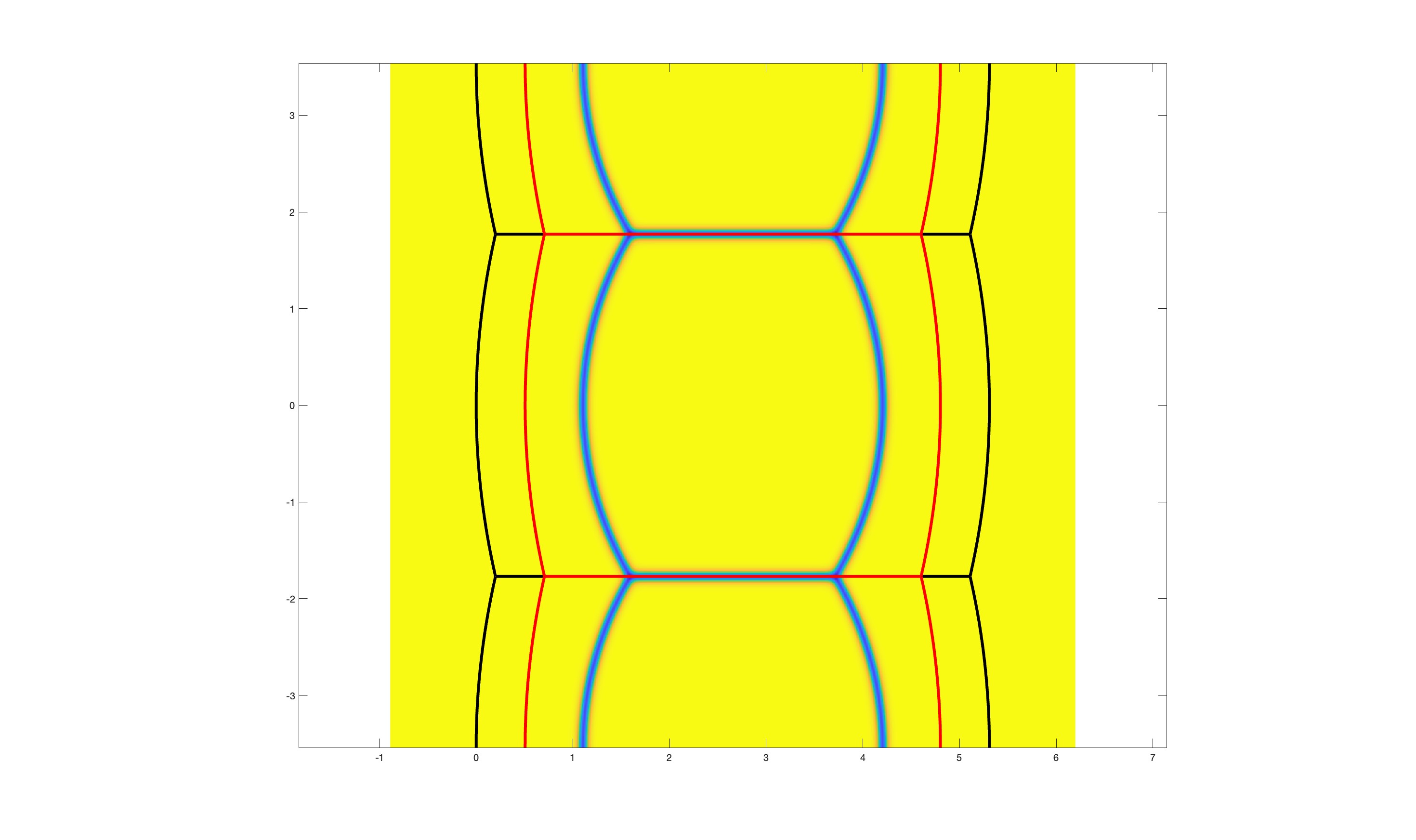}%
            \label{subfig:voorhees1024}%
        }
        \caption{Black curve: Initial condition for an exact, traveling wave solution of (\ref{curvaturemotion}) \& (\ref{drag}). Red curve: The exact solution at final time. The diffuse interface model (\ref{voorheespde}) \& (\ref{eq:voorheesmobility}) of \cite{johnsonvoorhees} appears to converge, but to a different evolution where the mobility of the junction is substantially greater than anticipated.}
        \label{fig:voorhees}
\end{figure}

We compute the error between the phase-field solution and the exact solution by comparing the average area of the symmetric differences of the grains (see \eqref{eq:avg_error} \& \eqref{eq:L1error} for a more detailed explanation of how the error is computed). Table \ref{tab:table_voorhees} and Figure \ref{fig:error_voorhees} show that the error does not converge to zero as the spatial resolution $\delta x$ and the width of the diffuse interface are simultaneously refined.

\begin{center}
\begin{minipage}[t]{0.54\textwidth}
    \centering
    \vspace{30pt}
    \def\arraystretch{1.5}
    \small
    \resizebox{\linewidth}{!}{
        \begin{tabular}{|l|l|l|l|l|}
        \hline
        $\delta x$ &  $\delta t$            & Error  & Order \\ \hline
        7.078/(128-1) = 0.0557      &  $6.21 \times 10^{-4}$ & 9.77  & -     \\ \hline
        7.078/(256-1) = 0.0278      &  $1.54 \times 10^{-4}$ & 7.66  & 0.35 \\ \hline
        7.078/(512-1) = 0.0139      &  $3.83 \times 10^{-5}$ & 7.15  & 0.10  \\ \hline
        7.078/(1024-1) = 0.0069     &  $9.57\times 10^{-6}$  & 6.80 & 0.07 \\ \hline
        \end{tabular}
        }
        \vspace{20pt}
        \captionof{table}{Stagnation of error and lack of convergence to the exact solution shown in Figure \ref{fig:voorhees}  of method (\ref{voorheespde}) \& (\ref{eq:voorheesmobility}). }
    \label{tab:table_voorhees}
    \end{minipage}
    \hfill
    \begin{minipage}[t]{0.44\textwidth}
        \centering
        % \vspace{0pt}
        \begin{figure}[H]
            \centering
            \includegraphics[width=1\linewidth]{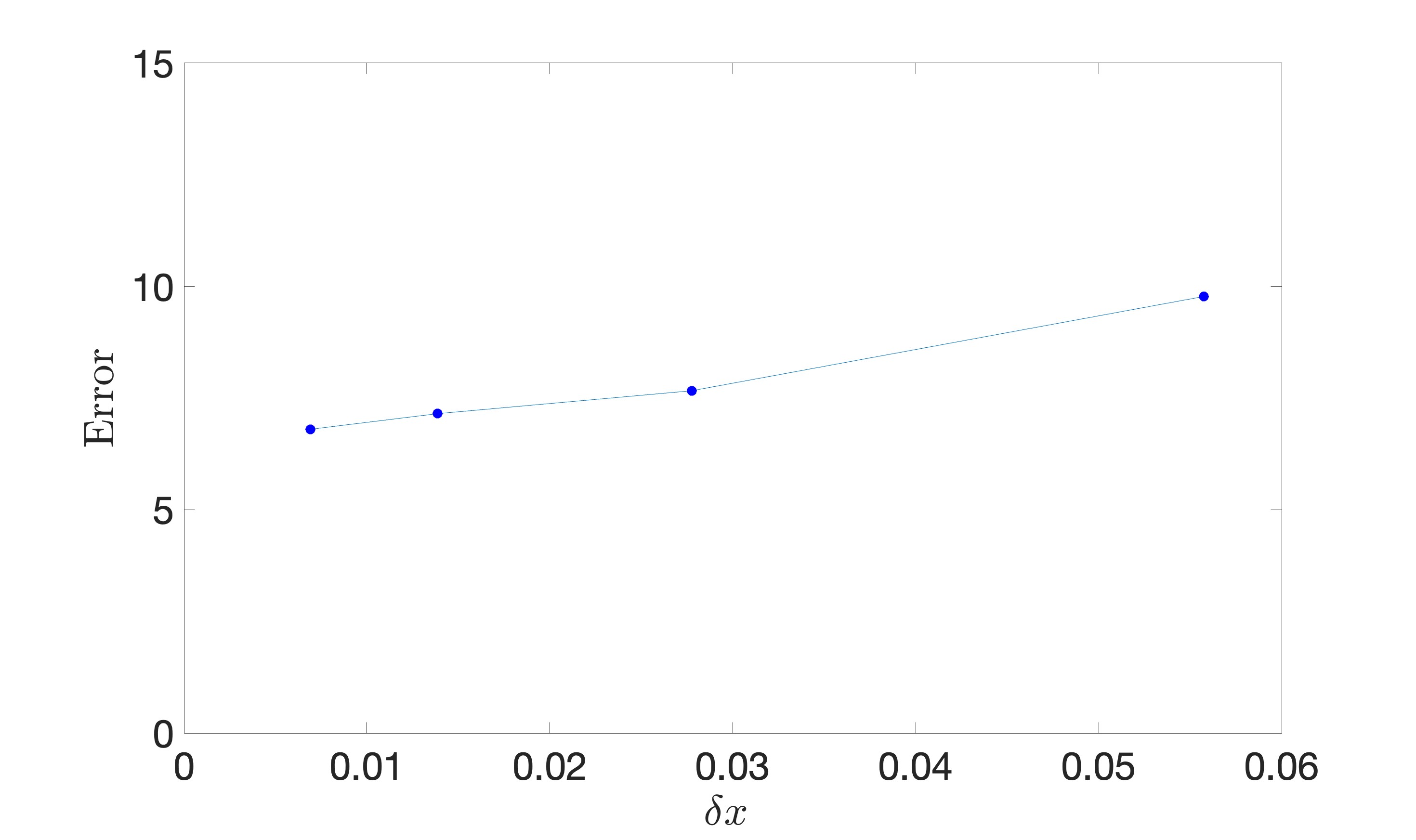}%
            \caption{Plot of error against $\delta x$ for method (\ref{voorheespde}) \& (\ref{eq:voorheesmobility}) on the test shown in Figure \ref{fig:voorhees}.}
            \label{fig:error_voorhees}
        \end{figure}
    \end{minipage}
\end{center}

\begin{figure}[H]
    \centering
        \includegraphics[width=.6\linewidth]{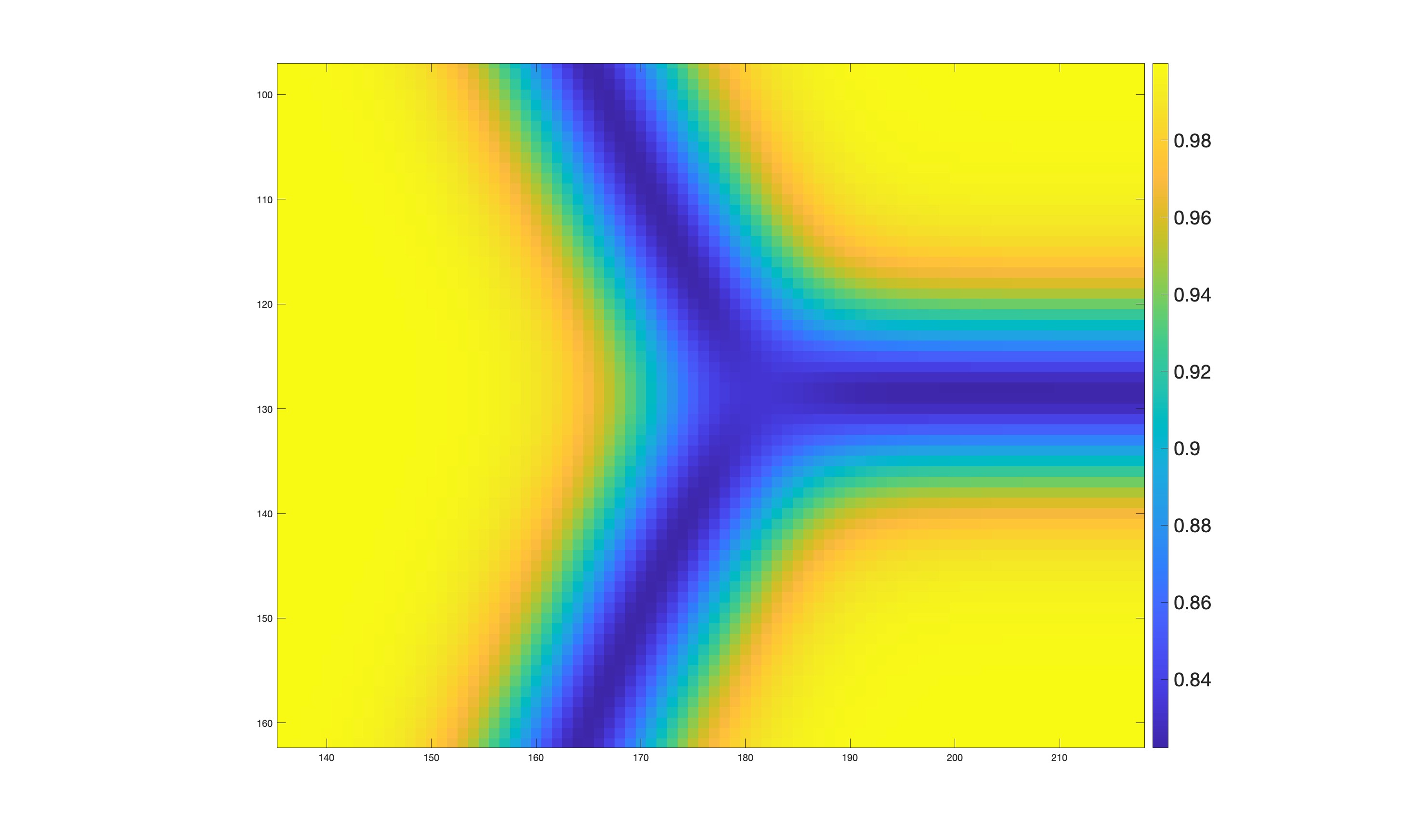}%
        \caption{Colormap of $\eta_1 + \eta_2 + \eta_3$ used in \cite{johnsonvoorhees} as a junction indicator, near a triple junction}
        \label{fig:voorhees_triple_junction}
\end{figure}

In particular, we find that at the triple junction of this exact solution, $\eta_1(x) + \eta_2(x) + \eta_3(x) \approx 0.83$ (see Figure \ref{fig:voorhees_triple_junction}), resulting in $L(\eta)\approx 1$ throughout the entire domain, which then implies the mobility of this junction is far greater than the anticipated value $0.228$.
The point is that with the approach of \cite{johnsonvoorhees}, {\em different triple junctions get assigned different mobilities}.
In Figure \ref{fig:voorhees}, the black and red curves represent the initial and final configuration of an exact traveling wave solution to \eqref{curvaturemotion} \& \eqref{drag}, given by \eqref{translating_solution}, \eqref{angle} \& \eqref{exact_velocity} with parameters $m_{TJ} = 0.228$ and $x_0 = 0.2$. The curves are plotted over a colormap of the function $|\eta(x,T)|^2 = \sum_{i=1}^3 \eta_i(x,T)^2$, where $\eta(x,t)$ was numerically computed using an explicit Euler discretization of \eqref{voorheespde}. The yellow regions represent the grains, demarcated by a thin diffuse interface (blue). Figure \ref{fig:voorhees} demonstrates that the choice of parameters ($L_{TJ} = 0.1$, $L_{GB}=1$, $\Phi_{width}=5000$, $\Phi_{min}=1.04$) very likely induced a triple junction mobility that is significantly higher than $0.228$.

In the more recent work \cite{miyoshi2025}, the authors propose another phase-field method for triple junction drag by incorporating an \emph{order-parameter dependent scalar mobility factor} into the multi-phase-field model of \cite{steinbach}. A similarity between \cite{miyoshi2025} and \cite{johnsonvoorhees} is that both models use the order parameters to detect the locations of the triple junctions and then assign a low mobility value to the vicinity of the triple junctions. \cite{miyoshi2025} also adopts a data-driven approach to estimate the relationship between the assigned mobility value and the observed (physical) triple junction mobility value.
However, \cite{miyoshi2025} does not investigate whether this phase-field method induces the \emph{same} physical triple junction mobility value for all possible configurations of triple junctions and all directions of their motion.

The central idea shared by \cite{johnsonvoorhees,miyoshi2025} of a mobility factor that assigns a lower value in the vicinity of triple junctions is natural, and forms the inspiration for the approach in this study. 
However, designing this mobility factor to induce a desired mobility at all triple junctions is nontrivial since, as explained above, an infinitude of junction configurations can arise in model (\ref{curvaturemotion}) \& (\ref{drag}) that all need to be retarded equally.

\section{New phase-field approximation for triple junction drag}
\label{sec:ourmodel}
Inspired by \cite{johnsonvoorhees}, our goal is to develop a phase-field approximation to \eqref{curvaturemotion} \& \eqref{drag} that can effectively slow down the motion of triple junctions. The challenge is to design a mobility factor $M$ so that the motion of any triple junction is retarded by a factor that is {\em independent} of the junction profile (i.e. the junction angles, which may change in time) as well as the direction of its motion (in relation to the interfaces forming the junction). 

To explain our proposed phase-field method in the simplest setting, let us assume that there are $N=3$ phases in our spatial domain that is a square $\Omega = [-L,L]^2$ with periodic boundary conditions. The evolution of our system will be described, in the style of Baldo \cite{BALDO199067}, by a vectorial order parameter $u : \Omega \times [0,T] \to \R^2$. Let $W:\R^2 \to \R$ be a triple-well potential defined by
\begin{align}\label{triplewellpotential}
    W(x) \coloneqq C_\alpha |x - \alpha_1|^2 |x - \alpha_2|^2 |x - \alpha_3|^2
\end{align}
where $\alpha = (\alpha_1,\alpha_2,\alpha_3)$ are the three wells. To ensure symmetry, we choose the wells to be the cube roots of unity, i.e. $\alpha_1 = (1,0)$, $\alpha_2 = (-1/2 , \sqrt{3}/2)$, $\alpha_3 = (-1/2 , -\sqrt{3}/2)$. The normalization constant $C_\alpha \approx 1.837^{-2}$ ensures that the \emph{geodesic distance} \cite{BALDO199067} between each pair of wells (and hence the surface tension between each pair of neighboring grains) is equal to one.
This geodesic distance is defined for $\alpha,\beta\in\R^2$ by
\begin{align}
\label{eq:distance}
    d(\alpha,\beta)\coloneqq \inf_{c}\bigg\{ \int_0^1 \sqrt{2W(c(s))}|\dot{c}(s)|\;ds \;:\; c\in C^1([0,1];\R^2),\;c(0)=\alpha,\;c(1) = \beta \bigg\}.
\end{align}
The surface tension coefficients $\sigma_{ij}$ are given by the geodesic distances between the wells of $W$:
\begin{align}
    \sigma_{ij} = d(\alpha_i,\alpha_j).
\end{align}
We remark that for certain choices of potential $W$ (such as \eqref{triplewellpotential}), the geodesics between each pair of the three wells and hence the $\sigma_{ij}$ can be explicitly determined \cite{ALIKAKOS_BETELU_CHEN_2006}.
For other choices of potential $W$ where no explicit formulas are available, one can compute the geodesic distances numerically.

Finally, our proposed phase-field method for curvature motion with triple junction drag is given by the PDE system
\begin{equation}\label{eq:dragphasefield}
    u_t(x,t) = M(\nabla u(x,t))\bigg( \Delta u (x,t) - \frac{1}{\eps^2}\nabla W(u(x,t)) \bigg)
\end{equation}
where
\begin{equation}\label{eq:mobility}
    M(\nabla u) \coloneqq I_{2\times 2} - \frac{1}{A_W}J_u\nabla u \bigg[\eps m_{TJ}\bigg(\nabla u^\top \nabla u + I_{2\times 2} \bigg)^2 + \frac{1}{A_W}J_u \nabla u^\top \nabla u \bigg]^{-1}\nabla u^\top 
\end{equation}
is a matrix-valued mobility factor. Here, $J_u$ is the Jacobian determinant
\begin{equation}
\label{eq:jacobian}
    J_u = \sqrt{\det(\nabla u(x,t)^\top \nabla u(x,t))}.
\end{equation}
The constant $A_W \approx 0.86$ is completely determined by the choice of potential $W$, and will be defined and explained in the next section.

We can use the Woodbury formula to rewrite the mobility factor \eqref{eq:mobility} as
\begin{equation}\label{eq:woodbury}
    M(\nabla u)^{-1} = I_{2\times 2} + \frac{1}{\eps m_{TJ}}\frac{J_u}{A_W}\nabla u \bigg( \nabla u^\top \nabla u + I_{2\times 2} \bigg)^{-2}\nabla u^\top.
\end{equation}
which makes it apparent that the symmetric matrix $M(\nabla u)$ is positive definite.
Therefore, our method (i.e. \eqref{eq:dragphasefield} \& \eqref{eq:mobility}) satisfies the energy dissipation identity
\begin{equation}\label{eq:energydissipation}
    \frac{d}{dt} E_\eps (u) = -\eps \int_\Omega \bigg(\Delta u - \frac{1}{\eps^2}\nabla W(u)\bigg)^\top M(\nabla u) \bigg(\Delta u - \frac{1}{\eps^2}\nabla W(u)\bigg)\;dx \leq 0
\end{equation}
where the energy $E_\varepsilon$ is the well-known Modica-Mortola \cite{modica_mortola} approximation to perimeter:
\begin{align}
\label{eq:mm}
    E_\eps(u)\coloneqq \int_{\Omega}\frac{\eps}{2}|\nabla u(x)|^2 + \frac{1}{\eps}W(u(x))\;dx, \qquad u\in H^1(\Omega;\R^2).
\end{align}

We highlight two main differences between our mobility factor \eqref{eq:mobility} and those of \cite{johnsonvoorhees,miyoshi2025}.
First, \eqref{eq:mobility} depends on $\nabla u(x,t)$ vs. $u(x,t)$.
Second, \eqref{eq:mobility} is matrix-valued, whereas those of \cite{johnsonvoorhees,miyoshi2025} are scalar-valued.
These differences are essential in enabling our method to achieve the correct triple junction dynamics for \emph{all} triple junction configurations.

\section{The Jacobian determinant}
\label{sec:jacobian}
A distinctive feature of our method \eqref{eq:dragphasefield} \& \eqref{eq:mobility} is its reliance on the Jacobian determinant $J_u$ (see \eqref{eq:jacobian}) of the order parameter $u(x,t)$. The purpose of this section is to explain its geometric meaning and the role that it plays in influencing triple junction dynamics. 

\begin{figure}[H]
            \includegraphics[width=.5\linewidth,
    trim=10cm 0cm 10cm 0cm,
    clip]{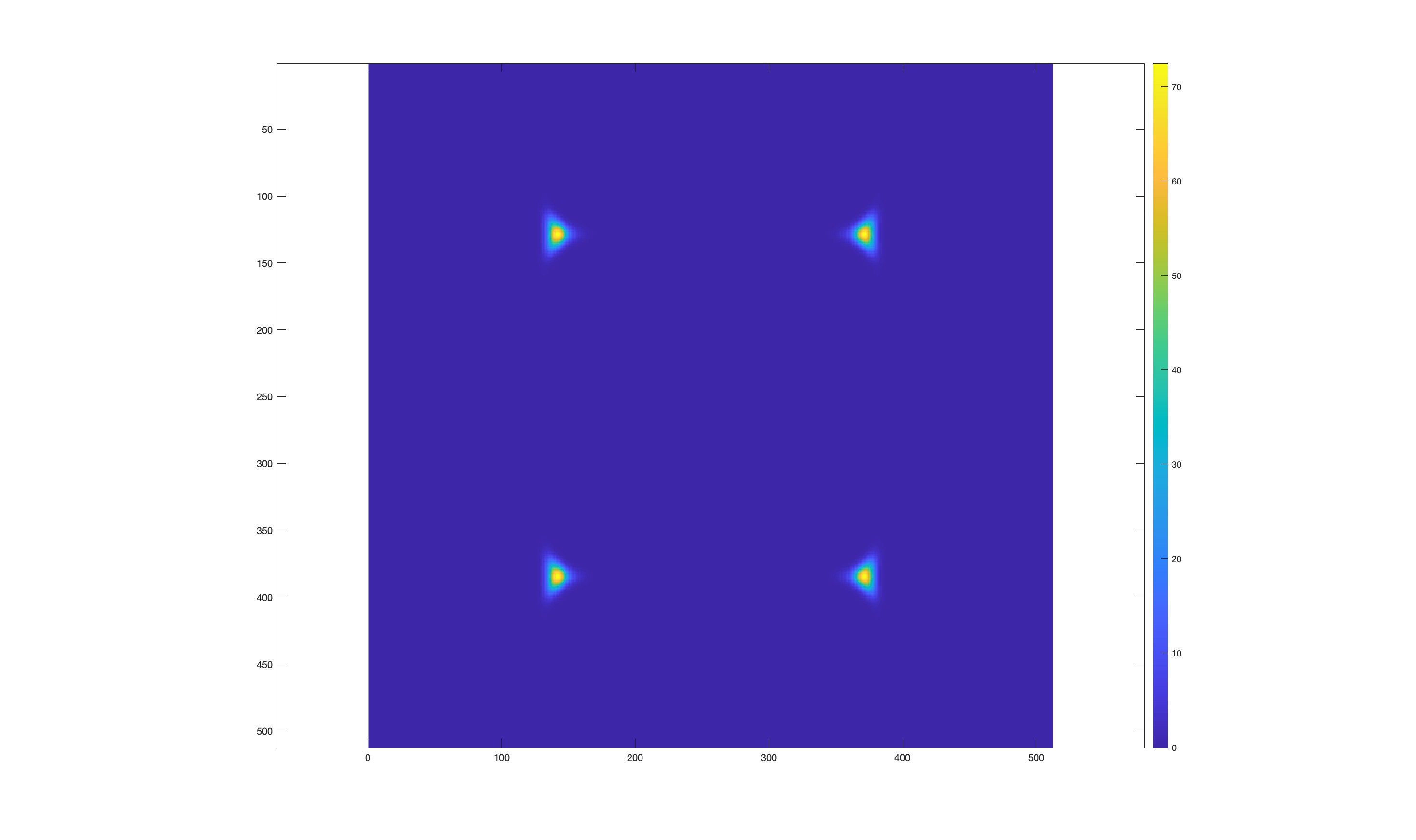}%
            \label{subfig:a}%
            \includegraphics[width=.5\linewidth,
    trim=10cm 0cm 10cm 0cm,
    clip]{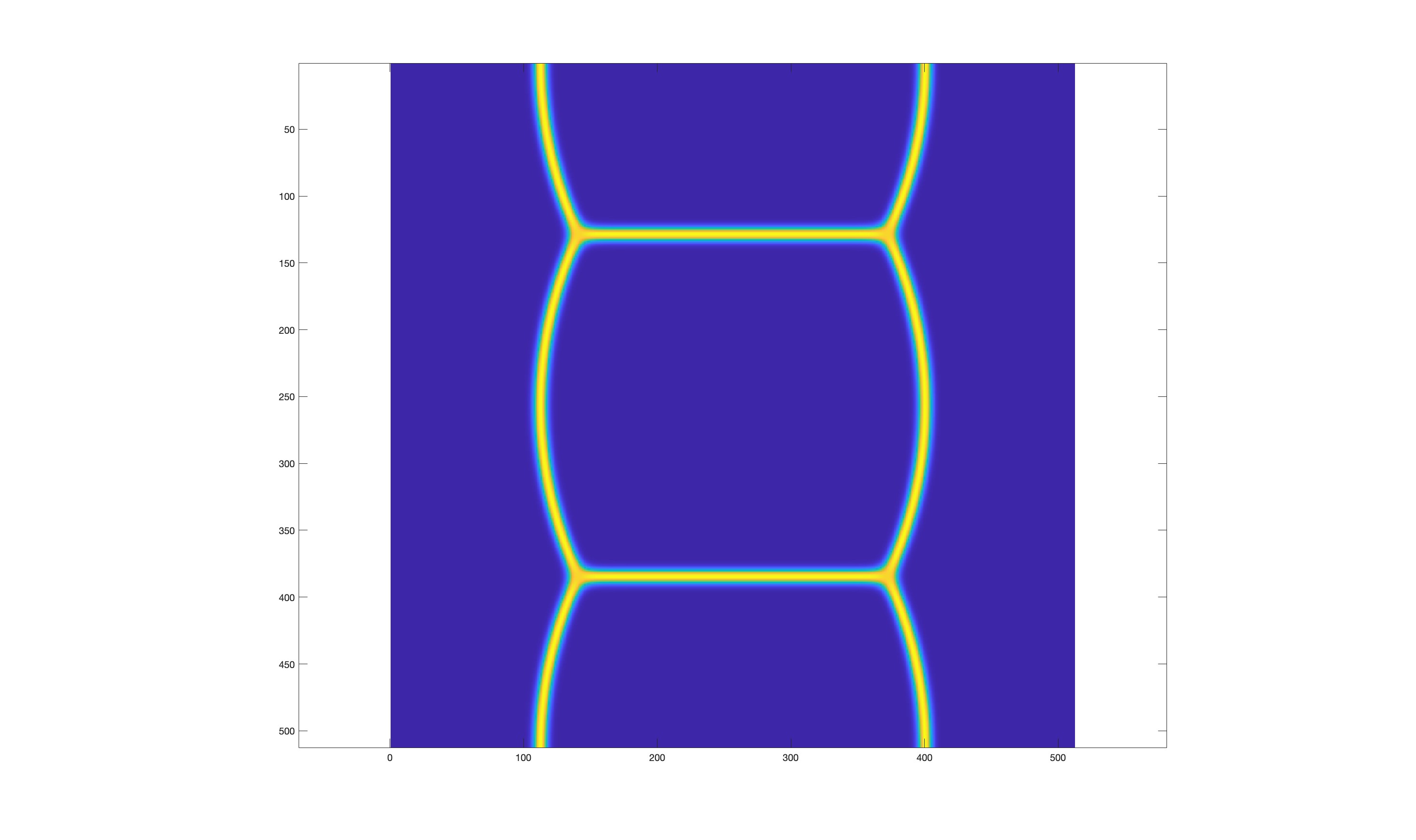}%
            \label{subfig:b}%
        %}
        \caption{The Jacobian $J_u$ given in \eqref{eq:jacobian} concentrates near triple junctions, as shown in the left panel. Corresponding diffuse grain boundaries are shown in the right panel. }
        \label{fig:jacobian}
\end{figure}

To fix ideas, let $u(x,t)$ be a solution of the usual vectorial Allen-Cahn equation (with constant mobility)
\begin{align}
   u_t = \Delta u - \frac{1}{\eps^2}\nabla W(u).
\end{align}
Assume that $u$ is of `triple-junction type': it divides the domain $\Omega$ into three regions in each of which $u \approx \alpha_1$, $\alpha_2$ or $\alpha_3$, and the (diffuse) grain boundaries meet at a small neighborhood $B_\eps$ of the triple junction.
Away from the triple junction, the diffuse layer quickly settles into an optimal one-dimensional profile as the solution transitions from $u\approx \alpha_i$ to $u\approx \alpha_j$, with $i\not= j$
This optimal profile is given by a particular parametrization of the geodesic connecting $\alpha_i$ to $\alpha_j$ (\cite{bronsardreitich} Lemma 1).
Hence, roughly speaking, $u(\cdot,t):\Omega \to \R^2$ maps the interior of each grain to the corresponding well $\alpha_i$, and the diffuse grain boundaries (away from the junction) to the geodesics between each pair of wells $\alpha_i$ and $\alpha_j$.
Therefore, we expect $J_u$ to be concentrated in a neighborhood $B_\eps$ of triple junctions just like in Figure \ref{fig:jacobian}, and $u(\cdot,t)$ to map $\Omega$ onto the \textit{geodesic triangle} $\mathcal{T}\subset\R^2$, which is the region bounded by the geodesics connecting the wells $\alpha_i$, $i=1,2,3$ (see Figure \ref{fig:geodesics}).
\begin{figure}[H]
    \centering        \includegraphics[width=.7\linewidth]{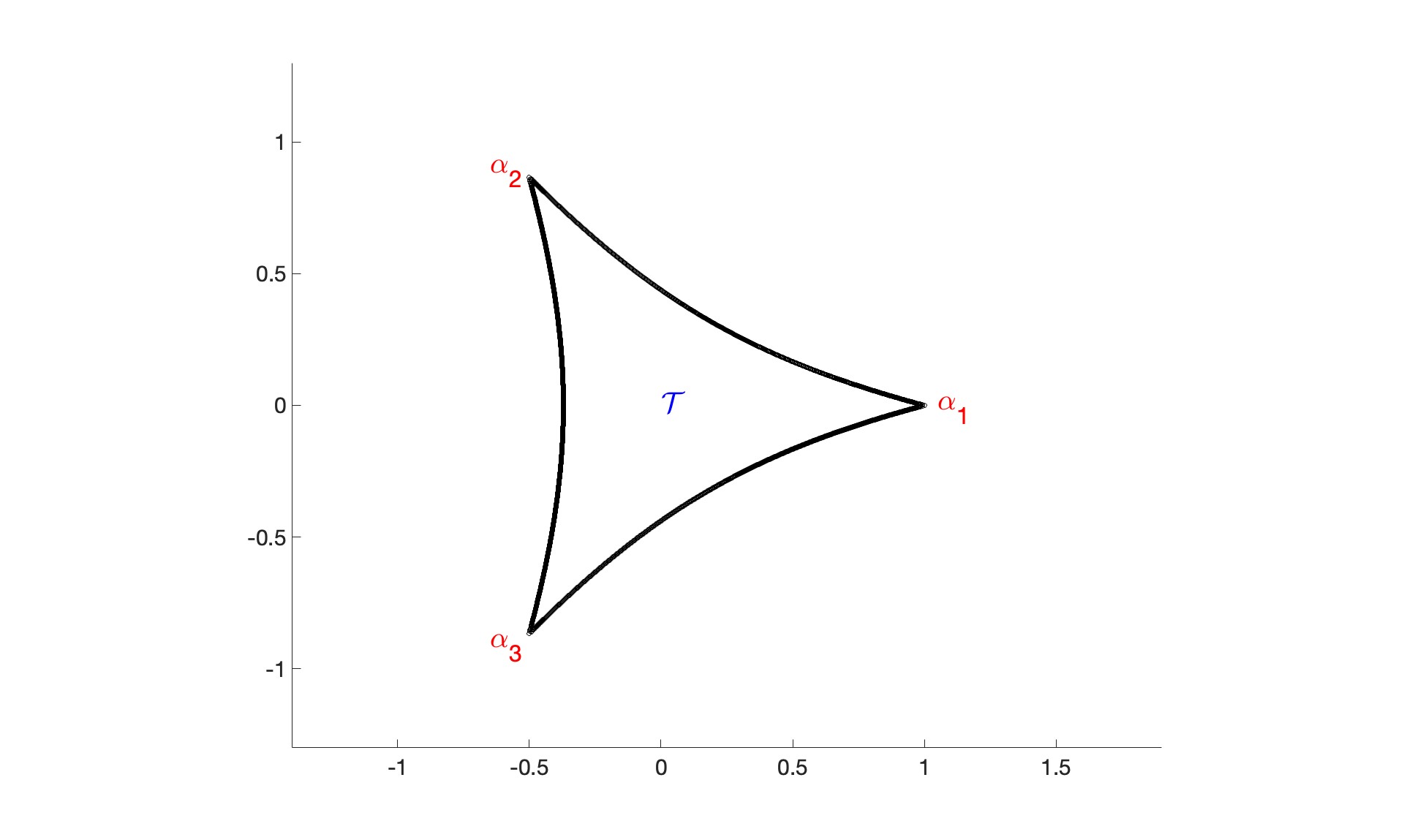}%
        \caption{Geodesics between three wells $\alpha_1$, $\alpha_2$, $\alpha_3 \in \R^2$ and the enclosed geodesic triangle $\mathcal{T}$.}
        \label{fig:geodesics}
\end{figure}

Moreover, the change-of-variables formula tells us that
\begin{equation}\label{eq:changeofvar}
\int_{B_\eps} J_u (x) \;dx \approx \mbox{Area} \big( u(B_\eps,t) \big) \approx \mbox{Area}(\mathcal{T}) \eqqcolon A_{W},
\end{equation}
which defines the constant $A_W$ solely in terms of the potential $W$ as promised, and as a byproduct gives us a nifty new formula for the total number of junctions $N_{TJ}(t)$: At any time $t>0$ during the evolution when there are no critical events occurring (e.g. a collision among triple junctions), we can compute the number of triple junctions $N_{TJ}(t)$ by
\begin{align}
\label{eq:ntj}
    \# \text{Triple junctions} := N_{TJ}(t) \approx \frac{1}{A_{W}}\int_\Omega J_u(x) \;dx
\end{align}
(see Figure \ref{fig:jacobianherring}). Therefore, the function $\frac{1}{A_{W}} J_u$ acts as a normalized (unit mass) bump (approximate delta) function at each triple junction and can be readily used to locate and track them during the evolution (see Figures \ref{fig:herring1},\ref{fig:herring2} \& \ref{fig:herring3}).
Furthermore, the topological nature of the argument given above justifying \eqref{eq:changeofvar}, as well as its reliance only on the potential $W$, also makes it very robust: As we will see, it holds even for Allen-Cahn systems of the form \eqref{eq:dragphasefield} with nontrivial mobilities, where the local {\em geometry} of (i.e. the angles at) a triple junction can vary in time -- a feature essential for our derivation of the new method \eqref{eq:dragphasefield} \& \eqref{eq:mobility} below.

\begin{figure}[H]
    \centering        \includegraphics[width=.9\linewidth]{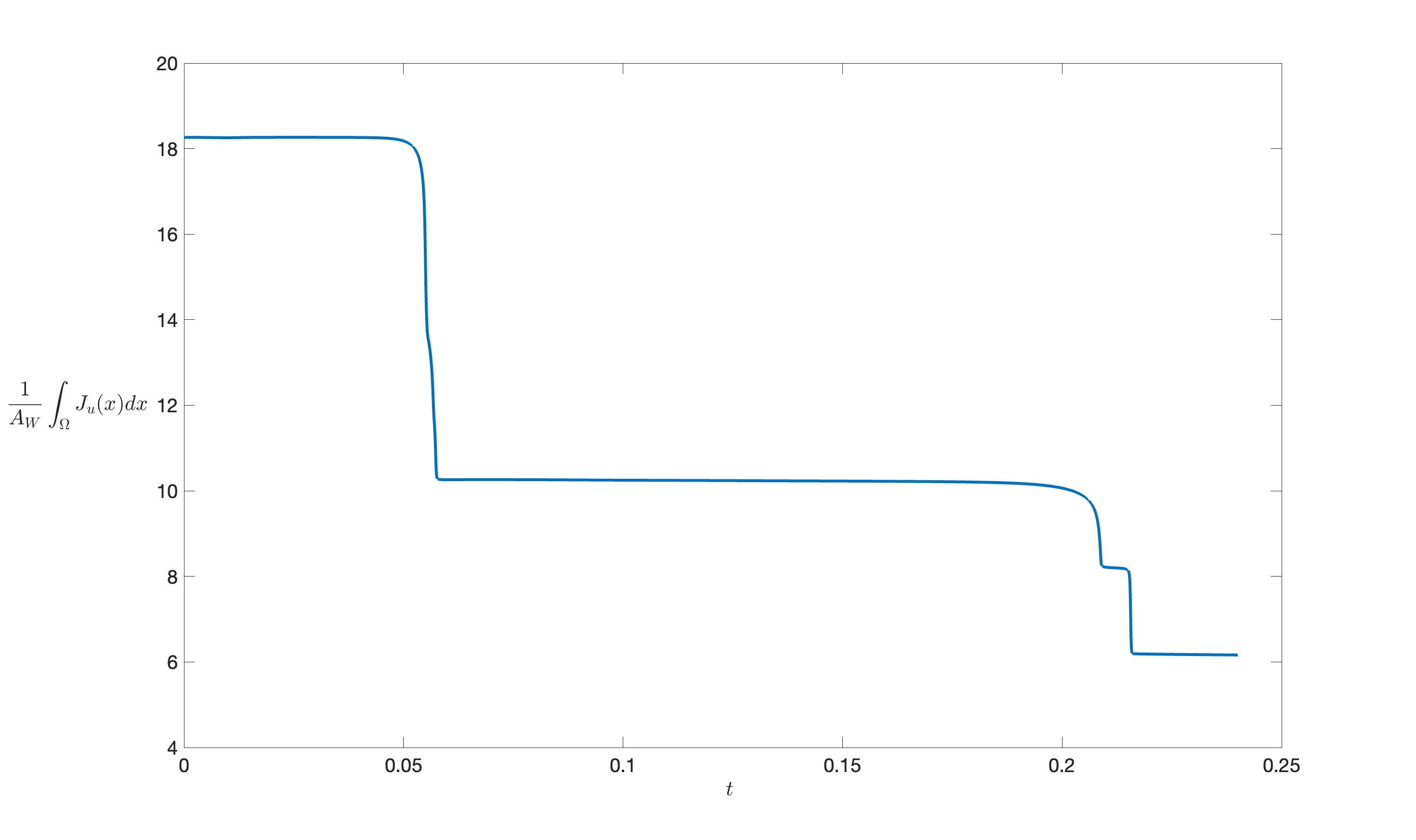}%
        \caption{A plot of the quantity $\frac{1}{A_{W}}\int_\Omega J_u(x) \;dx$ against time $t$. Here, $u(x,t)$ is a solution to the standard vectorial Allen-Cahn equation with constant mobility and symmetric potential, so that the usual Herring angle condition is induced at all triple junctions.}
        \label{fig:jacobianherring}
\end{figure}

\begin{figure}[H]
    \centering        \includegraphics[width=.9\linewidth]{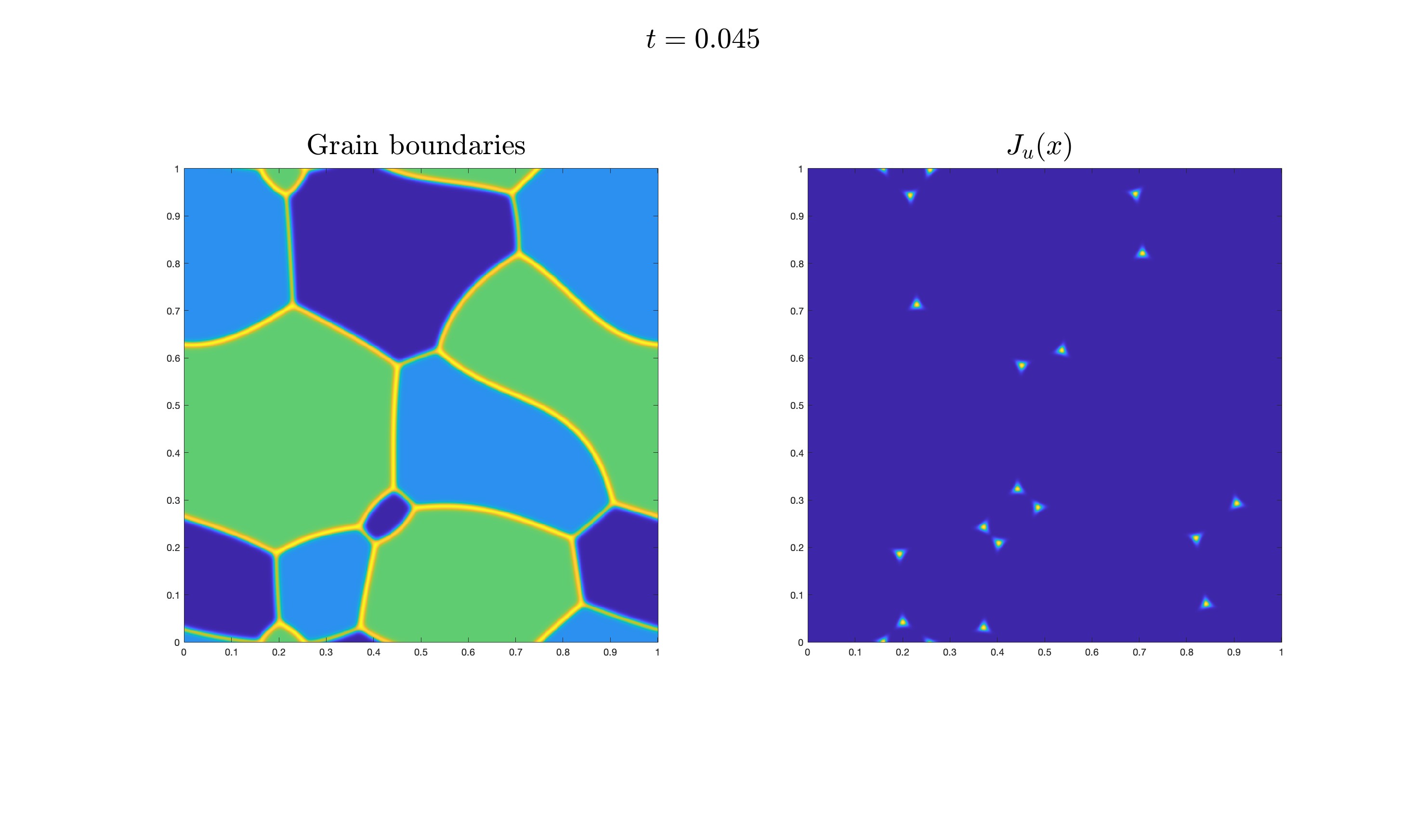}%
        \caption{Evolution of grain boundaries (3 phases) at time $t=0.045$. Each phase is represented by a different color and there are 18 triple junctions.}
        \label{fig:herring1}
\end{figure}

\begin{figure}[H]
    \centering        \includegraphics[width=.9\linewidth]{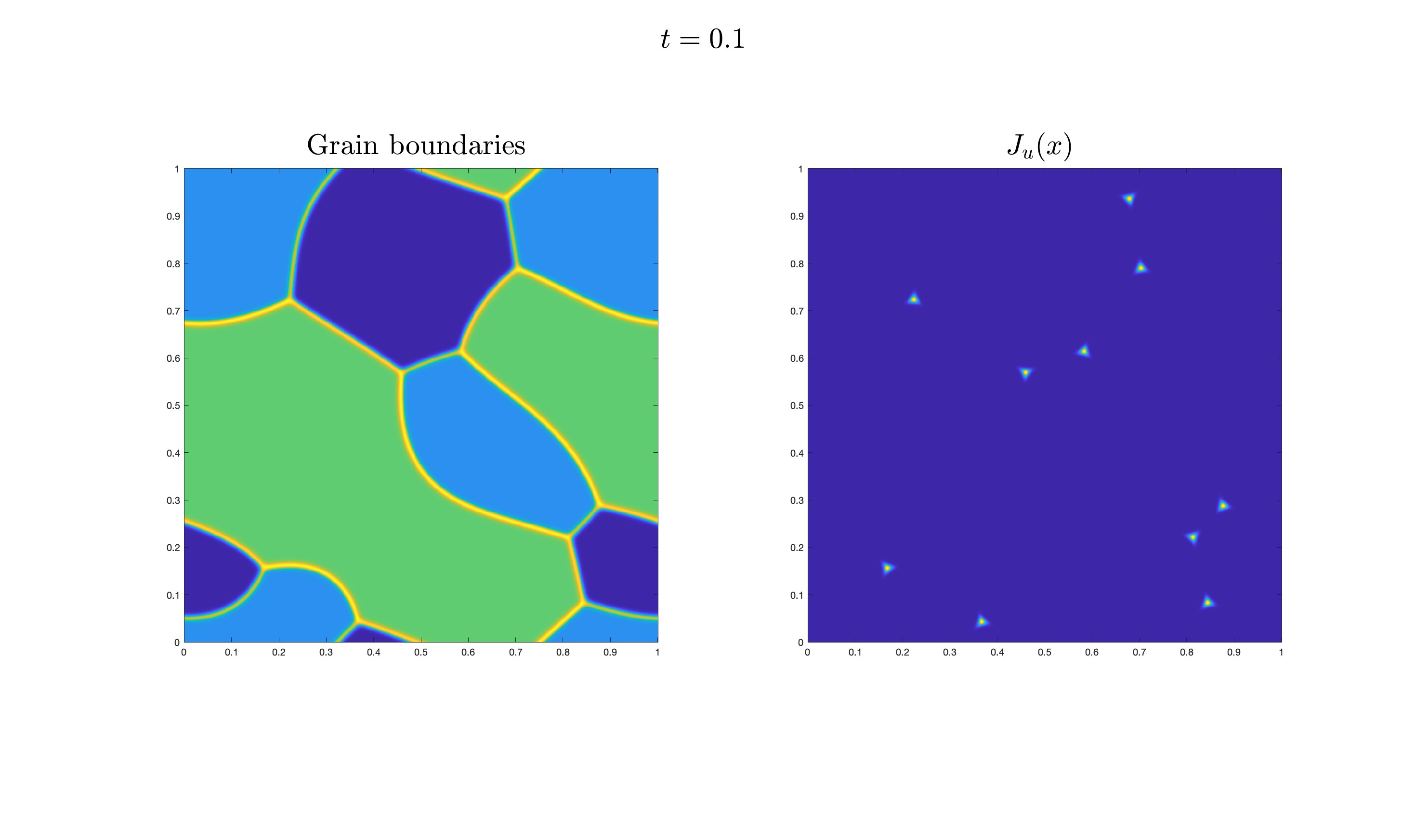}%
        \caption{Evolution of grain boundaries (3 phases) at time $t=0.1$. 10 triple junctions remain.}
        \label{fig:herring2}
\end{figure}

\begin{figure}[H]
    \centering        \includegraphics[width=.9\linewidth]{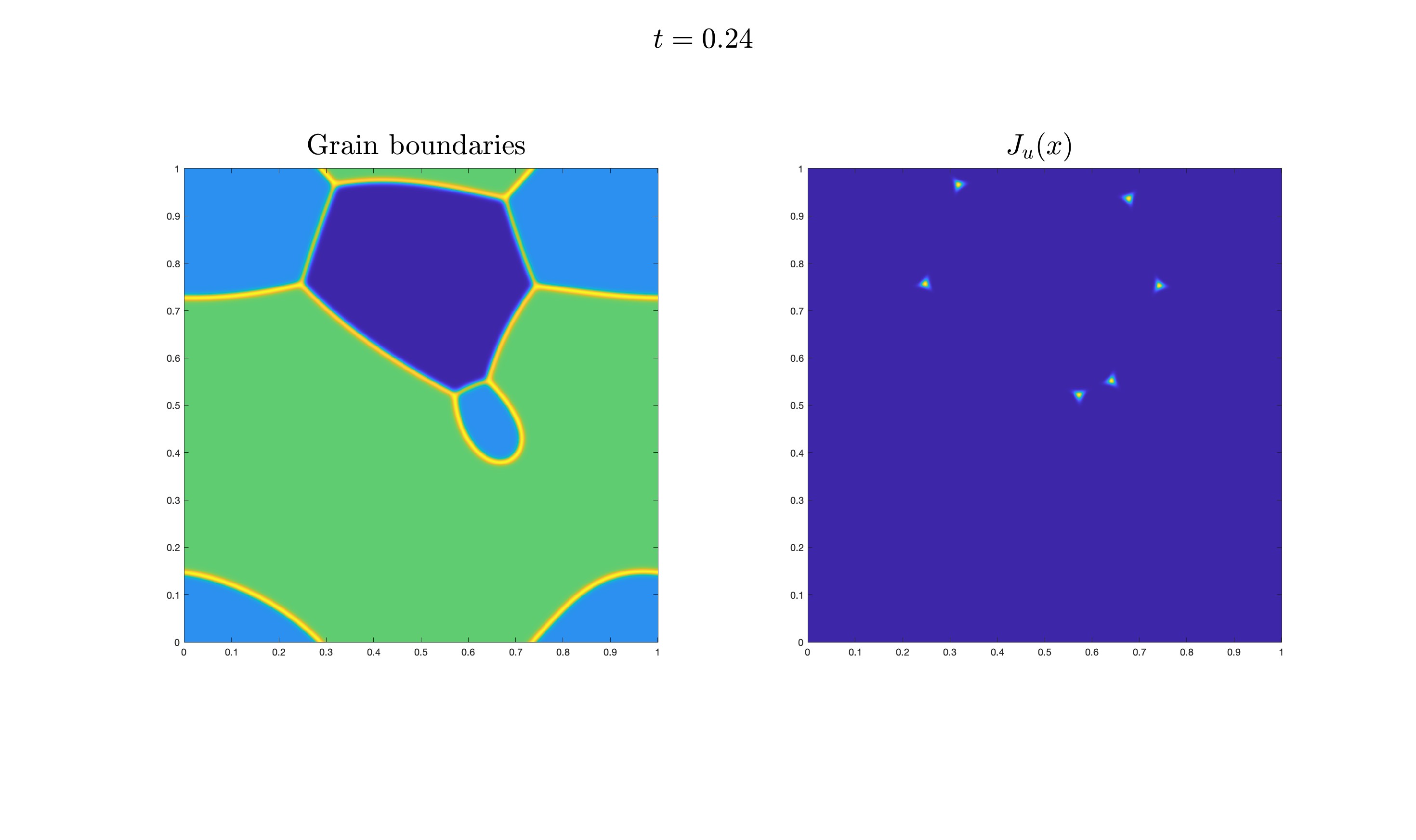}%
        \caption{Evolution of grain boundaries (3 phases) at time $t=0.24$. 6 triple junctions remain. Jacobian $J_u$ is an effective junction indicator throughout the evolution.}
        \label{fig:herring3}
\end{figure}

While the Jacobian plays an important role in the theory of Ginzburg-Landau vortices (e.g. \cite{ginzburglandau}), it and its relation to Baldo's triangle $\mathcal{T}_W$ appears to have received little attention in the study of Allen-Cahn systems.
We believe that the role of the Jacobian in the context of phase-field methods for multiphase curvature flows may be of interest more broadly.

\section{Variational derivation of \eqref{eq:dragphasefield}}
\label{sec:variational}
The mobility factor \eqref{eq:mobility} can be derived from a variational perspective. Recall from \eqref{eq:metric} that triple junction drag arises as a consequence of the triple junction's contribution to the metric. In this vein, we would like to modify the gradient flow structure of the Allen-Cahn equation by introducing an additional penalty term that approximates the movement of the triple junction. Before that, we first review the basic gradient flow structure of the Allen-Cahn equation.

For an energy $E(\cdot)$ defined on a metric space $(X,d)$, there is a natural time-discretization of its gradient flow via the \textit{minimizing-movement scheme} \cite{degiorgi}
\begin{equation}\label{minimizingmovement}
\begin{split}
    x_{n+1} &\in \argmin_{x} \bigg\{E(x) + \frac{1}{2\delta t}d^2(x,x_n) \bigg\}\\
    x_0 &= x^0, \qquad \text{(initial data)}
\end{split}
\end{equation}
with time step size $\delta t$. When we choose the energy to be the Allen-Cahn (or Modica-Mortola \cite{modica_mortola}) approximation to perimeter given by \eqref{eq:mm}, 
 and the metric to be the (rescaled) $L^2$ norm $\big(\eps\int_{\Omega}|u(x)-u_n(x)|^2\;dx\big)^{1/2}$, the minimizing-movement scheme \eqref{minimizingmovement} generates a discrete-in-time approximation of the gradient flow
\begin{align}\label{allencahn}
    u_t = -\frac{\delta E_\eps}{\delta u} = \Delta u - \frac{1}{\eps^2}\nabla W(u)
\end{align}
which is the (vectorial) Allen-Cahn equation \cite{allencahn} whose sharp-interface limit (as $\eps\to 0$) is multiphase motion by mean curvature with the Herring angle condition satisfied at triple junctions \cite{bronsardreitich,lauxsimon}. The connection between the Allen-Cahn equation and motion by mean curvature can be motivated by the fact that the energies $E_\eps(\cdot)$ $\Gamma-$converge (in the $L^1$ topology) to $E(\cdot)$, which generalizes the total length functional to partitions of the domain $\Omega$ via
\begin{align}
    E(\Sigma_1,\Sigma_2,\Sigma_3) \coloneqq \frac{1}{2}\sum_{i,j=1}^3\sigma_{ij}\mathcal{H}^1(\partial^* \Sigma_i \cap \partial^* \Sigma_j)
\end{align}
for $\Sigma_1\cup\Sigma_2\cup\Sigma_3 = \Omega$ and $|\Sigma_i\cap\Sigma_j| = 0$ whenever $i\neq j$ \cite{BALDO199067}. Here, $\partial^*\Sigma_i$ refers to the reduced boundary of $\Sigma_i$ in $\Omega$. (Recall that we have normalized the potential $W$ so that all surface tension coefficients $\sigma_{ij}$ are equal to one.)

Based on our discussion above, the minimizing-movement scheme for the Allen-Cahn equation \eqref{allencahn} can be written as
\begin{equation}\label{eq:acminmov}
\begin{split}
    u_{n+1} &\in \argmin_{u} \bigg\{ E_\eps(u) + \frac{\eps}{2\delta t}\int_\Omega |u(x)-u_n(x)|^2\;dx\bigg\}\\
    u_0 &= u^0.
\end{split}
\end{equation}
Following the gradient flow interpretation of \eqref{curvaturemotion} \& \eqref{drag}, we will add to the metric term $\frac{\eps}{2\delta t}\int_\Omega |u(x)-u_n(x)|^2\;dx$ in (\ref{eq:acminmov}) a penalty for the perturbation of triple junctions.
To demonstrate this, suppose for simplicity that at the $n-$th time step, $u_n$ approximates a partition of $\Omega$ into three grains, separated by three grain boundaries meeting at a triple junction $p_n \in \mathbb{R}^2$.
Suppose also that $u$ (very close to $u_n$) also approximates such a partition, with a triple junction at $p$ that is close to $p_n$. 
Our scheme will be an approximation to
\begin{align}\label{min_mvmt_drag}
    u_{n+1} \in \argmin_{u} \bigg\{ E_\eps(u) + \frac{\eps}{2\delta t}\int_\Omega |u(x)-u_n(x)|^2\;dx + \bigg(\frac{1}{m_{TJ}}\bigg)\bigg(\frac{1}{2\delta t}\bigg)|p-p_n|^2 \bigg\}.
\end{align}

In order to make scheme (\ref{min_mvmt_drag}) practical, we need to approximate, using the order parameters $u$ and $u_n$, the penalty $|p-p_n|^2$ on deviations from the triple junction location $p_n$ at the $n$-th time step. To do so, suppose 
\begin{equation}
\begin{split}
    u(x) &= \mathcal{U}\bigg(\frac{x-p}{\eps}\bigg),\\
    u_n(x) &= \mathcal{U}\bigg(\frac{x-p_n}{\eps}\bigg)
\end{split}
\end{equation}
where $\mathcal{U}$ is a smooth profile (cf. $u^0$ in \eqref{eq:expansion}). Taking a Taylor expansion,
\begin{equation}
    u(x) - u_n(x) \approx \nabla \mathcal{U}\bigg(\frac{x-p_n}{\eps}\bigg)\bigg(\frac{p-p_n}{\eps}\bigg), \qquad \bigg|\frac{p-p_n}{\eps}\bigg| \ll 1.
\end{equation}
Note that this approximation makes sense since we often take the time step size $\delta t$ to be asymptotically smaller than the width of the diffuse layer $\eps$. Then,
\begin{equation}\label{eq:tjperturbation}
\begin{split}
    &\int_\Omega (u(x) - u_n(x))^\top \frac{J_{u_n}(x)}{A_W}\nabla u_n(x) \big( \nabla u_n(x)^\top  \nabla u_n(x) + I_{2\times 2}\big)^{-2}\nabla u_n(x)^\top (u(x) - u_n(x)) \; dx \\
    &\approx \frac{1}{\eps^2} \int_\Omega (p_n - p)^\top \frac{J_\mathcal{U}}{A_W}(\nabla\mathcal{U})^\top (\nabla\mathcal{U})\bigg((\nabla\mathcal{U})^\top (\nabla\mathcal{U}) + \eps^2 I_{2\times 2}\bigg)^{-2} (\nabla\mathcal{U})^\top (\nabla\mathcal{U}) (p_n - p)\;dx.
\end{split}
\end{equation}
where all terms in the integrand involving $\mathcal{U}$ are evaluated at $(x-p_n)/\eps$. Here, the (normalized) Jacobian determinant $J_{u_n}(x)/A_W$ acts as an approximate delta function at the triple junction $p_n$ while the identity matrix $I_{2\times 2}$ acts as a regularization wherever $\nabla u_n$ may not be rank 2 (i.e. away from the triple junction).

Making a change of variables $y = (x-p_n)/\eps$, the last integral is equal to
\begin{equation}
\begin{split}
    \int_{(\Omega - p_n)/\eps} (p_n - p)^\top &\frac{J_\mathcal{U}}{A_W}(\nabla\mathcal{U})^\top (\nabla\mathcal{U})\bigg((\nabla\mathcal{U})^\top (\nabla\mathcal{U}) + \eps^2 I_{2\times 2}\bigg)^{-2} (\nabla\mathcal{U})^\top (\nabla\mathcal{U}) (p_n - p)\;dy\\
    &= |p-p_n|^2 + \mathcal{O}(\eps|p-p_n|^2)
\end{split}
\end{equation}
where the last equality will be justified in the next section (Lemma \ref{lem:asymptotic}). This means that we can incorporate the integral on the first line of \eqref{eq:tjperturbation} as an approximation to the triple junction perturbation $|p-p_n|^2$ into scheme \eqref{min_mvmt_drag}, rewriting it as
\begin{equation}
\begin{split}
    u_{n+1} &\in \argmin_{u} \bigg\{ E_\eps(u) + \frac{\eps}{2\delta t}\int_\Omega |u(x)-u_n(x)|^2\;dx + \\
    &\bigg(\frac{1}{m_{TJ}}\bigg)\bigg(\frac{1}{2\delta t}\bigg) \int_\Omega (u(x) - u_n(x))^\top \frac{J_{u_n}(x)}{A_W}\nabla u_n(x) \big( \nabla u_n(x)^\top  \nabla u_n(x) + I_{2\times 2}\big)^{-2}\nabla u_n(x)^\top (u(x) - u_n(x)) \; dx
    \bigg\},
\end{split}
\end{equation}
whose formal limit as $\delta t \to 0$ is \eqref{eq:dragphasefield}.

\section{Formal asymptotic analysis}
\label{sec:asymptotics}
The preceding section derives equation \eqref{eq:dragphasefield} from a minimizing-movement perspective, where we designed a proxy for the triple junction perturbation $|p-p_n|^2$. In this section, we further justify equation \eqref{eq:dragphasefield} via formal asymptotic analysis.

The analysis of the solution to \eqref{eq:dragphasefield} away from triple junctions (i.e. motion by curvature of the interfaces) follows those of \cite{RSK,bronsardreitich} closely. Thus, we focus on the behavior near a triple junctions. Let $p(t) \in \R^2$ denote the position of the triple junction at time $t$. Introducing the stretched variables
\begin{equation}
    y = \dfrac{x - p(t)}{\eps},
\end{equation}
we assume an expansion
\begin{equation}\label{eq:expansion}
    u(x,t) = u^0(y,t) + \eps u^1(y,t) +  \ldots.
\end{equation}
Expanding the various terms in \eqref{eq:dragphasefield}, we get
\begin{equation}\label{eq:asymptotic1}
\begin{split}
    u_t(x,t) &= -\frac{1}{\eps}\nabla u^0(y,t)\dot{p}(t) + u_t^0(y,t) + \ldots\\
    \Delta u &- \frac{1}{\eps^2}\nabla W(u) = \frac{1}{\eps^2}\bigg( \Delta u^0 - \nabla W(u^0) \bigg) + \ldots.
\end{split}
\end{equation}

Multiplying both sides of \eqref{eq:dragphasefield} by $(\nabla u)^\top M(\nabla u)^{-1}$, we get
\begin{equation}\label{eq:rearranged}
    (\nabla u)^\top M(\nabla u)^{-1} u_t = (\nabla u)^\top\bigg(\Delta u - \frac{1}{\eps^2} \nabla W(u)\bigg).
\end{equation}
Plugging in the expansion for $u(x,t)$ into the 
expression \eqref{eq:woodbury} for the inverse mobility $M(\nabla u)^{-1}$ gives
\begin{equation}\label{eq:asymptotic2}
    M(\nabla u)^{-1} = \frac{1}{\eps m_{TJ}}\frac{J_{u^0}}{A_W} \nabla u^0 \big( (\nabla u^0)^\top \nabla u^0 +\eps^2 I_{2\times 2}\big)^{-2}(\nabla u^0)^\top + I_{2\times 2} + \ldots.
\end{equation}
The leading order term on the left-hand side of \eqref{eq:rearranged} is
\begin{equation}\label{eq:LHSleading}
\begin{split}
    \bigg(\frac{1}{\eps}(\nabla u^0)^\top \bigg) & \bigg(\frac{1}{\eps m_{TJ}}\frac{J_{u^0}}{A_W} \nabla u^0 \big( (\nabla u^0)^\top \nabla u^0 +\eps^2 I_{2\times 2}\big)^{-2}(\nabla u^0)^\top\bigg)\bigg(-\frac{1}{\eps}\nabla u^0(y,t)\dot{p}(t)\bigg)\\
    = &\frac{-J_{u^0}}{\eps^3 m_{TJ}A_W}B (B + \eps^2 I_{2\times 2})^{-2}B \dot{p}(t)
\end{split}
\end{equation}
where $B \coloneqq (\nabla u^0)^\top \nabla u^0$ is a positive semidefinite matrix.

\begin{lemma}\label{lem:asymptotic}
    \begin{equation}
        J_{u^0} B (B + \eps^2 I_{2\times 2})^{-2}B = J_{u^0}I_{2\times 2} + \mathcal{O}(\eps)
    \end{equation}
    where the constant appearing in the error term depends only on the largest singular value of $\nabla u^0$.
\end{lemma}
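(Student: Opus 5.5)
The plan is to diagonalize $B$ and reduce the matrix identity to a scalar estimate for each eigenvalue, weighted by $J_{u^0}$. Since $B=(\nabla u^0)^\top\nabla u^0$ is symmetric positive semidefinite, write $B=Q\,\mathrm{diag}(s_1^2,s_2^2)\,Q^\top$ with $Q$ orthogonal. Here $s_1\ge s_2\ge 0$ are the singular values of $\nabla u^0$. Then $B$ and $(B+\eps^2 I_{2\times 2})^{-2}$ are simultaneously diagonal, and
\begin{equation*}
J_{u^0}B(B+\eps^2 I_{2\times 2})^{-2}B - J_{u^0}I_{2\times 2} = Q\,\mathrm{diag}\big(J_{u^0}(f(s_1^2)-1),\,J_{u^0}(f(s_2^2)-1)\big)Q^\top,
\end{equation*}
with $f(\lambda)=\lambda^2/(\lambda+\eps^2)^2\in[0,1]$. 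Moreover $J_{u^0}=\sqrt{\det B}=s_1s_2$. So it suffices to bound $s_1s_2\,|f(s_k^2)-1|$ by $C(s_1)\,\eps$ for $k=1,2$. The operator norm is invariant under conjugation by $Q$, so this gives the matrix bound.

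For the scalar bound, compute
\begin{equation*}
1-f(\lambda)=\frac{2\lambda\eps^2+\eps^4}{(\lambda+\eps^2)^2}.
\end{equation*}
For $k=2$ (the worst case, since $s_2\le s_1$), we need $s_1 s_2\cdot\frac{2s_2^2\eps^2+\eps^4}{(s_2^2+\eps^2)^2}\le C s_1\eps$. It is enough to show $\sup_{s\ge0} g(s)\le C$ with
\begin{equation*}
g(s)=\frac{s(2s^2\eps^2+\eps^4)}{\eps(s^2+\eps^2)^2}.
\end{equation*}
Substituting $s=\eps t$ gives
\begin{equation*}
g=\frac{t(2t^2+1)}{(t^2+1)^2},
\end{equation*}
which is independent of $\eps$, continuous, vanishes at $t=0$, and decays like $2/t$ as $t\to\infty$. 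It is therefore bounded by an absolute constant $c_0$. This yields $s_1s_2(1-f(s_2^2))\le c_0 s_1\eps$. The case $k=1$ follows in the same way: $s_2\le s_1$ gives $s_1s_2(1-f(s_1^2))\le s_1\cdot s_1(1-f(s_1^2))\le c_0s_1\eps$, by the same substitution applied to $s_1$. Hence the error is bounded by $c_0\,s_1\,\eps$, so the constant depends only on the largest singular value $s_1=\|\nabla u^0\|$, as claimed.

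The main obstacle is the regime where $\nabla u^0$ is nearly rank-deficient, i.e.\ $s_2\lesssim\eps$, away from the junction core. There $B(B+\eps^2)^{-2}B$ is far from the identity; the first-order expansion $1-f\approx 2\eps^2/\lambda$ breaks down and would give a bound like $\eps^2/s_2$. The key point is that the prefactor $J_{u^0}$ carries a factor $s_2$ that kills this degeneracy. The scaling $s=\eps t$ makes this uniform. I would make sure to keep $J_{u^0}$ multiplied through before estimating, rather than estimating $B(B+\eps^2)^{-2}B-I$ alone, which is not $\mathcal{O}(\eps)$ uniformly.
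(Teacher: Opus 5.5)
Your proposal is correct and follows essentially the same route as the paper: diagonalize $B$, use the factor of $J_{u^0}$ attached to the same eigenvalue to absorb the degeneracy of $1-\lambda_k^2/(\lambda_k+\eps^2)^2$, and rescale to an $\eps$-independent bounded function of one variable. The only difference is cosmetic: the paper substitutes $z=\eps/\sqrt{\lambda_1}$, the reciprocal of your $t$, and bounds the function explicitly via $z(2+z^2)/(1+z^2)^2\le 2z/(1+z^2)\le 1$, so your $c_0$ can be taken to be $1$.
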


\begin{proof}
    We first orthogonally diagonalize 
    \begin{equation}
        B = Q \Lambda Q^\top, \qquad \Lambda = \begin{pmatrix}
            \lambda_1 & 0 \\ 0 & \lambda_2
        \end{pmatrix},
        \qquad \lambda_1,\lambda_2 \geq 0
    \end{equation}
    and note that $J_{u^0} = \sqrt{\lambda_1 \lambda_2}$. Then,
    \begin{equation}
        J_{u^0} B (B + \eps^2 I_{2\times 2})^{-2}B = Q\begin{pmatrix}
            \mu_1(\eps) & 0 \\ 0 & \mu_2(\eps)
        \end{pmatrix}
        Q^\top
    \end{equation}
    where 
    \begin{equation}
        \mu_i(\eps) \coloneqq \frac{\lambda_i^2 \sqrt{\lambda_1 \lambda_2}}{(\lambda_i + \eps^2)^2}, \qquad i = 1,2.
    \end{equation}
    Our goal is to show that 
    \begin{equation}
        |\mu_i(\eps) - \sqrt{\lambda_1 \lambda_2}| \leq C\eps
    \end{equation}
    for some constant $C$. WLOG, we can just consider the case when $i=1$.
    \begin{equation}
        \begin{split}
            |\mu_i(\eps) - \sqrt{\lambda_1 \lambda_2}| &= \sqrt{\lambda_1 \lambda_2}\bigg|1-\frac{\lambda_1^2}{(\lambda_1 + \eps^2)^2}\bigg|\\
            &= \eps\sqrt{\lambda_2}f(\eps),
        \end{split}
    \end{equation}
    where 
    \begin{equation}
        f(\eps) \coloneqq \sqrt{\lambda_1}\bigg(\frac{2\lambda_1 \eps + \eps^3}{(\lambda_1 + \eps^2)^2}\bigg)\geq 0.
    \end{equation}
    We claim that 
    \begin{equation}
        \sup_{\eps>0} f(\eps) \leq 1.
    \end{equation}
    Indeed, setting $z = \eps/\sqrt{\lambda_1}$, we get
    \begin{equation}
        f(\eps) = \frac{z(2+z^2)}{(1+z^2)^2} \leq \frac{2z}{1+z^2} \leq 1.
    \end{equation}
    Thus,
    \begin{equation}
        |\mu_i(\eps) - \sqrt{\lambda_1 \lambda_2}| \leq \sqrt{\lambda_2}\eps \leq C\eps
    \end{equation}
    where 
    \begin{equation}
        C \coloneqq \sup_{y\in\R^2} \sqrt{\lambda_{\max}((\nabla u^0)^\top \nabla u^0)}.
    \end{equation}
    
\end{proof}

By \eqref{eq:LHSleading}, the leading order term on the LHS of \eqref{eq:rearranged} is
\begin{equation}
    \frac{-J_{u^0}}{\eps^3 m_{TJ}A_W} \dot{p}(t)
\end{equation}
while the the leading order term on the RHS of \eqref{eq:rearranged} is
\begin{equation}
    \frac{1}{\eps^3}(\nabla u^0)^\top \bigg( \Delta u^0 - \nabla W(u^0) \bigg).
\end{equation}
Thus, the leading order profile $u^0$ solves
\begin{equation}\label{eq:tjprofile}
    (\nabla u^0)^\top \bigg( \Delta u^0 - \nabla W(u^0) \bigg) = \frac{-J_{u^0}}{m_{TJ}A_W} \dot{p}(t),
\end{equation}
subject to the same matching conditions as \cite{bronsardreitich} equation (18) (see also \cite{alikakos2018elliptic} Definition 3.2). In words, as $|y| \to \infty$ along each of the three directions tangent to an interface (i.e. $\tau_i$), the solution $u^0$ converges to the 1-dimensional optimal profile (geodesic) connecting two adjacent wells of $W$.

\begin{figure}[H]
    \centering        \includegraphics[width=.7\linewidth]{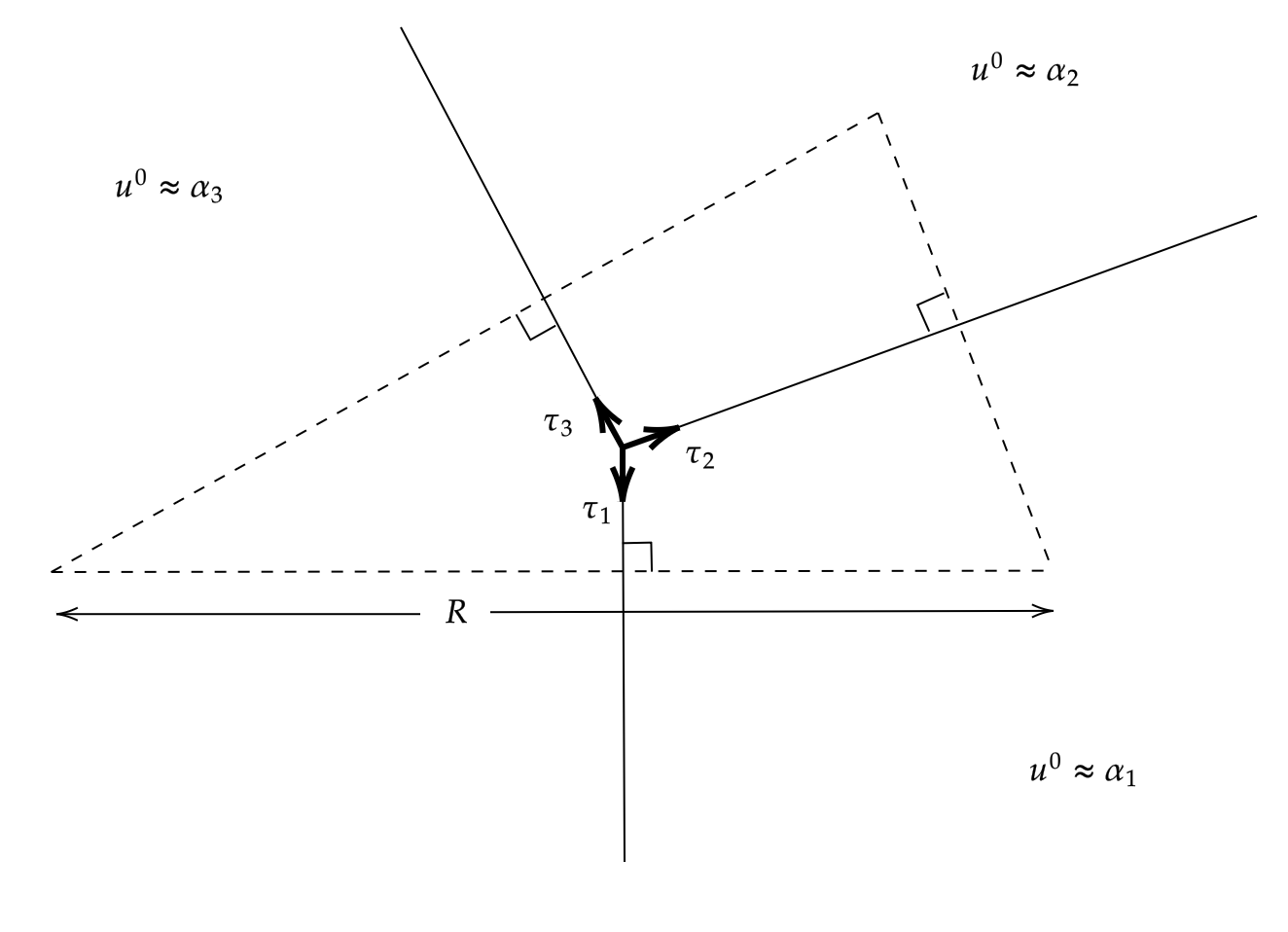}%
        \caption{The domain of integration $T_R$ in \eqref{eq:forcebalance}, same as the one used in \cite{bronsardreitich} for standard Allen-Cahn.}
        \label{fig:triangle}
\end{figure}

Following \cite{bronsardreitich}, we integrate \eqref{eq:tjprofile} over a large triangular domain $T_R$ with base length $R$ (Figure \ref{fig:triangle}):
\begin{equation}\label{eq:forcebalance}
     -\frac{1}{m_{TJ}}\frac{1}{A_W}\bigg(\int_{T_R} J_{u^0}(y)\;dy\bigg) \dot{p}(t) = \int_{T_R}(\nabla u^0)^\top \big(\Delta u^0 - \nabla W(u^0)\big) \; dy.
\end{equation}
The integrand on the RHS is the divergence of a tensor $T$ defined by
\begin{equation}
    T_{ij} = u^0_{y_i} \cdot u^0_{y_j} - \delta_{ij}\bigg(\frac{1}{2}|\nabla u^0|^2 + W(u^0)\bigg), \qquad i,j \in \{1,2\}.
\end{equation}
It was shown in \cite{bronsardreitich} (see also \cite{alikakos2018elliptic} Chapter 3) that 
\begin{equation}
\begin{split}
    \int_{T_R}(\nabla u^0)^\top \big(\Delta u^0 - \nabla W(u^0)\big) \; dy &= \int_{T_R} \nabla\cdot T \; dy \\
    &= \int_{\partial T_R} T\nu \; dS\\
    &\to -(\tau_1 + \tau_2 + \tau_3) , \qquad \text{ as }R \to \infty.
\end{split}
\end{equation}
Recalling the change-of-variables formula from the previous section and the matching condition satisfied by $u^0$ at infinity, we have
\begin{equation}
    \int_{T_R} J_{u^0}(y)\;dy \to A_W, \text{ as } R\to\infty.
\end{equation}
Thus, as $R \to \infty$, we deduce from \eqref{eq:forcebalance} the triple junction drag condition
\begin{equation}
    \dot{p}(t) = m_{TJ}(\tau_1 + \tau_2 + \tau_3).
\end{equation}

\begin{remark}
    With an eye toward efficiency of numerical implementation and scalability, we have intentionally written our mobility factor $M(\nabla u)$ in the form of \eqref{eq:mobility} to emphasize that we only need to invert a $2\times 2$ matrix. This facilitates the generalization of our model to the case where there may be $N\geq 4$ phases and the order parameter $u(x,t)$ takes values in $R^{N-1}$ (see Section \ref{sec:Nphases}). However, if we are only concerned with the case where there are $N=3$ phases and $\nabla u \in \R^{2\times 2}$, then the mobility factor $M(\nabla u)$ in \eqref{eq:mobility} can be substituted for a simplified version, namely
\begin{equation}\label{eq:simplifiedmob}
    M(\nabla u) = \bigg(I_{2\times 2} + \dfrac{J_u}{A_W \eps m_{TJ}} \big(\nabla u \nabla u^\top + I_{2\times 2}\big)^{-1} \bigg)^{-1},
\end{equation}
which yields the same leading order equation \eqref{eq:tjprofile}.
\end{remark}

\section{Numerical simulations}
\label{sec:numerics}
We test convergence of our phase-field method \eqref{eq:dragphasefield} \& \eqref{eq:mobility} to the sharp interface description \eqref{curvaturemotion} \& \eqref{drag} under refinement of discretization size and the diffuse interface thickness $\eps$. Periodic boundary conditions are used on the computational domain that is a square, which is discretized into $n=256, 512, 1024$ and $2048$ grid points along each dimension.
The following finite differences discretization of equation \eqref{eq:dragphasefield} was used on a uniform grid:
\begin{equation}\label{eq:udiscretized}
    \frac{u_{n+1} - u_n}{\delta t} = M(\nabla_{\delta x} u_n)\bigg( \Delta_{\delta x} u_n - \frac{1}{\eps^2}\nabla W(u_n) \bigg).
\end{equation}
where $\Delta_{\delta x}$ denotes the standard five point discretization of the Laplacian on a uniform grid, and $\nabla_{\delta x}$ uses centered differences for first derivatives.
The discretization of (\ref{eq:udiscretized}) was kept fully explicit and as simple as possible, as our focus here is on verifying convergence of \eqref{eq:dragphasefield} \& \eqref{eq:mobility} rather than finding the most efficient implementation.

In practice, we observe that with the typical choice $\eps = 6 \delta x$ for the phase field parameter $\eps$, the scheme \eqref{eq:udiscretized} is stable under the restriction
\begin{equation}\label{timestep}
    \delta t \lesssim (\delta x)^2.
\end{equation}
This is essentially the same CFL condition as that of explicit Euler scheme for the standard (constant mobility) Allen-Cahn equation.
The benign effect of the nontrivial mobility factor $M(\nabla u)$ on stability properties of scheme \eqref{eq:udiscretized} can be easily explained rigorously for a potential $W$ that has bounded Hessian (e.g. obtained by modifying the natural choice \eqref{triplewellpotential} to have quadratic growth at $\infty$).

Indeed, as the second term in \eqref{eq:woodbury} is positive, we see that the symmetric positive definite inverse mobility matrix $M(\nabla_{\delta x} u_n)^{-1}$ has minimum eigenvalue bounded below by one.
%Hence, the largest eigenvalue of $M(\nabla_{\delta x} u_n)$ is bounded from above by one.
We'll show energy stability of (\ref{eq:udiscretized}), with the Lyapunov function
\begin{equation}
\label{eq:stability0}
E_{\delta x , \eps}(u) = \sum \frac{1}{2} \eps |\nabla_{\delta x} u|^2 + \frac{1}{\eps} W(u)
\end{equation}
where the summation is over all grid points;
this is a natural discrete version of Modica-Mortola energy \eqref{eq:mm}.
To that end, first note that scheme (\ref{eq:udiscretized}) has the variational formulation
\begin{equation}
\label{eq:stability1}
\begin{split}
u_{n+1} = \argmin_u \Big\{ G_n(u) := E_{\delta x,\eps}(u_n) + &\sum -\varepsilon(u-u_n)\Delta_{\delta x} u_n + \frac{1}{\eps} \nabla W(u_n) \cdot (u-u_n)\\
+&\sum\frac{\eps}{2\delta t} \langle M(\nabla_{\delta x}u_n)^{-1} (u-u_n) \, , \,  (u-u_n) \rangle \Big\}
\end{split}
\end{equation}
Observe that the first three terms in $G_n(u)$ of (\ref{eq:stability1}) are the linearization of (\ref{eq:stability0}) at $u=u_n$, and the last term is a strongly convex proximal (movement limiting) term.
Hence, first of all,
$$ G_n(u_n) = E_{\delta x,\eps}(u_n).$$
Moreover, since by assumption $\sup_\xi \| D^2W(\xi) \|$ is bounded, there is a constant $C>0$ so that whenever $\delta t < C \min\{ \delta x^2 \, , \, \eps^2 \}$ we have
$$ E_{\delta x, \eps} (u) \leq G_n(u) \mbox{ for all } u.$$
That means
$$ E_{\delta x,\eps}(u_{n+1}) \leq G_n(u_{n+1}) \leq G_n(u_n) = E_{\delta x,\eps} (u_n) $$
establishing energy stability under the advertised CFL condition.

\subsection{Experiment 1: Traveling wave solution with $m_{TJ} = 2$}

As our first benchmark, we use the exact traveling wave solutions of \eqref{curvaturemotion} \& \eqref{drag} given in  \cite{GOTTSTEIN2002703}; these are the analogue in the presence of junction drag of ``grim-reaper'' solutions of the no-drag, Herring angle setting.
The profile of these solutions is given by
\begin{align}\label{translating_solution}
    y(x) = -\frac{x_0}{\ln(\sin(\theta))}\arccos\bigg( \exp\bigg(  \frac{x}{x_0}\ln(\sin(\theta))\bigg) \bigg)
\end{align}
where the angle $\theta$ can be determined via the relation
\begin{align}\label{angle}
    -\frac{\ln(\sin(\theta))}{1-2\cos(\theta)} = \frac{m_{TJ}x_0}{m_{GB}} = m_{TJ}x_0.
\end{align}
See Figure \ref{fig:geometry2} for an illustration. 
There, curved interfaces on the left translate rightwards with a constant velocity of
\begin{align}\label{exact_velocity}
    V = -\frac{1}{x_0}\ln(\sin(\theta)) = m_{TJ}(1-2\cos(\theta))
\end{align}
while the curved interfaces on the right translate leftwards with equal speed.
In this special benchmark solution, the third interface at each triple junction remains a straight line segment that evolves by shortening from its endpoints.
\begin{figure}[H]
    \centering
    \includegraphics[width=1\linewidth]{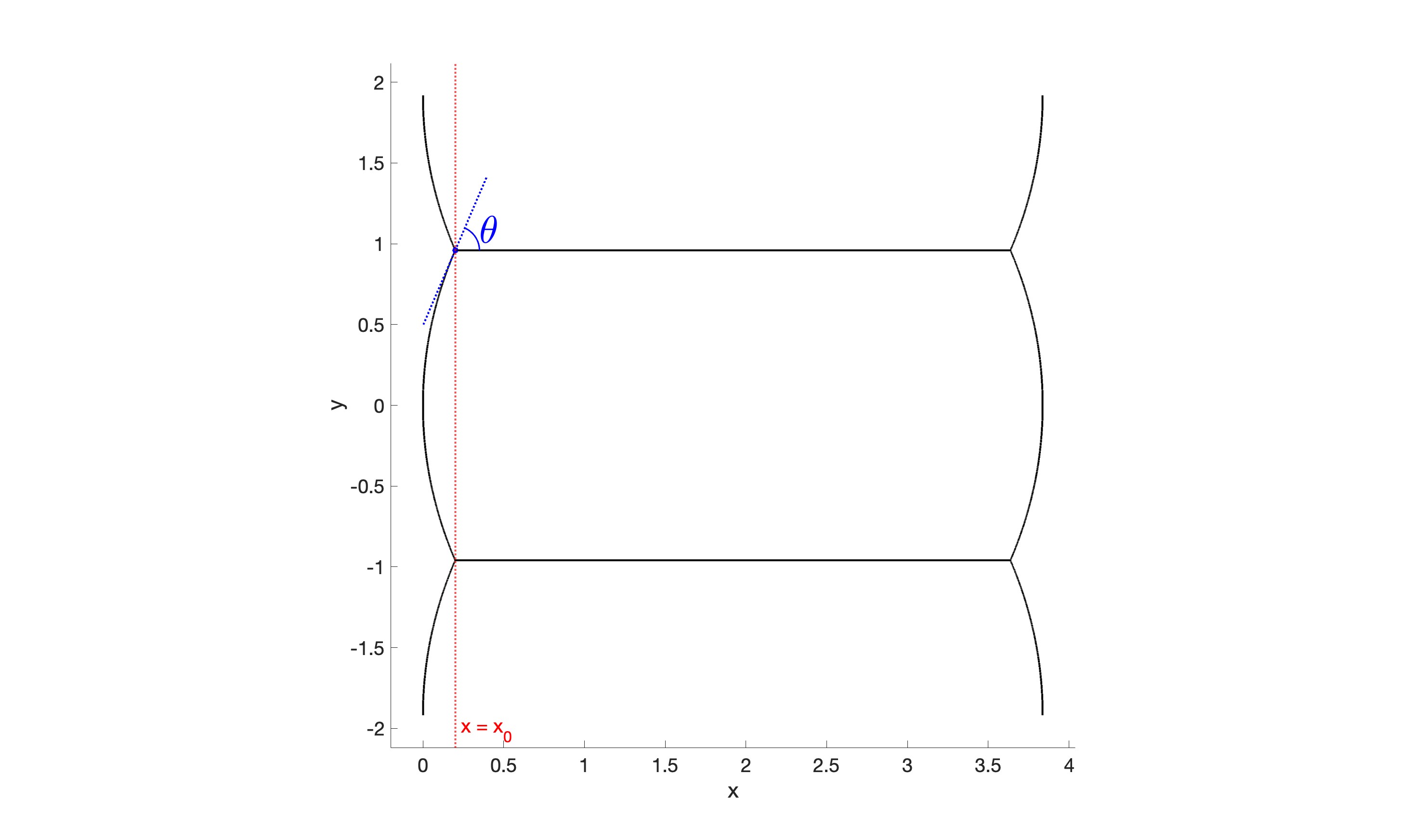}%
    \caption{Example of an exact translating solution with triple junction drag.}
    \label{fig:geometry2}%
\end{figure}

In Figures \ref{fig:highmob} \& \ref{fig:lowmob}, the black network of curves is the initial condition.
The red network of curves represents the benchmark, exact solution given by (\ref{translating_solution}) , (\ref{angle}), and (\ref{exact_velocity}).
The deviation of the exact solution from the superimposed phase-field approximation, given by equation \eqref{eq:dragphasefield}, was measured using the area of symmetric difference for each phase, i.e.
\begin{equation}\label{eq:avg_error}
\mbox{error} = \frac{1}{3}\sum_{i=1}^3 \text{Area} \Big( \Sigma_i^{\text{pf}}(T) \Delta\Sigma_i^{\text{ex}}(T) \Big)
\end{equation}
where $\Sigma_i^{\text{pf}}(t)$ represents the region occupied by the $i$-th phase at time $t$ according to the phase-field computation, and $\Sigma_i^{\text{ex}}(t)$ represents the same according to the exact solution.
The sets $\Sigma_i^{\text{pf}}(T)$ were obtained from the (vectorial) order parameters $u(x,T)$ via
\begin{align}\label{eq:L1error}
     \Sigma_i^{\text{pf}}(T) := \{x\in\Omega \;:\; |u(x,T) - \alpha_i|<\lambda\}
\end{align}
where $\lambda$ is a small threshold. In all of our numerical simulations, there are $N=3$ phases, shown in different colors.
\medskip

\begin{figure}[H]
\begin{center}
        \subfloat[$n=256$]{%
            \includegraphics[width=.31\linewidth,
    trim=20cm 0cm 20cm 0cm,
    clip]{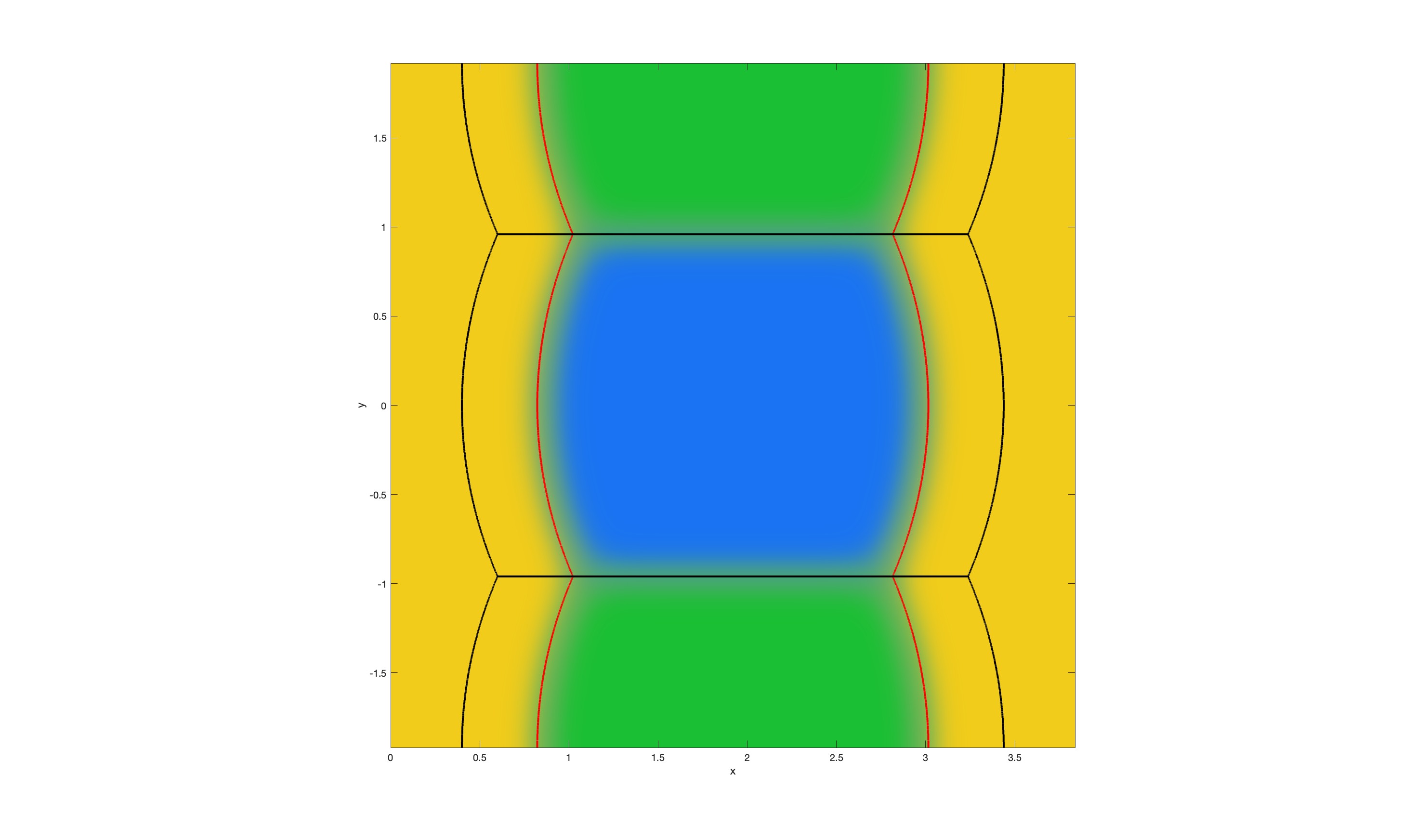}%
            \label{subfig:highmob1}%
        }\hspace{40pt}
        \subfloat[$n=512$]{%
            \includegraphics[width=.31\linewidth,
    trim=20cm 0cm 20cm 0cm,
    clip]{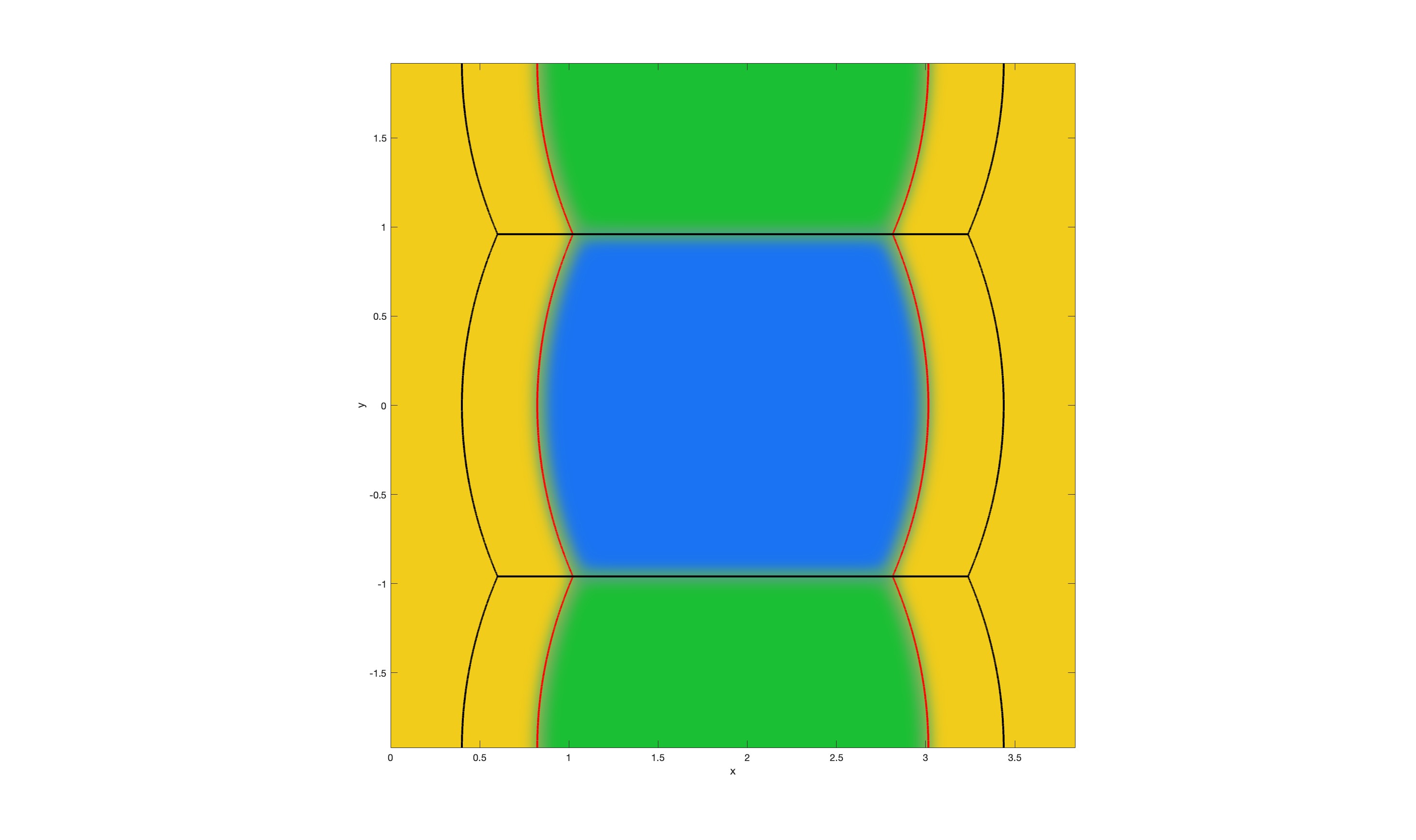}%
            \label{subfig:highmob2}%
        }\\
        \subfloat[$n=1024$]{%
            \includegraphics[width=.31\linewidth,
    trim=20cm 0cm 20cm 0cm,
    clip]{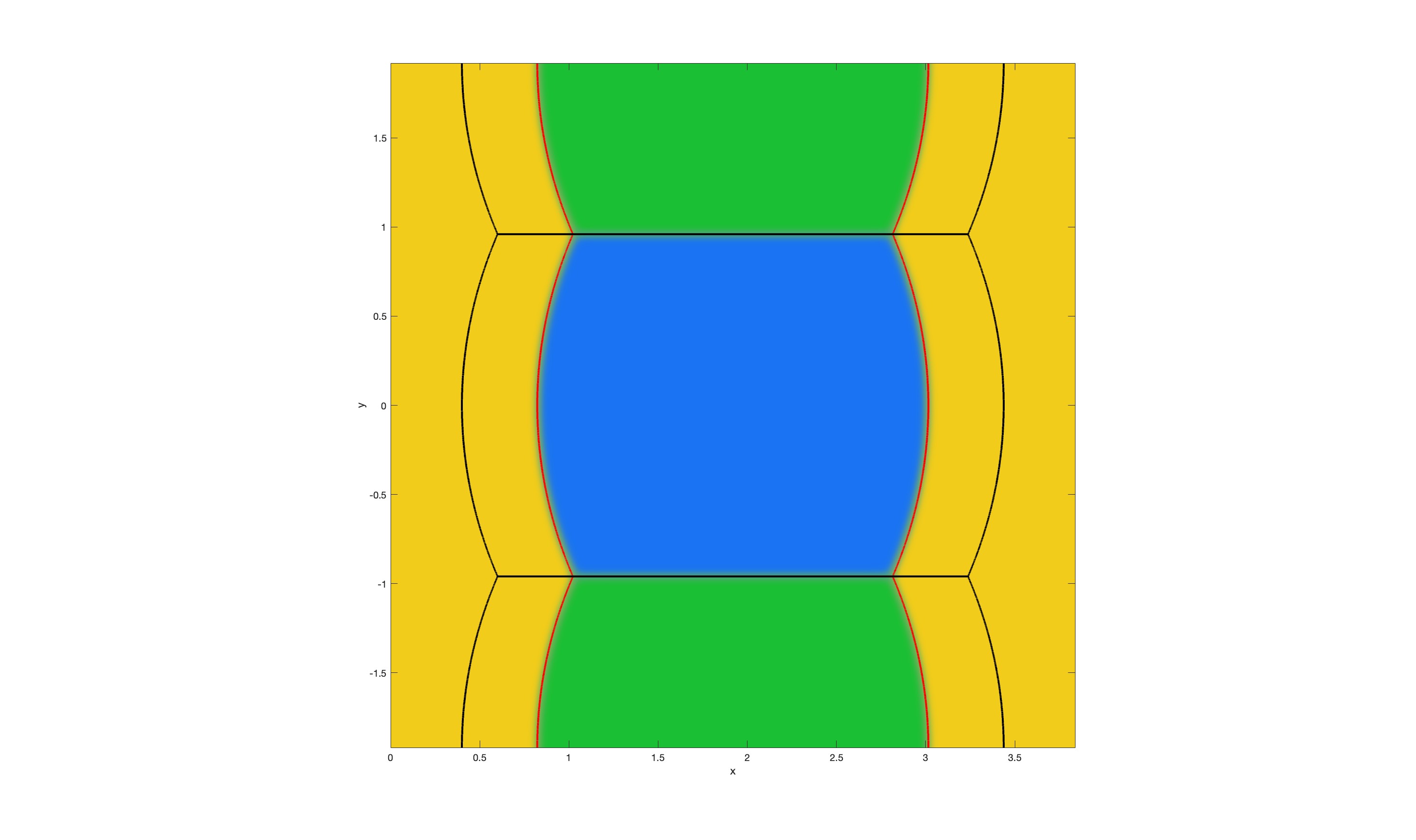}%
            \label{subfig:highmob3}%
        }\hspace{40pt}
        \subfloat[$n=2048$]{%
            \includegraphics[width=.31\linewidth,
    trim=20cm 0cm 20cm 0cm,
    clip]{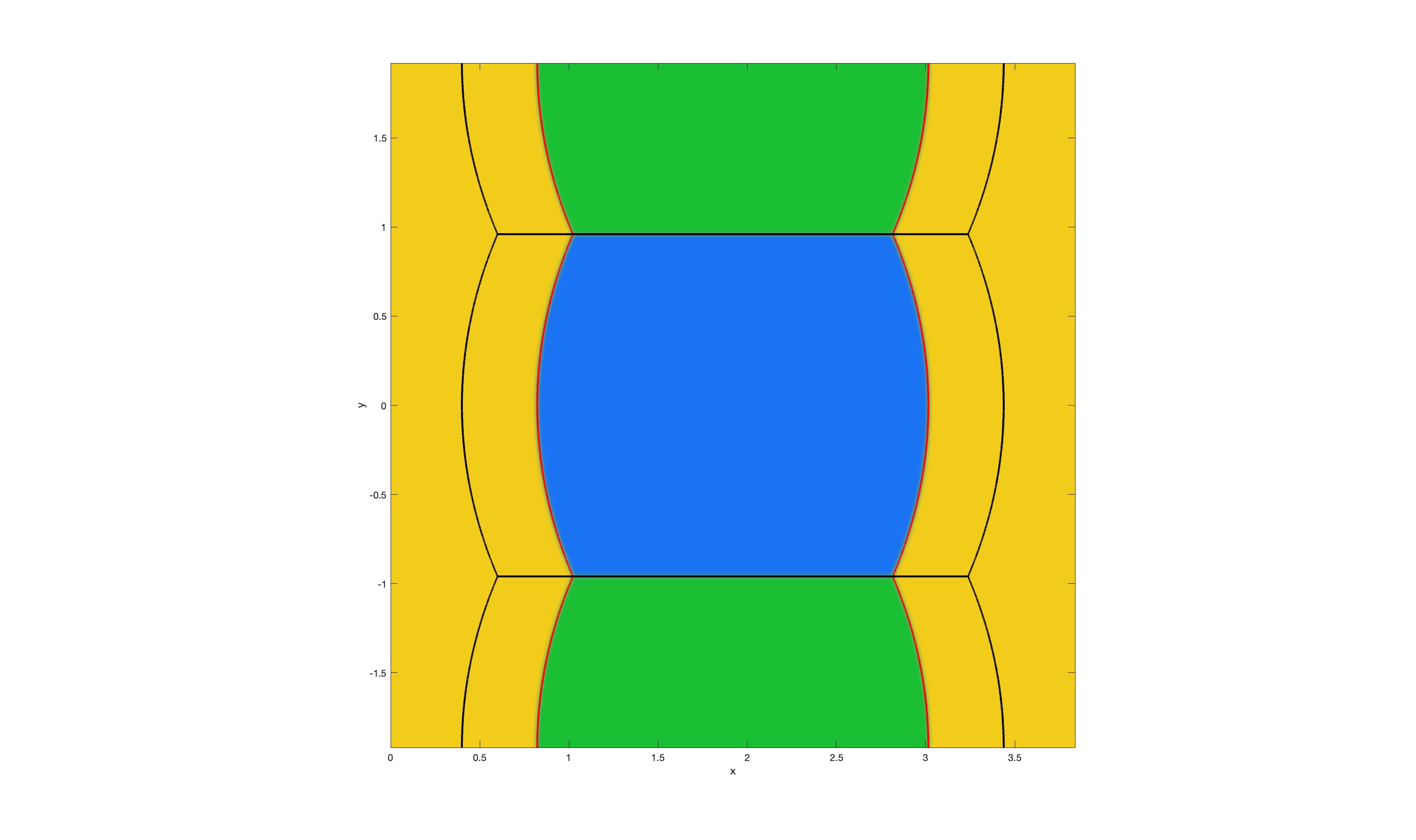}%
            \label{subfig:highmob4}%
        }\hfill
        \caption{Convergence study to a traveling wave exact solution with $m_{TJ} = 2$; in this exact solution, the junction angle is a constant $\theta = 66.7^{\circ}$ throughout the evolution. Final time $T=1$.}
        \label{fig:highmob}
\end{center}
\end{figure}

\begin{center}
\begin{minipage}[t]{0.54\textwidth}
    \centering
    \vspace{30pt}
    \def\arraystretch{1.5}
    \small
    \resizebox{\linewidth}{!}{
        \begin{tabular}{|l|l|l|l|l|}
        \hline
        $\delta x$ & $\eps$ &  $\delta t$            & Error  & Order \\ \hline
        3.837/(256-1) = 0.0150    & 0.0902  &  $4.52 \times 10^{-5}$ & 0.931  & -     \\ \hline
        3.837/(512-1) = 0.00750    & 0.0450  &  $1.12 \times 10^{-5}$ & 0.472  & 0.978 \\ \hline
        3.837/(1024-1) = 0.00375    & 0.0225 &  $2.81 \times 10^{-6}$ & 0.238  & 0.988  \\ \hline
        3.837/(2048-1) = 0.00187    & 0.0112 &  $7.02\times 10^{-7}$  & 0.119 & 0.999 \\ \hline
        \end{tabular}
        }
        \vspace{30pt}
        \captionof{table}{Error and order of convergence for the numerical test shown in Figure \ref{fig:highmob}, using the proposed method \eqref{eq:dragphasefield} \& \eqref{eq:mobility}.}
        \label{tab:table_highmob}
    \end{minipage}
    \hfill
    \begin{minipage}[t]{0.44\textwidth}
        \centering
        % \vspace{0pt}
        \begin{figure}[H]
        \centering
        \includegraphics[width=1\linewidth]{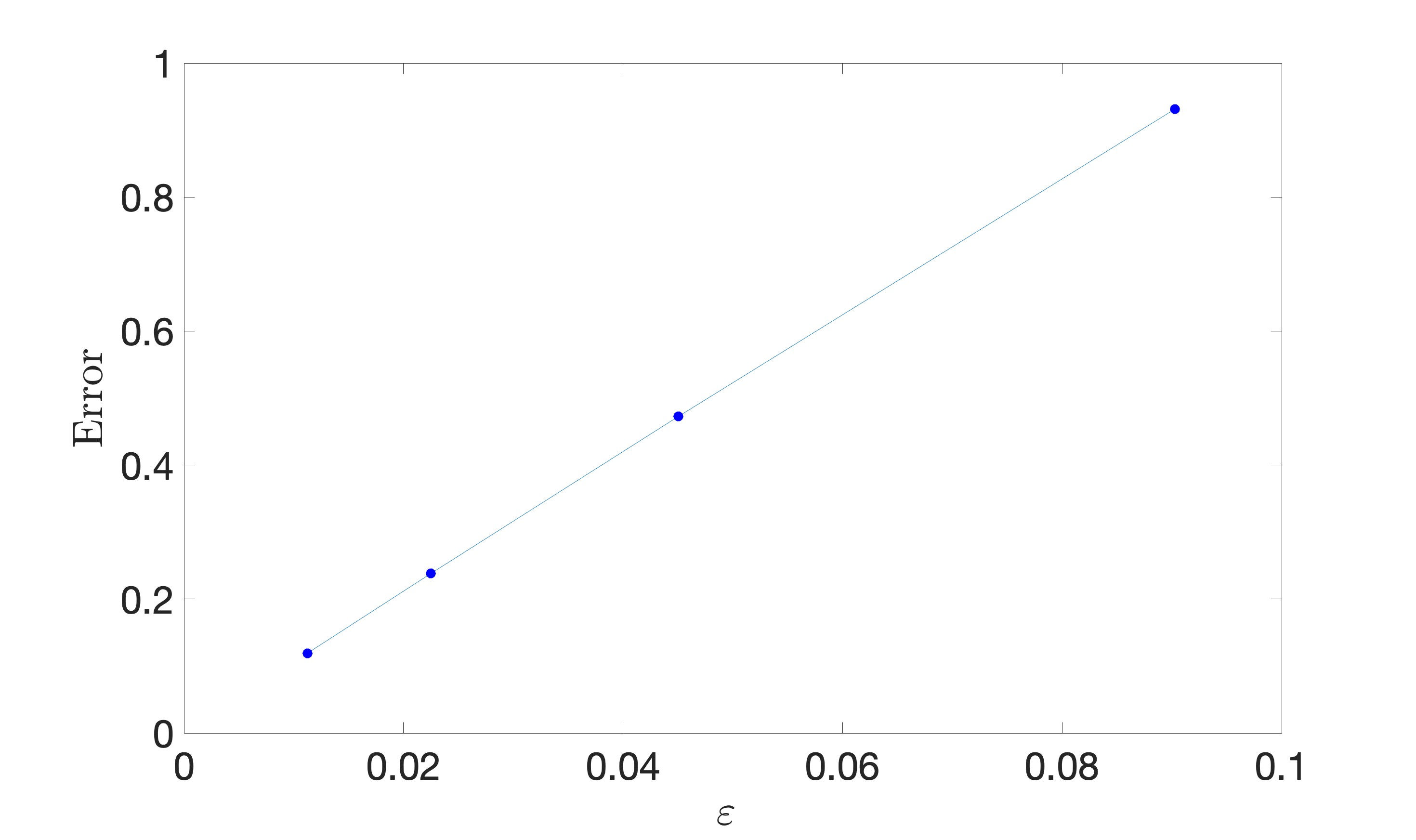}%
        \caption{Behavior of the error in the numerical solution generated by the proposed method \eqref{eq:dragphasefield} \& \eqref{eq:mobility}, on the test case of Figure \ref{fig:highmob}.}
        \label{fig:error_highmob}
        \end{figure}
    \end{minipage}
\end{center}

\clearpage

\subsection{Experiment 2: Translating solution with $m_{TJ} = 0.1$}
In the second experiment, we use a similar traveling wave solution but with a lower triple junction mobility ($m_{TJ} = 0.1$). The interfaces here are less curved than in the previous experiment.
\begin{figure}[H]
\begin{center}
        \subfloat[$n=256$]{%
            \includegraphics[width=.31\linewidth,
    trim=20cm 0cm 20cm 0cm,
    clip]{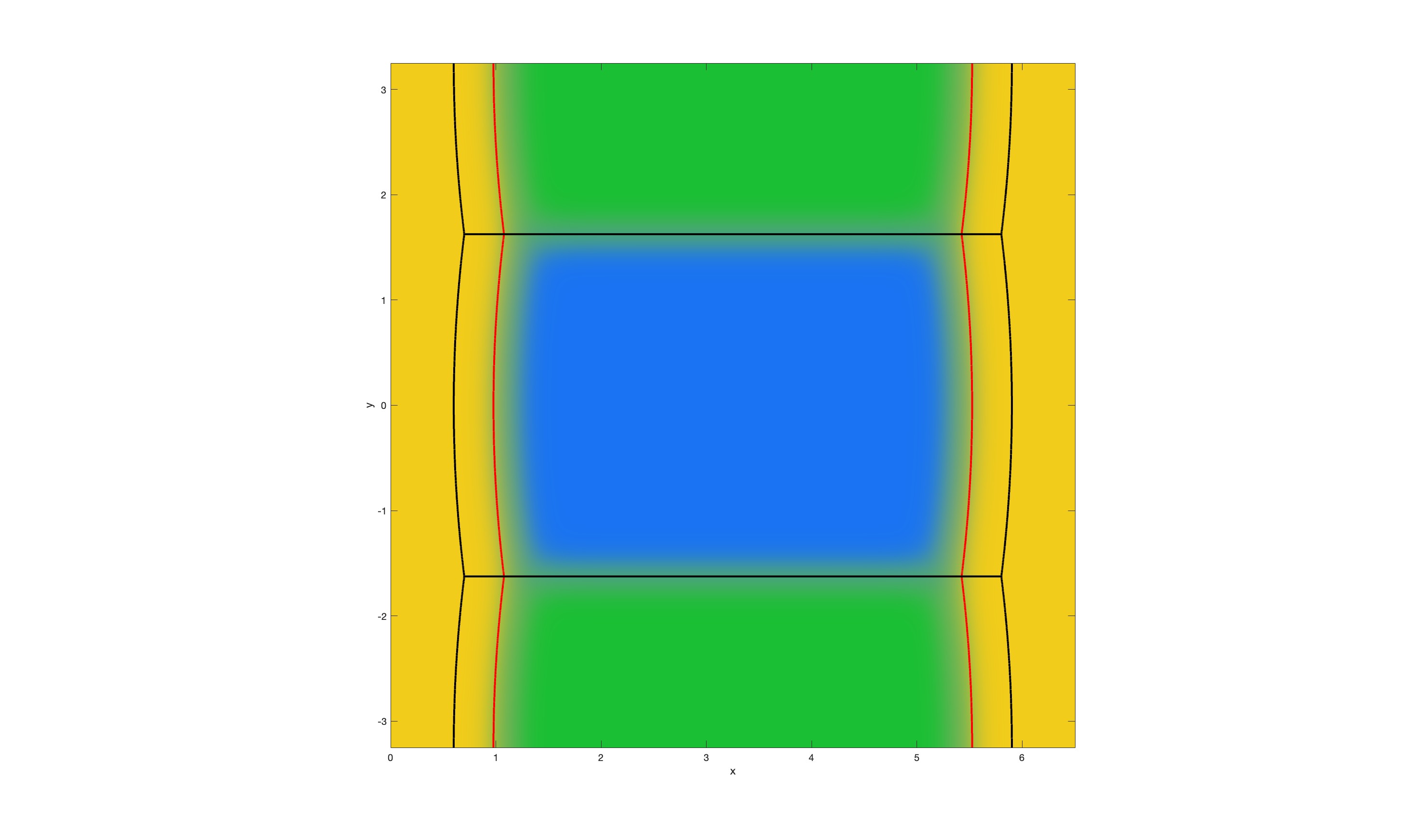}%
            \label{subfig:lowmob1}%
        }\hspace{40pt}
        \subfloat[$n=512$]{%
            \includegraphics[width=.31\linewidth,
    trim=20cm 0cm 20cm 0cm,
    clip]{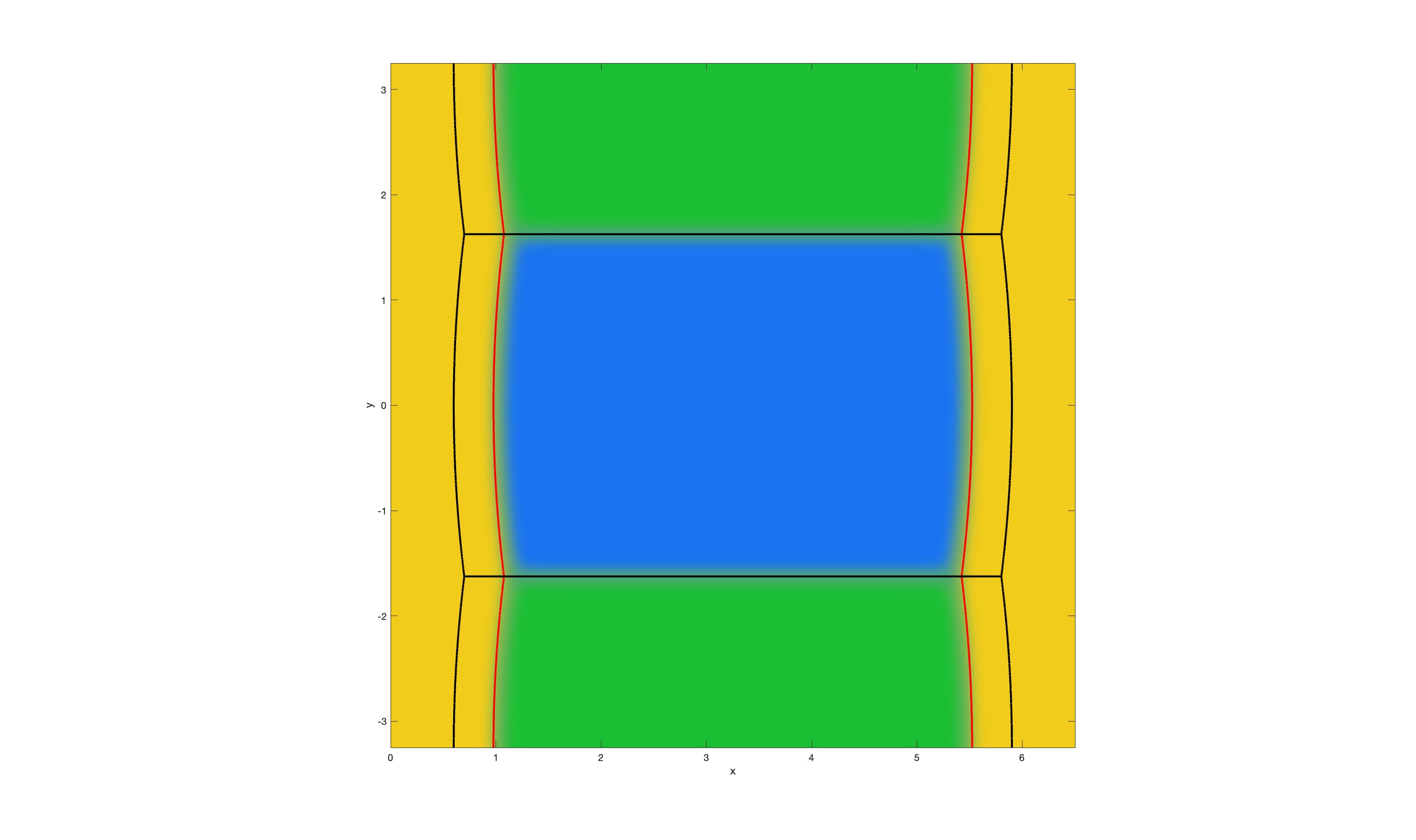}%
            \label{subfig:lowmob2}%
        }\\
        \subfloat[$n=1024$]{%
            \includegraphics[width=.31\linewidth,
    trim=20cm 0cm 20cm 0cm,
    clip]{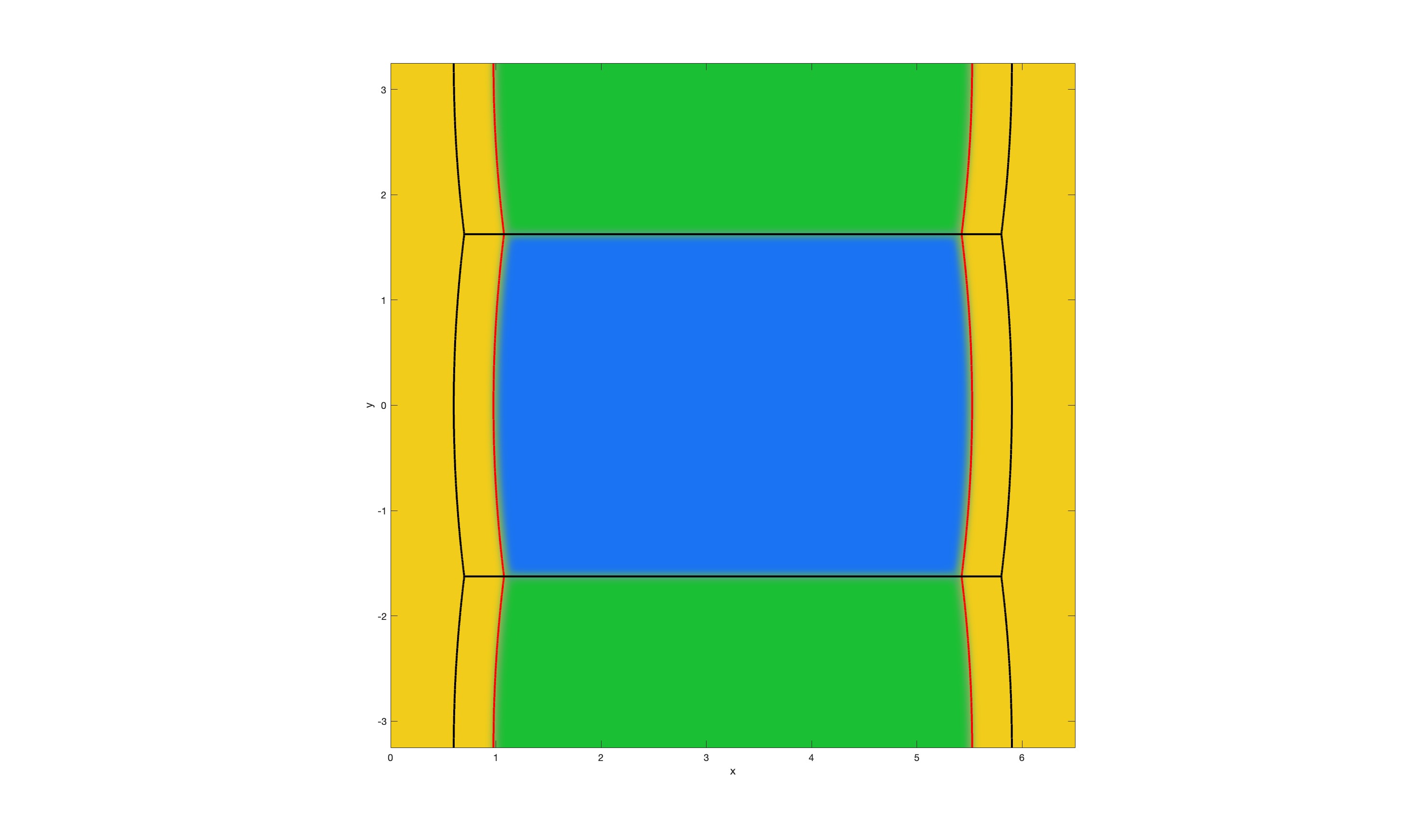}%
            \label{subfig:lowmob3}%
        }\hspace{40pt}
        \subfloat[$n=2048$]{%
            \includegraphics[width=.31\linewidth,
    trim=20cm 0cm 20cm 0cm,
    clip]{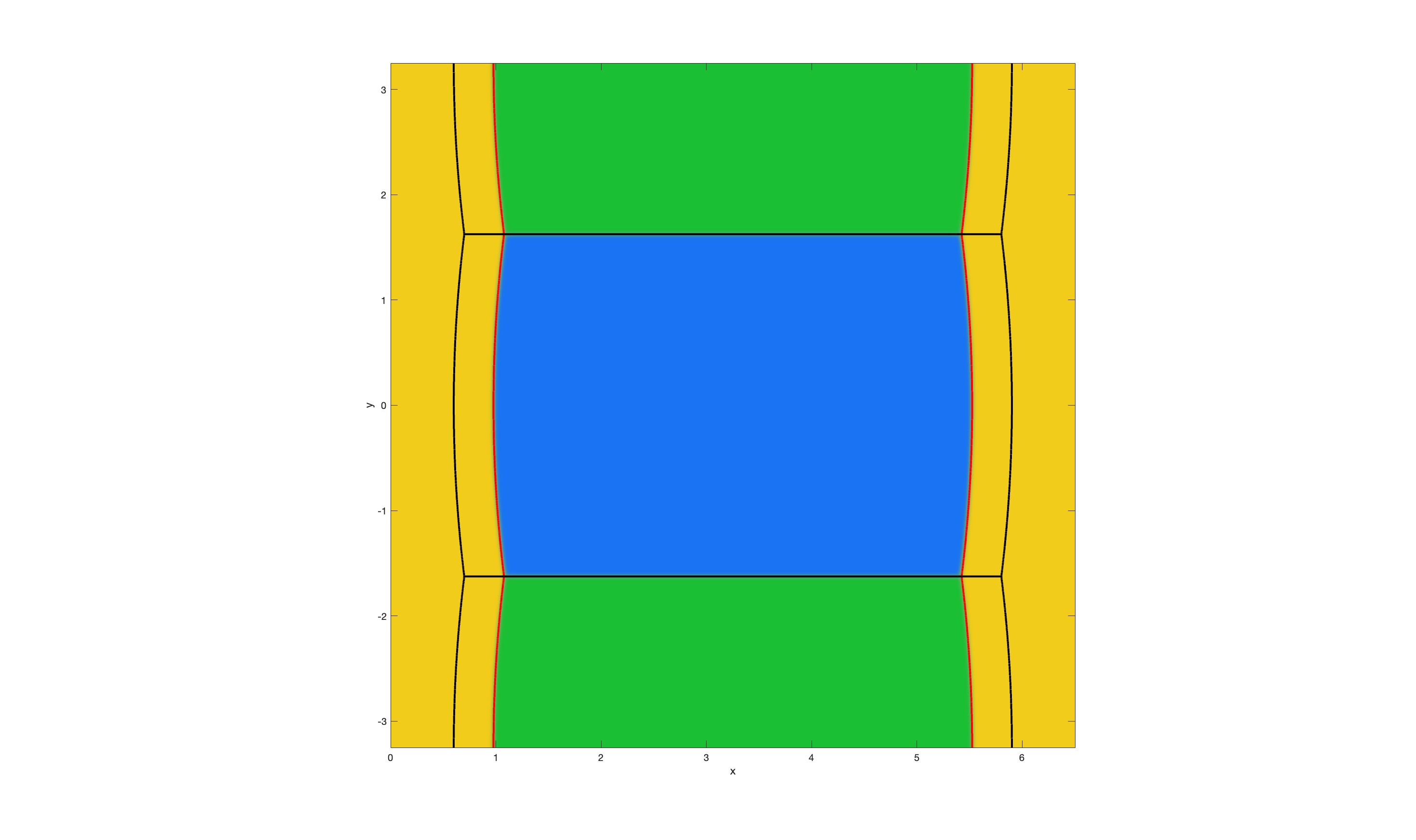}%
            \label{subfig:lowmob4}%
        }\hfill
        \caption{Convergence study to a traveling wave exact solution with $m_{TJ} = 0.1$; in this exact solution, the junction angle is a constant $\theta = 83^{\circ}$ throughout the evolution. Final time $T=5$.}
        \label{fig:lowmob}
        \end{center}
\end{figure}

\begin{center}
\begin{minipage}[t]{0.54\textwidth}
    \centering
    \vspace{30pt}
    \def\arraystretch{1.5}
    \small
    \resizebox{\linewidth}{!}{
        \begin{tabular}{|l|l|l|l|l|}
        \hline
        $\delta x$ & $\eps$ &  $\delta t$            & Error  & Order \\ \hline
        6.50/(256-1) = 0.0254    & 0.152  &  $1.30 \times 10^{-4}$ & 2.938  & -     \\ \hline
        6.50/(512-1) = 0.0127    & 0.0763  &  $3.23 \times 10^{-5}$ & 1.506  & 0.964 \\ \hline
        6.50/(1024-1) = 0.00635    & 0.0381 &  $8.07 \times 10^{-6}$ & 0.762  & 0.983  \\ \hline
        6.50/(2048-1) = 0.00317    & 0.0190 &  $2.01\times 10^{-6}$  & 0.381 & 1.000 \\ \hline
        \end{tabular}
        }
        \vspace{30pt}
        \captionof{table}{Error and order of convergence for the numerical test shown in Figure \ref{fig:lowmob}, using the proposed method \eqref{eq:dragphasefield} \& \eqref{eq:mobility}.}
        \label{tab:table_lowmob}
    \end{minipage}
    \hfill
    \begin{minipage}[t]{0.44\textwidth}
        \centering
        % \vspace{0pt}
        \begin{figure}[H]
        \centering
        \includegraphics[width=1\linewidth]{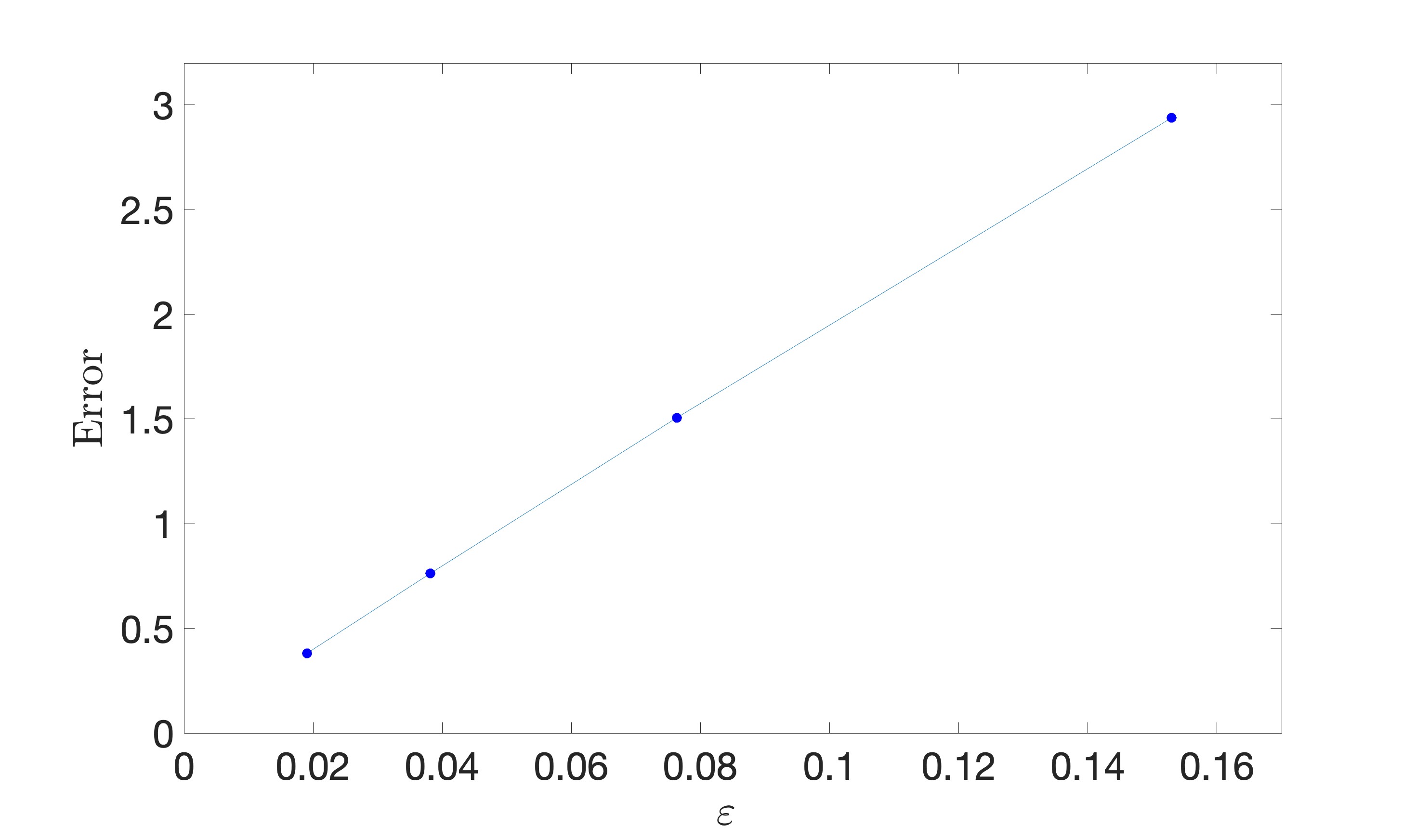}%
        %\caption{Plot of error against $\eps$}
        \caption{Behavior of the error in the numerical solution generated by the proposed method \eqref{eq:dragphasefield} \& \eqref{eq:mobility}, on the test case of Figure \ref{fig:lowmob}.}
        \label{fig:error_lowmob}
        \end{figure}
    \end{minipage}
\end{center}

\clearpage

\subsection{Experiment 3: Dynamic angles}

In this experiment, we test our phase-field method \eqref{eq:dragphasefield} \& \eqref{eq:mobility} on a configuration where the angles at the triple junction change over time. Similar to the previous experiments, the black network of curves is the initial condition. Here, the benchmark network of curves at final time, shown again in red, was computed using a front tracking method (explicit parametrization of the curves) with very fine space and time discretization.

\begin{figure}[H]
\begin{center}
        \subfloat[$n=256$]{%
            \includegraphics[width=.31\linewidth,
    trim=20cm 0cm 20cm 0cm,
    clip]{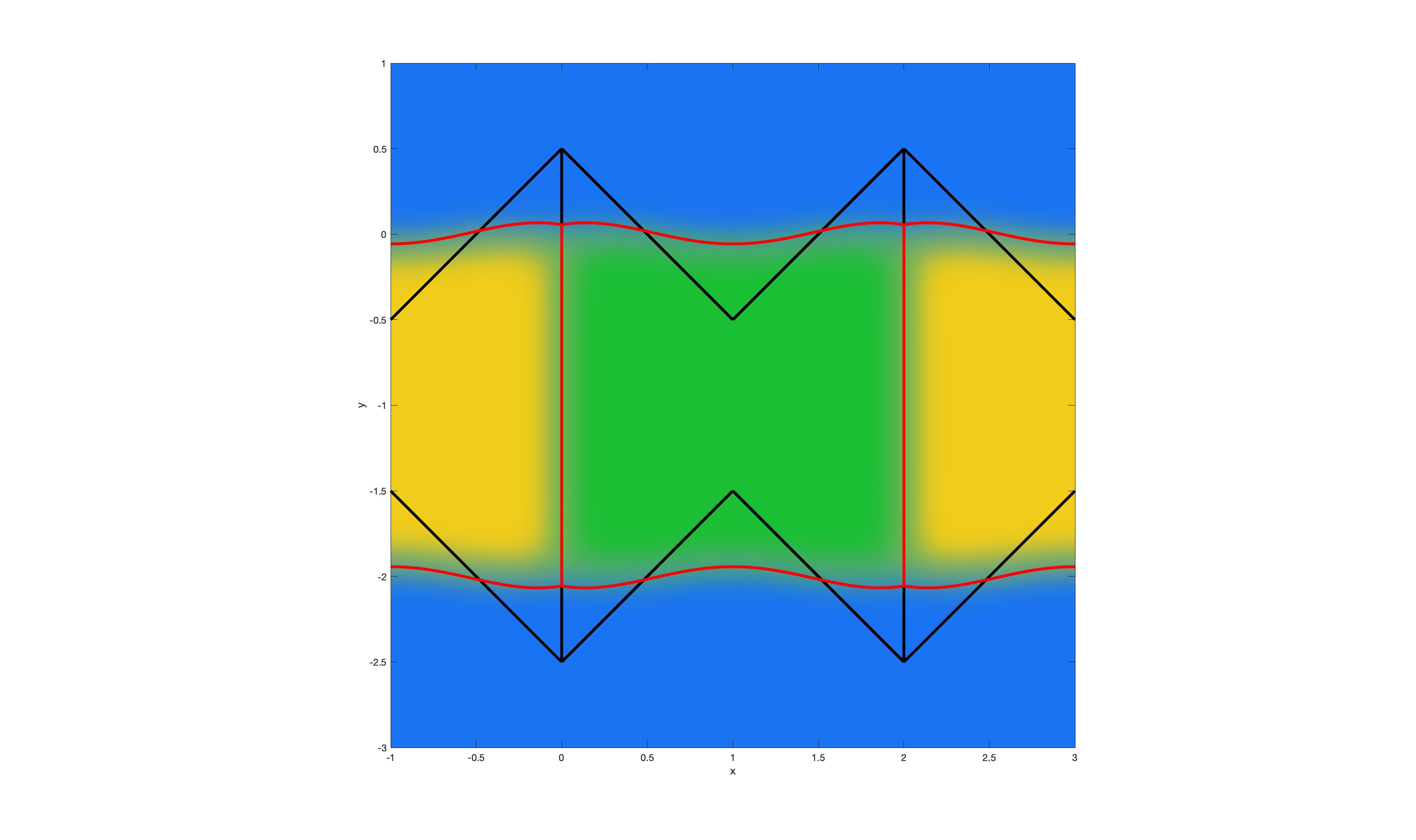}%
            \label{subfig:fork1}%
        }\hspace{40pt}
        \subfloat[$n=512$]{%
            \includegraphics[width=.31\linewidth,
    trim=20cm 0cm 20cm 0cm,
    clip]{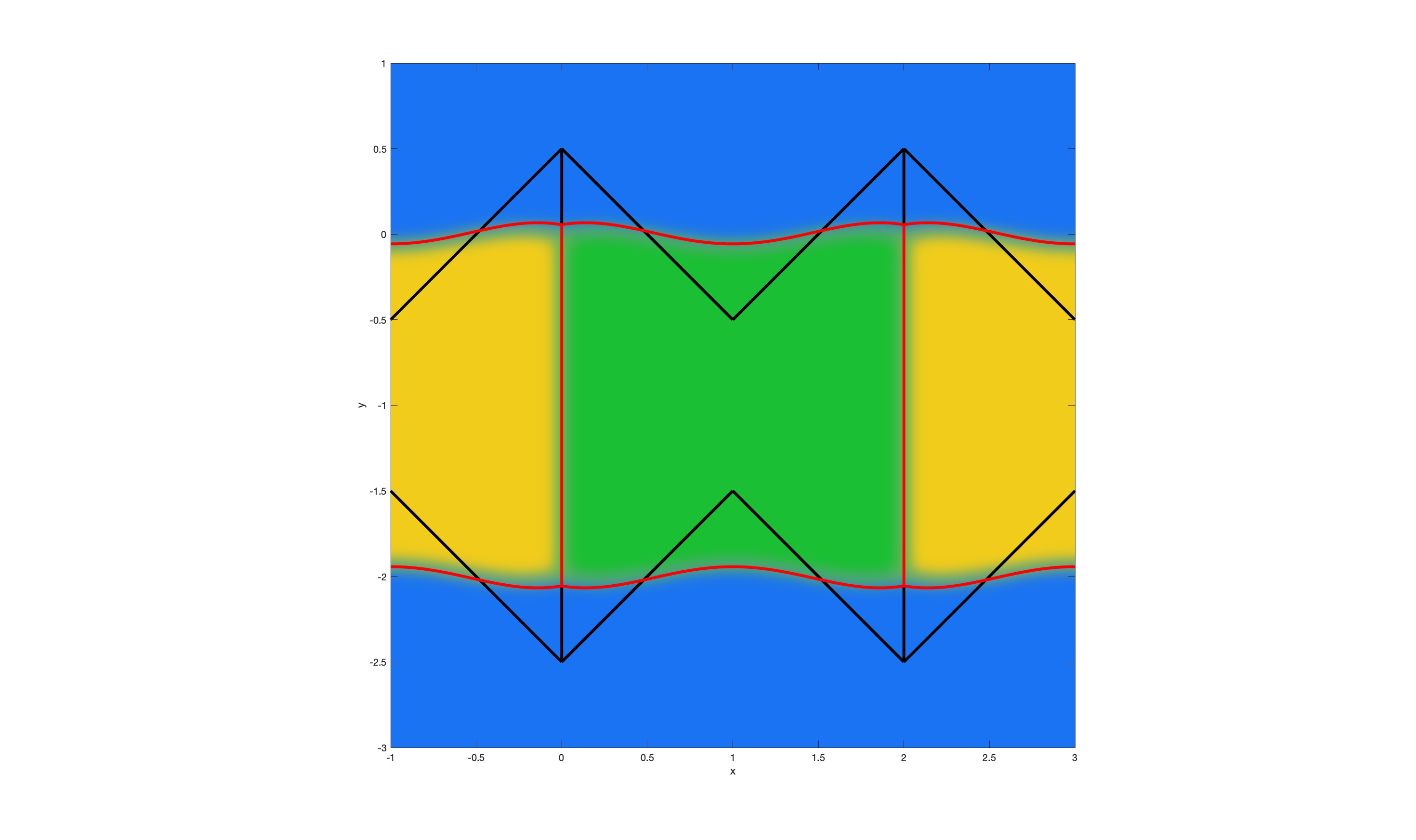}%
            \label{subfig:fork2}%
        }\\
        \subfloat[$n=1024$]{%
            \includegraphics[width=.31\linewidth,
    trim=20cm 0cm 20cm 0cm,
    clip]{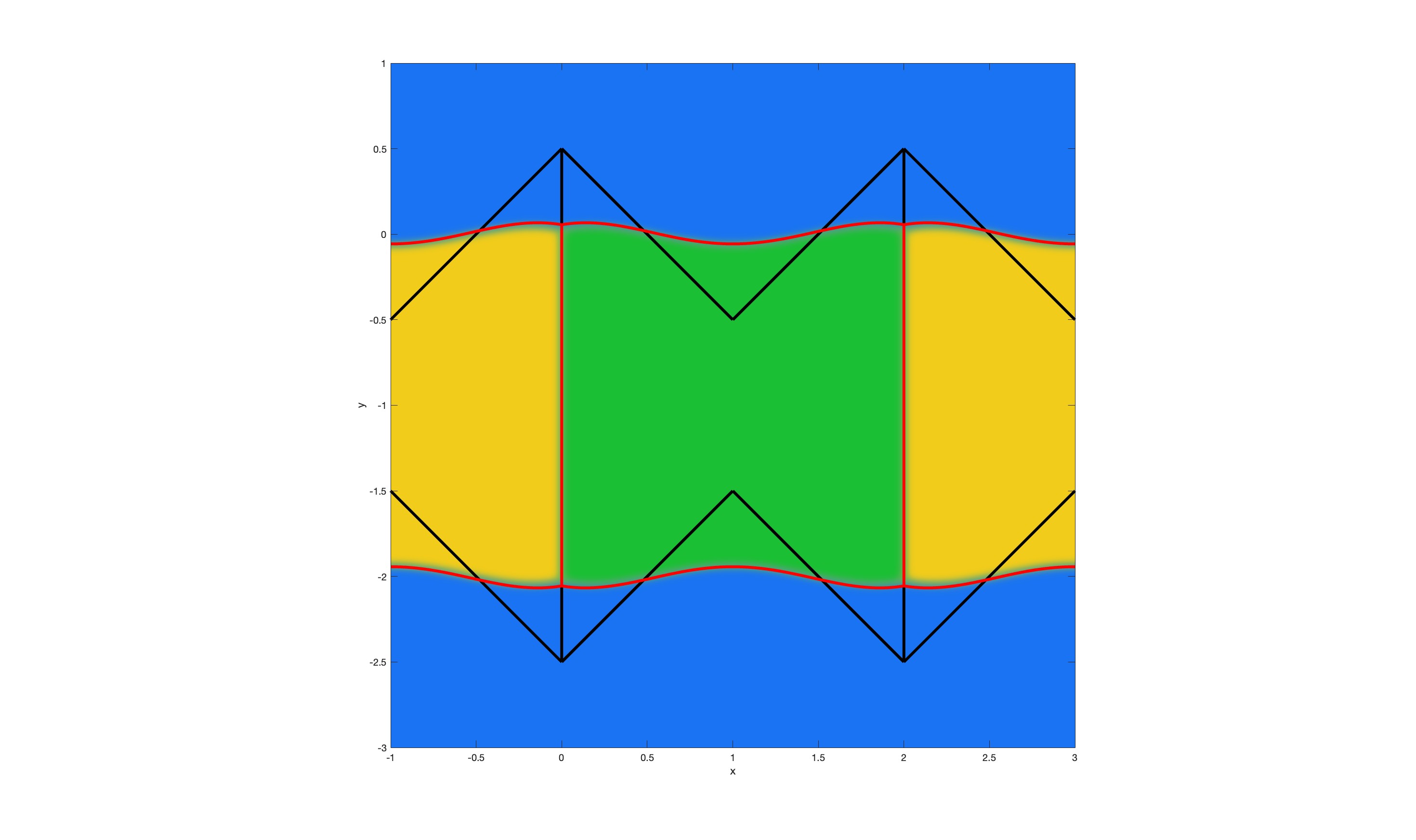}%
            \label{subfig:fork3}%
        }\hspace{40pt}
        \subfloat[$n=2048$]{%
            \includegraphics[width=.31\linewidth,
    trim=20cm 0cm 20cm 0cm,
    clip]{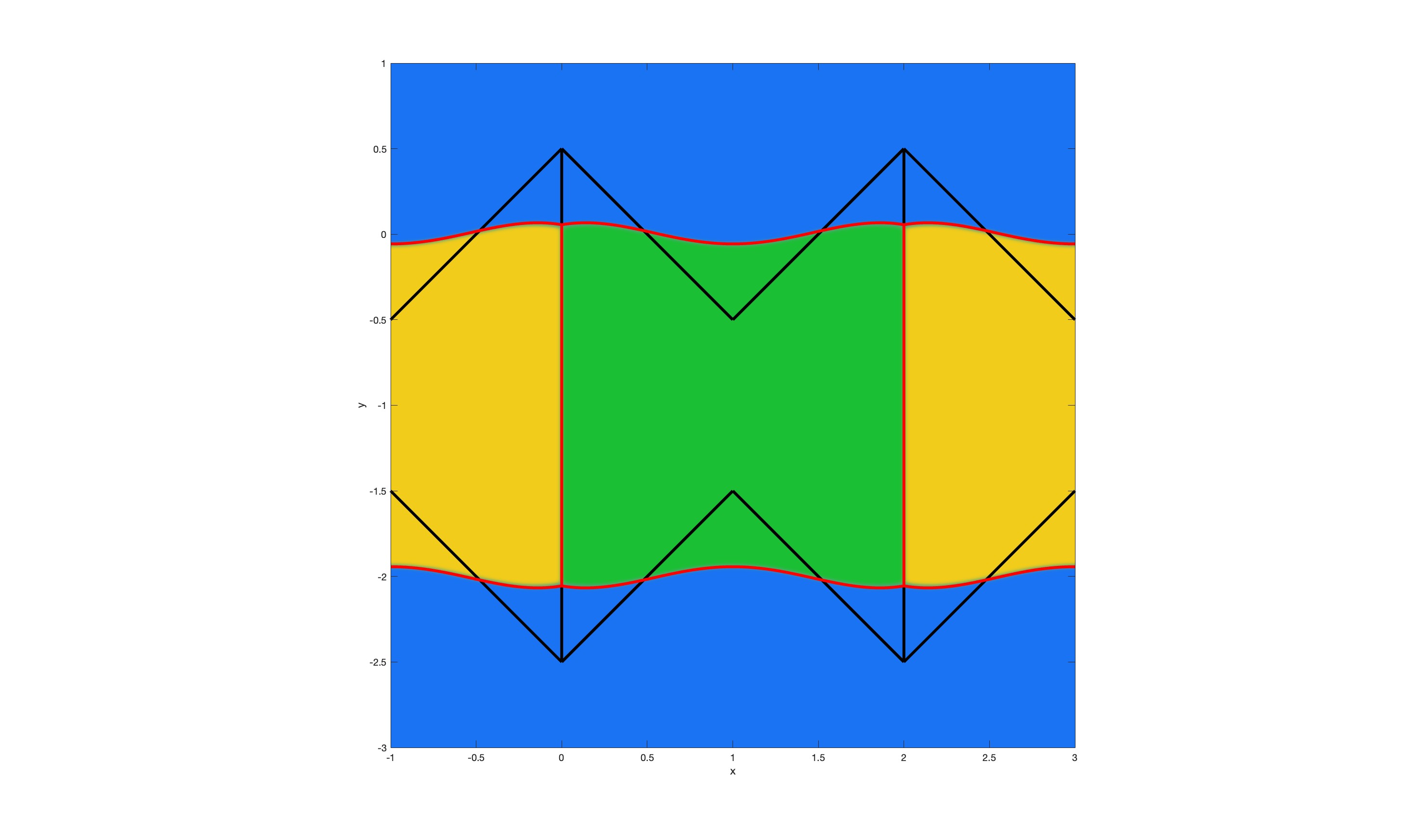}%
            \label{subfig:fork4}%
        }\hfill
        \caption{Convergence study to a configuration where the angles at the triple junction change dramatically over time. Benchmark curves (red) were obtained via a front tracking method. $m_{TJ} = 2$, final time $T=0.2$.}
        \label{fig:fork}
\end{center}
\end{figure}

\begin{center}
\begin{minipage}[t]{0.54\textwidth}
    \centering
    \vspace{30pt}
    \def\arraystretch{1.5}
    \small
    \resizebox{\linewidth}{!}{
        \begin{tabular}{|l|l|l|l|l|}
        \hline
        $\delta x$ & $\eps$ &  $\delta t$            & Error  & Order \\ \hline
        4/(256-1) = 0.0156    & 0.0941  &  $4.92 \times 10^{-5}$ & 1.010  & -     \\ \hline
        4/(512-1) = 0.00782    & 0.0469  &  $1.22 \times 10^{-5}$ & 0.520  & 0.956 \\ \hline
        4/(1024-1) = 0.00391    & 0.0234 &  $3.05 \times 10^{-6}$ & 0.264  & 0.977  \\ \hline
        4/(2048-1) = 0.00195    & 0.0117 &  $7.63\times 10^{-7}$  & 0.132 & 0.994 \\ \hline
        \end{tabular}
        }
        \vspace{30pt}
        \captionof{table}{Error and order of convergence for the numerical test shown in Figure \ref{fig:fork}, using the proposed method \eqref{eq:dragphasefield} \& \eqref{eq:mobility}.}
        \label{tab:table_fork}
    \end{minipage}
    \hfill
    \begin{minipage}[t]{0.44\textwidth}
        \centering
        \begin{figure}[H]
        \centering
        \includegraphics[width=1\linewidth]{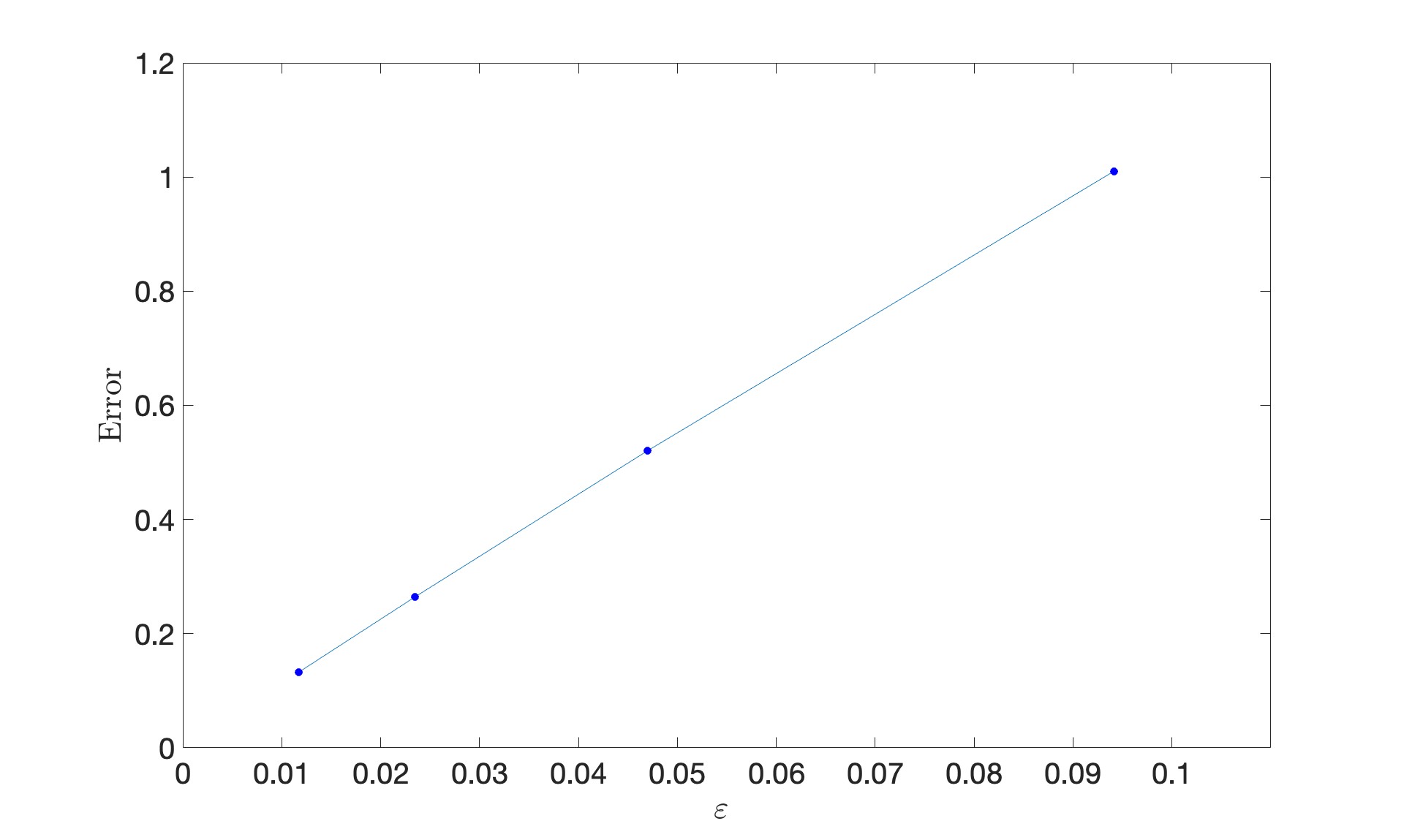}%
            \caption{Behavior of the error in the numerical solution generated by the proposed method \eqref{eq:dragphasefield} \& \eqref{eq:mobility}, on the test case of Figure \ref{fig:fork}.}
        \label{fig:error_fork}
        \end{figure}
    \end{minipage}
\end{center}

\clearpage

\subsection{Experiment 4: Asymmetric, dynamic angles}
In this final convergence study, we put our phase-field method \eqref{eq:dragphasefield} \& \eqref{eq:mobility} to an even more challenging test: a configuration where the angles at the triple junction are changing over time, and the triple junction does not follow a straight path (nor is its motion aligned with one of the interfaces,
unlike in the previous experiments). 
\begin{figure}[H]
\begin{center}
        \subfloat[$n=256$]{%
            \includegraphics[width=.31\linewidth,
    trim=20cm 0cm 20cm 0cm,
    clip]{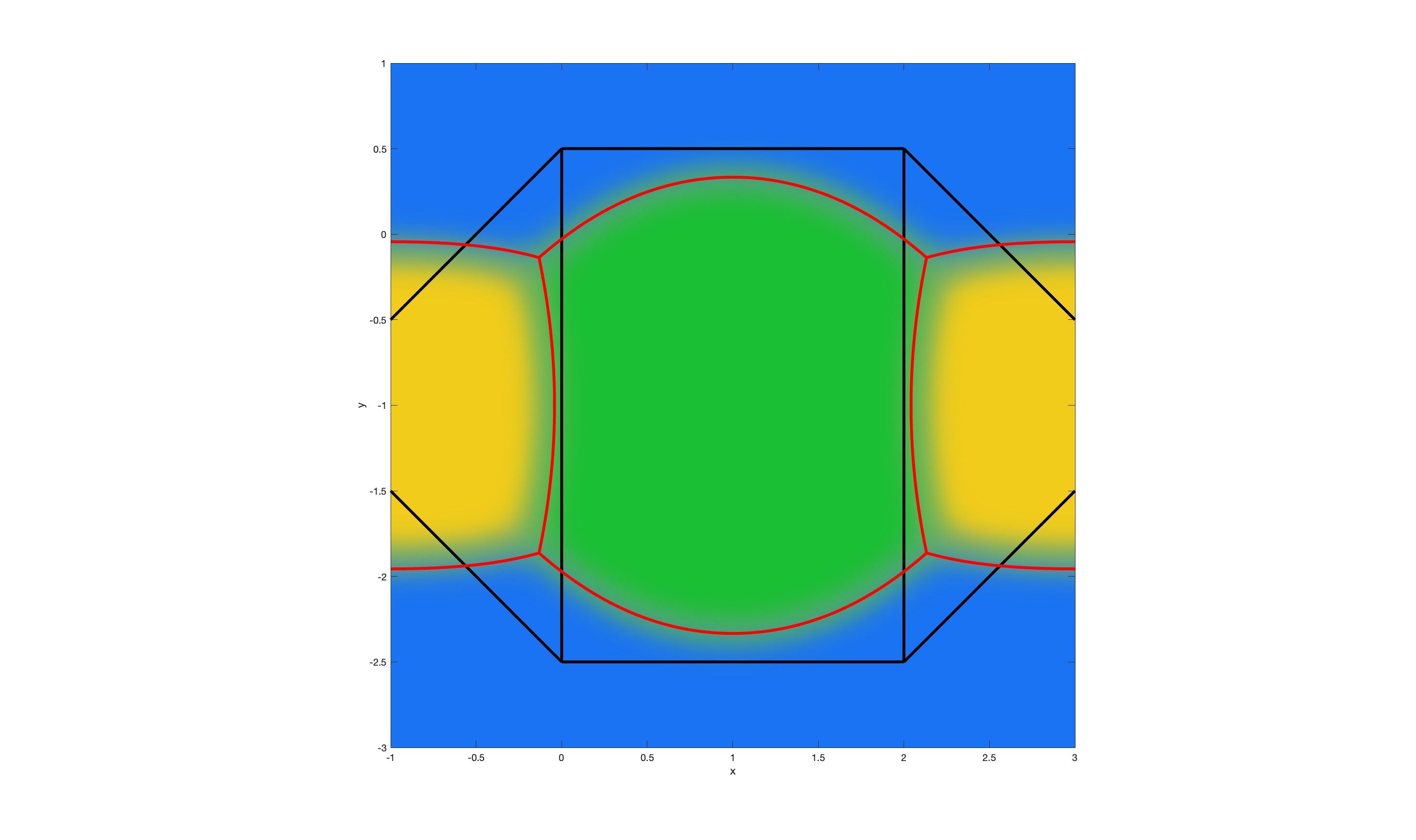}%
            \label{subfig:forkasym1}%
        }\hspace{40pt}
        \subfloat[$n=512$]{%
            \includegraphics[width=.31\linewidth,
    trim=20cm 0cm 20cm 0cm,
    clip]{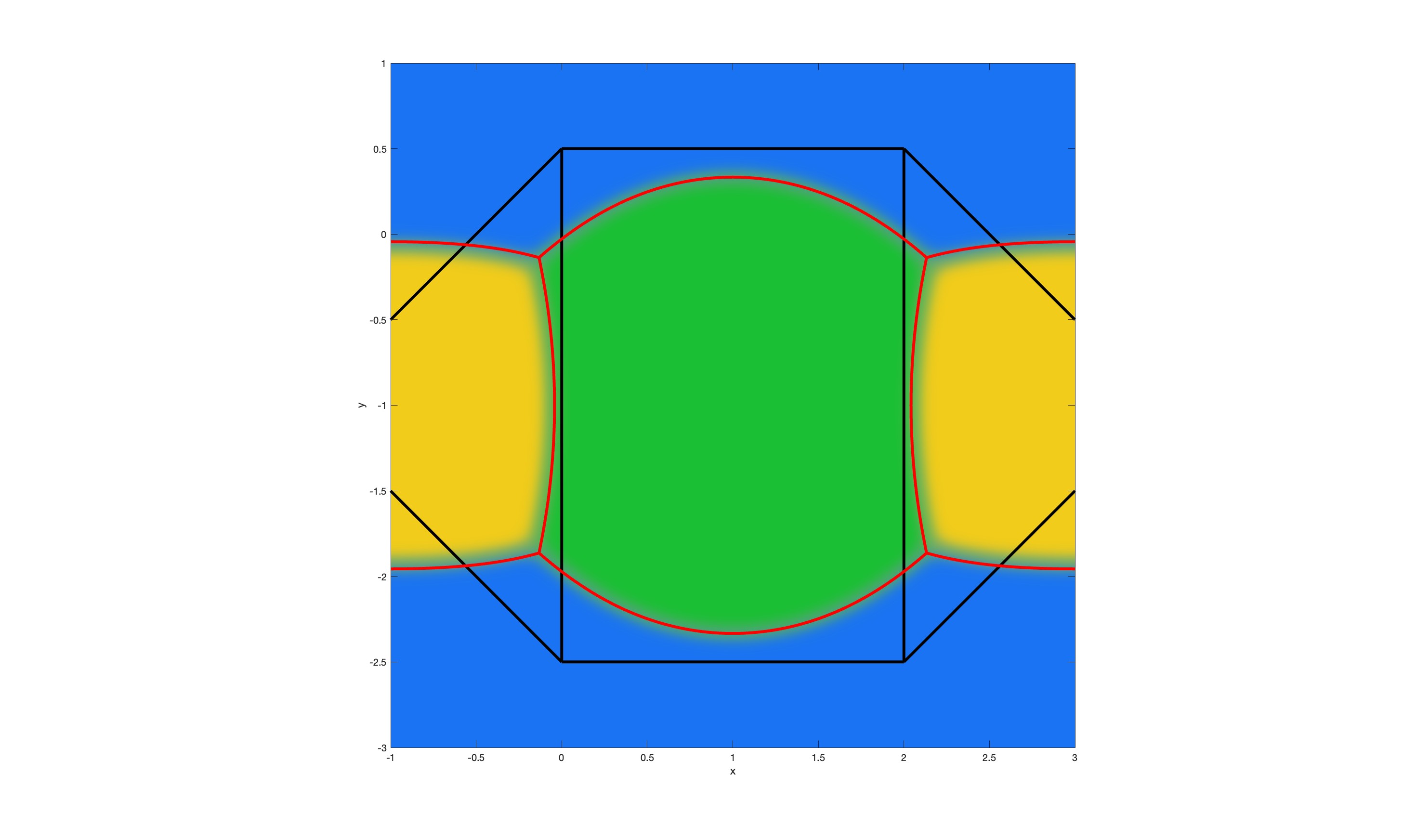}%
            \label{subfig:forkasym2}%
        }\\
        \subfloat[$n=1024$]{%
            \includegraphics[width=.31\linewidth,
    trim=20cm 0cm 20cm 0cm,
    clip]{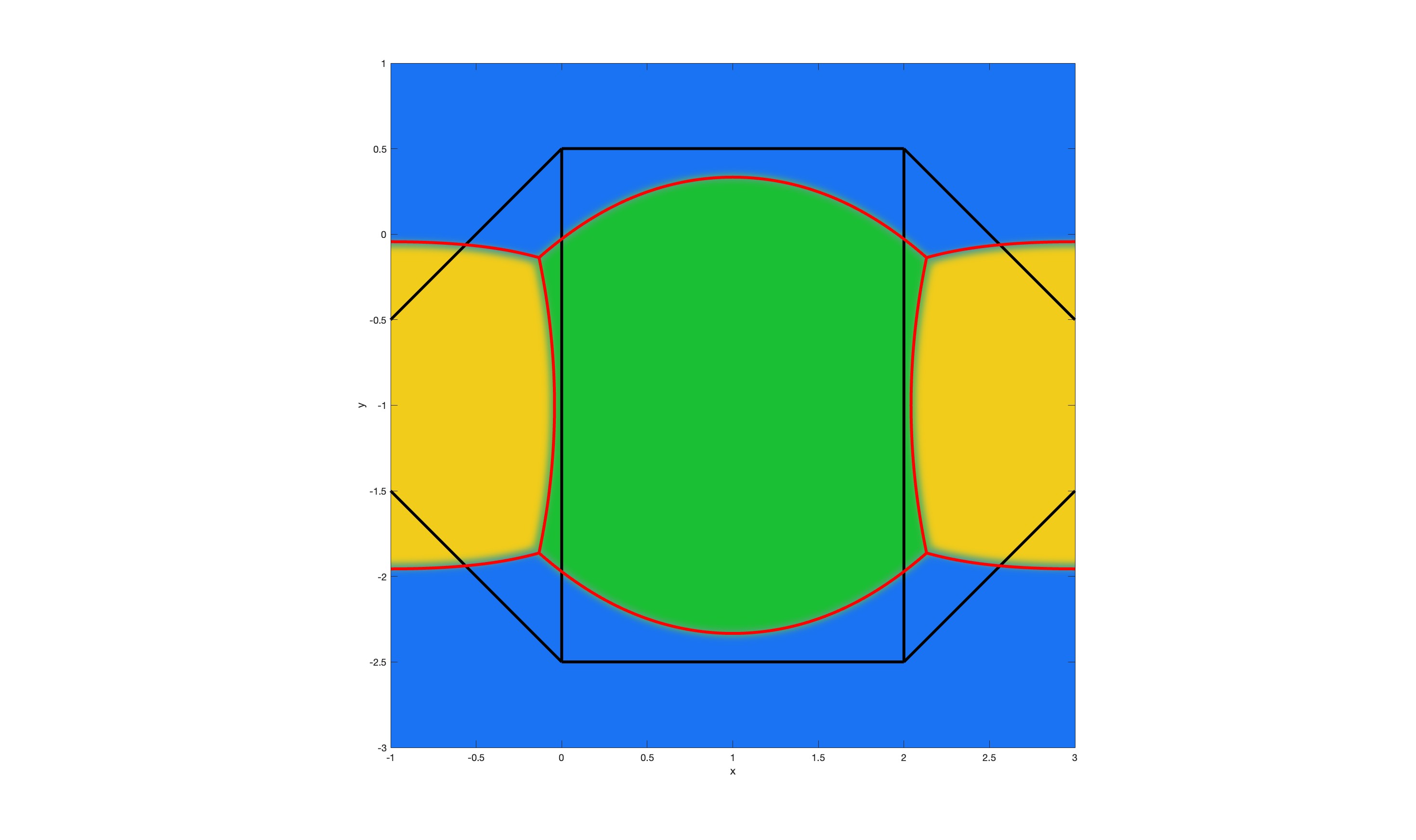}%
            \label{subfig:forkasym3}%
        }\hspace{40pt}
        \subfloat[$n=2048$]{%
            \includegraphics[width=.31\linewidth,
    trim=20cm 0cm 20cm 0cm,
    clip]{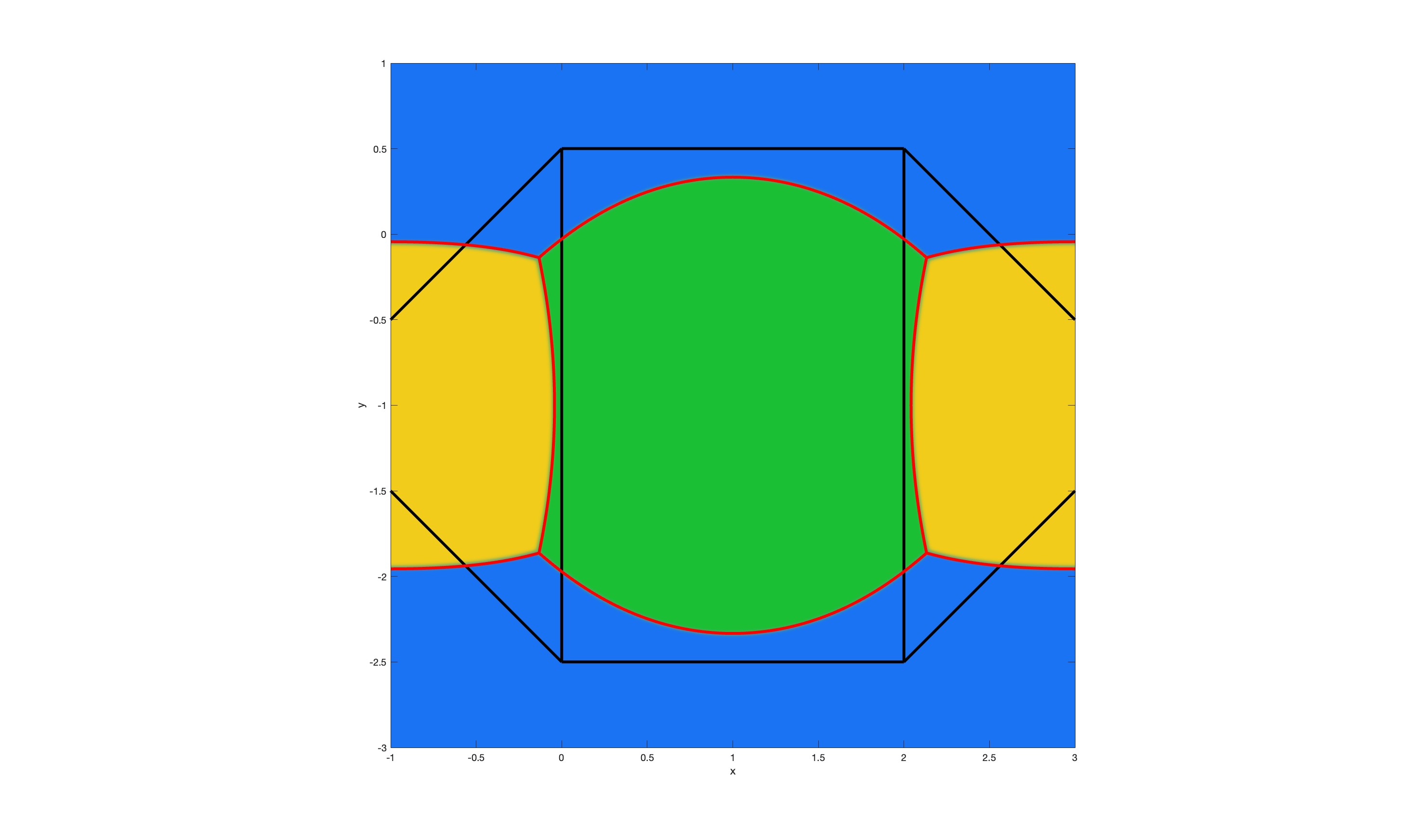}%
            \label{subfig:forkasym4}%
        }\hfill
        \caption{Convergence study to a configuration where the motion of the triple junction is not aligned with any of the interfaces. Benchmark curves (red) were obtained via a front tracking method. $m_{TJ} = 10$, final time $T=0.4$.}
        \label{fig:forkasym}
\end{center}
\end{figure}

\begin{center}
\begin{minipage}[t]{0.54\textwidth}
    \centering
    \vspace{30pt}
    \def\arraystretch{1.5}
    \small
    \resizebox{\linewidth}{!}{
        \begin{tabular}{|l|l|l|l|l|}
        \hline
        $\delta x$ & $\eps$ &  $\delta t$            & Error  & Order \\ \hline
        4/(256-1) = 0.0156    & 0.0941  &  $4.92 \times 10^{-5}$ & 1.018  & -     \\ \hline
        4/(512-1) = 0.00782    & 0.0469  &  $1.22 \times 10^{-5}$ & 0.515  & 0.981 \\ \hline
        4/(1024-1) = 0.00391    & 0.0234 &  $3.05 \times 10^{-6}$ & 0.259  & 0.990  \\ \hline
        4/(2048-1) = 0.00195    & 0.0117 &  $7.63\times 10^{-7}$  & 0.130 & 0.995 \\ \hline
        \end{tabular}
        }
        \vspace{30pt}
        \captionof{table}{Error and order of convergence for the numerical test shown in Figure \ref{fig:forkasym}, using the proposed method \eqref{eq:dragphasefield} \& \eqref{eq:mobility}.}
        \label{tab:table_forkasym}
    \end{minipage}
    \hfill
    \begin{minipage}[t]{0.44\textwidth}
        \centering
        \begin{figure}[H]
            \centering
            \includegraphics[width=1\linewidth]{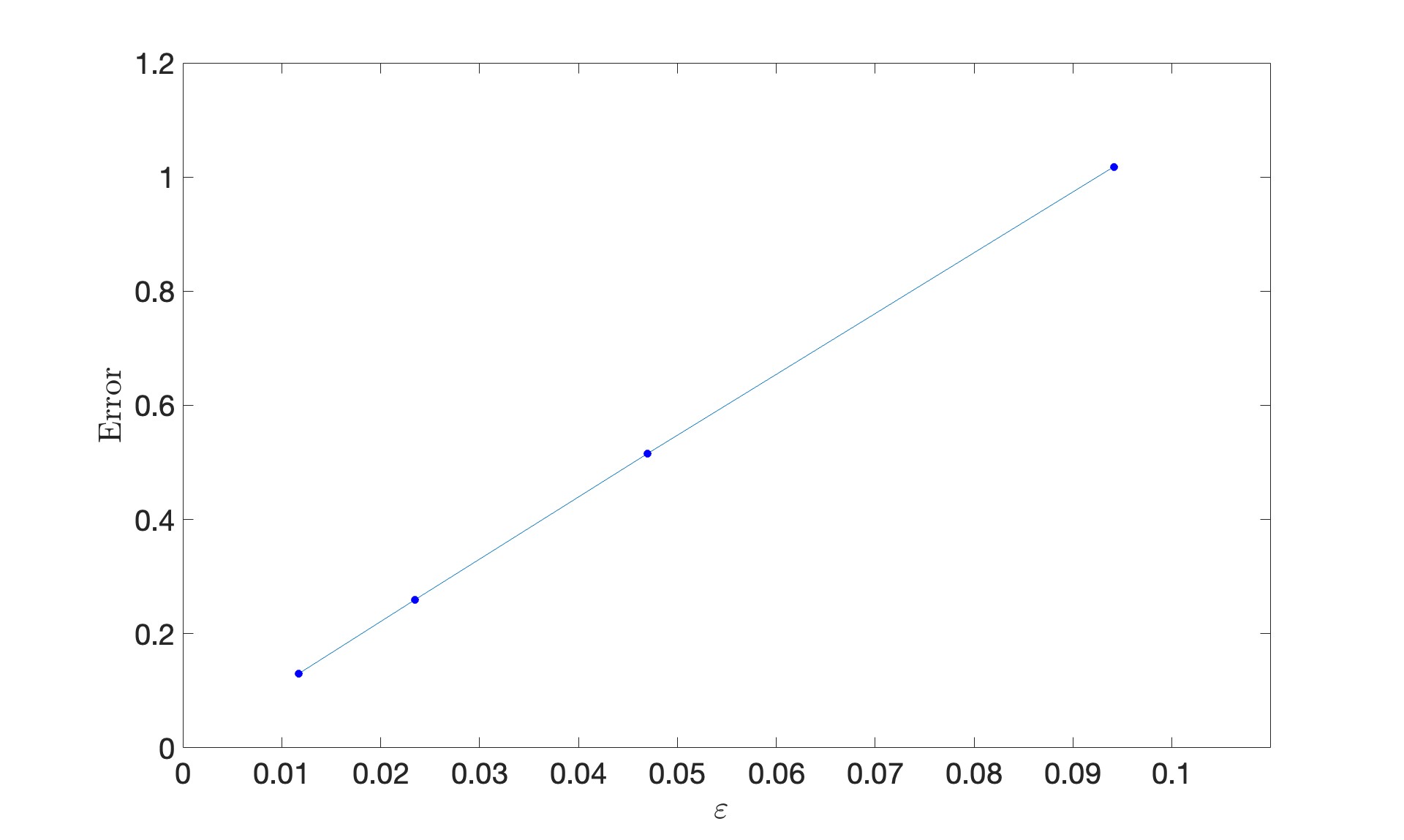}%
            \caption{Behavior of the error in the numerical solution generated by the proposed method \eqref{eq:dragphasefield} \& \eqref{eq:mobility}, on the test case of Figure \ref{fig:forkasym}.}
            \label{fig:error_forkasym}
        \end{figure}
    \end{minipage}
\end{center}

\clearpage

\subsection{The Jacobian determinant under triple junction drag}

In the following simulation, we demonstrate that the Jacobian determinant \eqref{eq:jacobian} of the solution $u(x,t)$ to \eqref{eq:dragphasefield} \& \eqref{eq:mobility} is a robust triple junction detector even in the presence of triple junction drag (i.e. when there are multiple triple junctions with different time-varying angle configurations). 
Moreover, this simulation verifies that our phase-field method can handle topological changes seamlessly, as expected.
Here, we choose $m_{TJ} = 5$. The evolution through multiple critical events are shown in Figures \ref{fig:drag1} through \ref{fig:drag3}.
Figure \ref{fig:jacobiandrag} shows that $N_{TJ}(t)$ as defined in \eqref{eq:ntj} is indeed approximately integer valued at non-critical times, providing clear evidence for the independence of \eqref{eq:jacobian} from junction geometry.

\begin{figure}[H]
    \centering        \includegraphics[width=.7\linewidth]{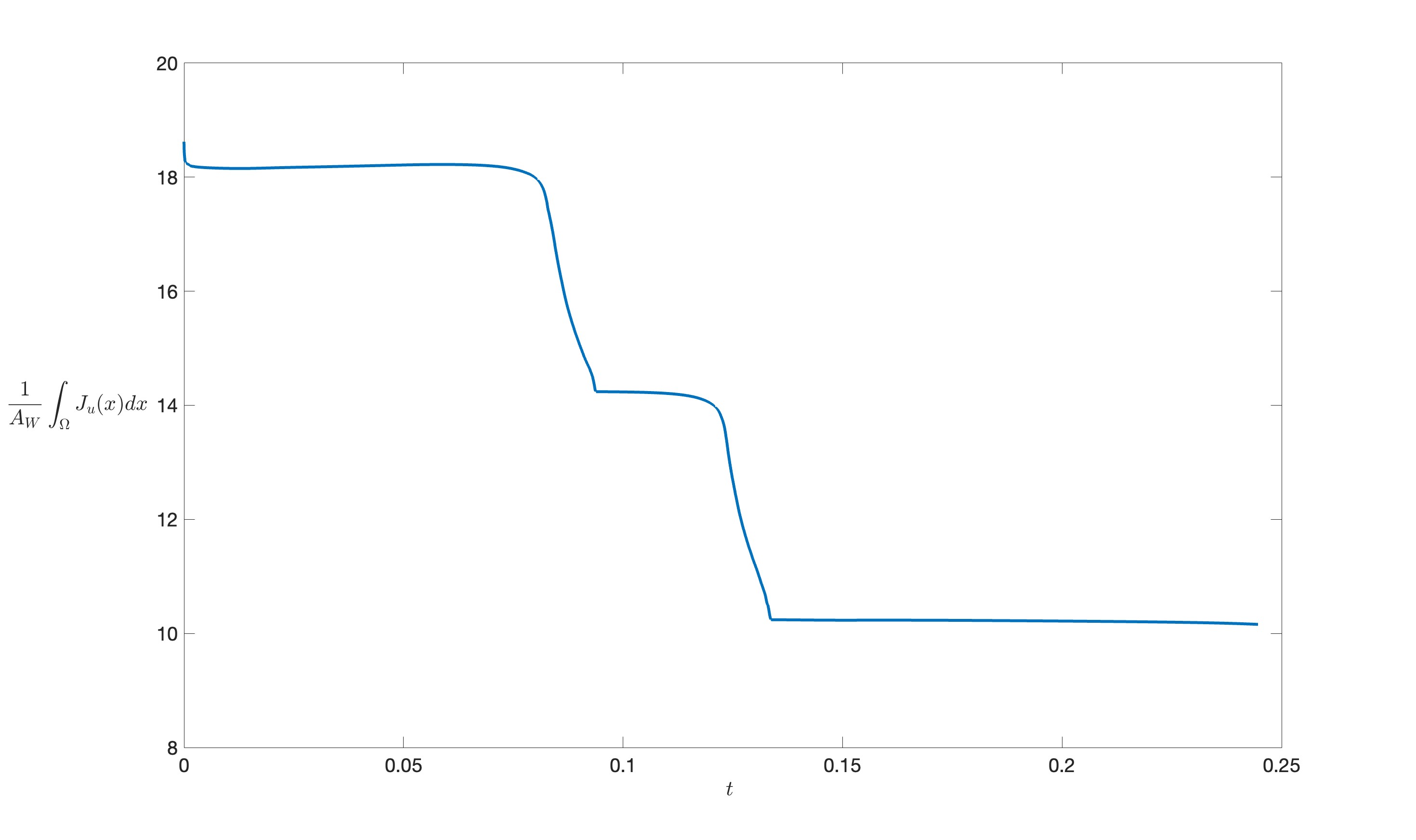}%
        \caption{A plot of the quantity $N_{TJ}(t) = \frac{1}{A_{W}}\int_\Omega J_u(x) \;dx$ against time $t$, where $u(x,t)$ is a solution to the new phase-field method \eqref{eq:dragphasefield} \& \eqref{eq:mobility}.}
        \label{fig:jacobiandrag}
\end{figure}

\begin{figure}[H]
    \centering        \includegraphics[width=.9\linewidth]{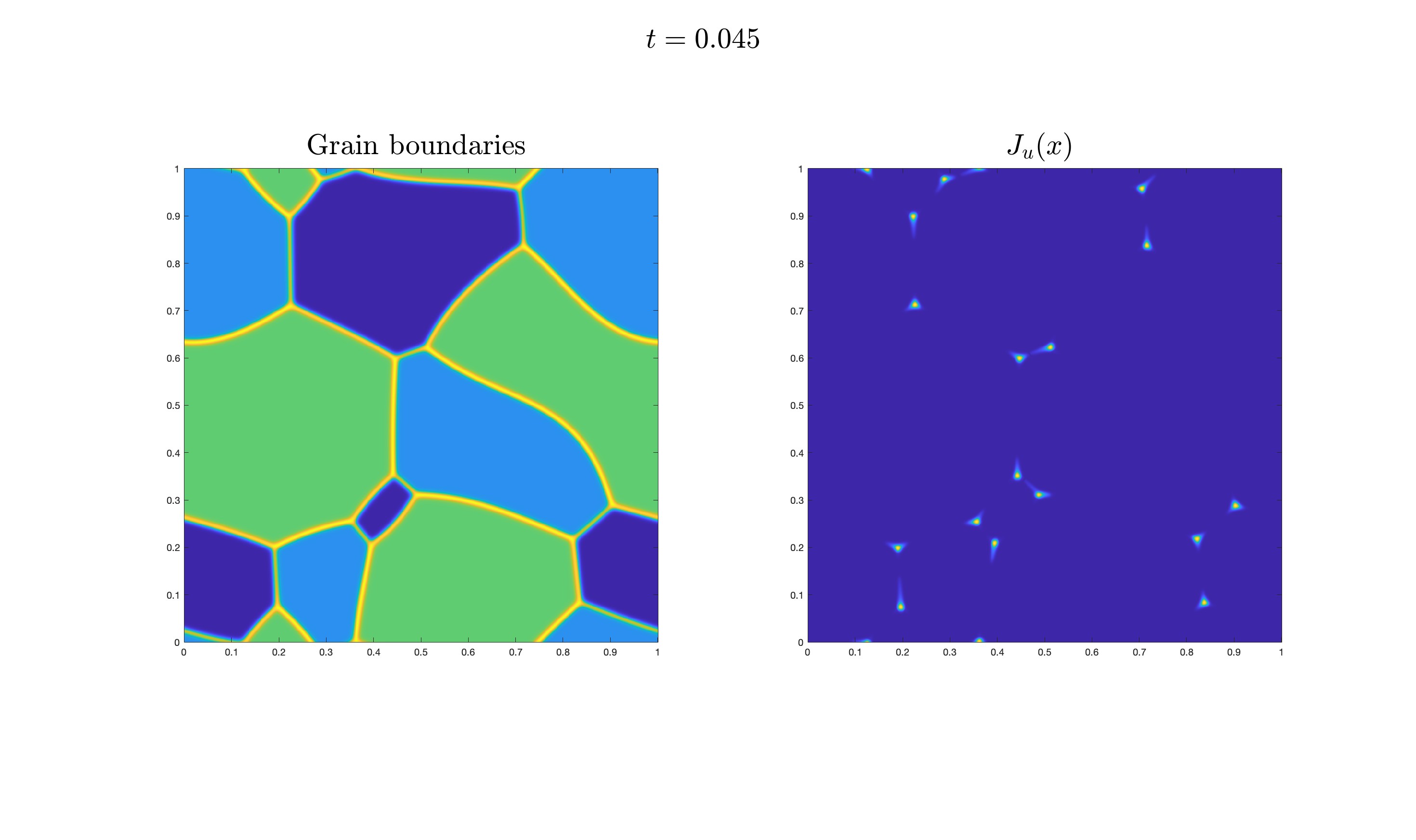}%
        \caption{Evolution of grain boundaries with triple junction drag, computed using the proposed method \eqref{eq:dragphasefield} \& \eqref{eq:mobility}.
        There are three phases, with 18 junctions at time 
$t=0.045$. Junction angles change, yet the mass of $J_u$ near each junction remains essentially constant in time.}
        \label{fig:drag1}
\end{figure}

\begin{figure}[H]
    \centering        \includegraphics[width=.9\linewidth]{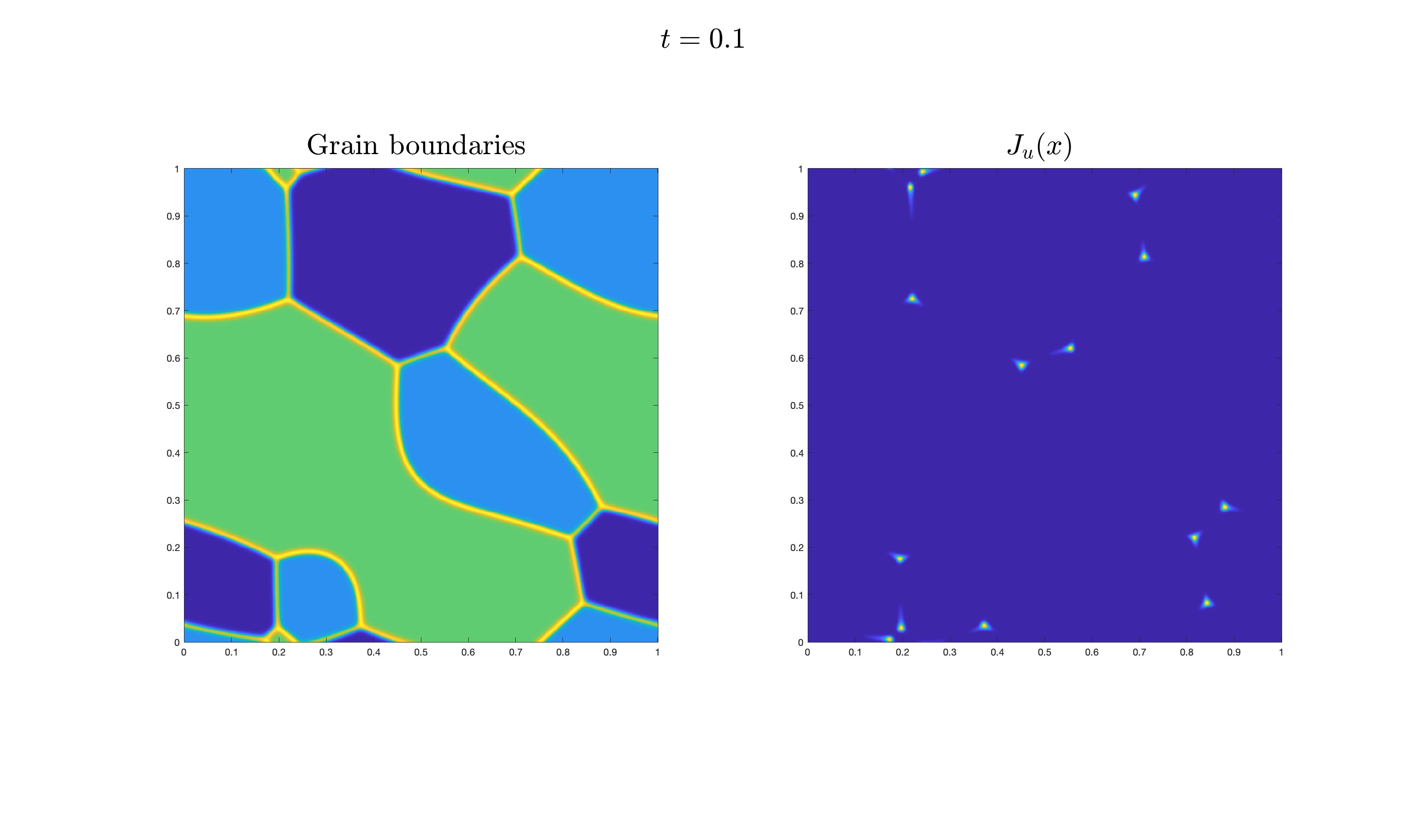}%
        \caption{Further evolution, at time $t=0.1$, of the experiment of Figure \ref{fig:drag1}, under the proposed method \eqref{eq:dragphasefield} \& \eqref{eq:mobility}. Many topological changes have taken place; $14$ junctions remain.}
        \label{fig:drag2}
\end{figure}

\begin{figure}[H]
    \centering        \includegraphics[width=.9\linewidth]{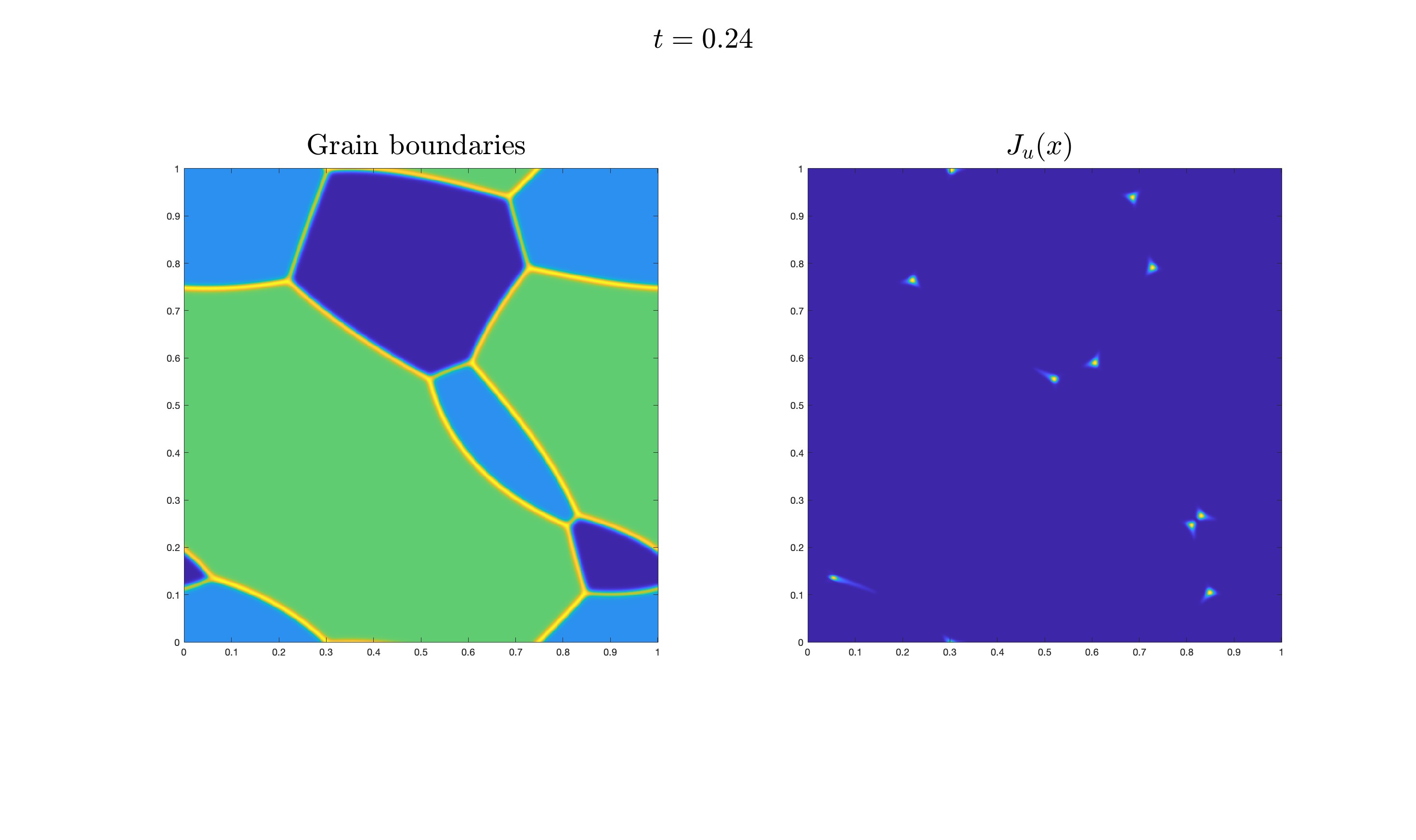}%
        \caption{Even further evolution of the experiment of Figures \ref{fig:drag1} and \ref{fig:drag2}, now at time $t=0.24$. $10$ triple junctions remain. Although $J_u$ is smeared in complicated ways at junctions with acute angles, its mass near each junction remains very closely the same, as intended.}
        \label{fig:drag3}
\end{figure}

\section{Generalization to $N\geq 4$ phases}
\label{sec:Nphases}

Our PDE \eqref{eq:dragphasefield} \& \eqref{eq:mobility} extends to $N\geq 4$ phases naturally.
In this case, as in \cite{BALDO199067}, the order parameter $u(x,t)$ takes values in $\R^{N-1}$ and the potential $W:\R^{N-1} \to \R$ has $N$ wells located at vertices of a regular $(N-1)-$simplex.
The mobility factor \eqref{eq:mobility} -- the novelty of our method -- becomes
\begin{equation}
\label{eq:mobility4}
    M(\nabla u) \coloneqq I_{(N-1)\times (N-1)} - \frac{1}{A_W}J_u\nabla u \bigg[\eps m_{TJ}\bigg(\nabla u^\top \nabla u + I_{2\times 2} \bigg)^2 + \frac{1}{A_W}J_u \nabla u^\top \nabla u \bigg]^{-1}\nabla u^\top.
\end{equation}

We give an example with $N=4$ phases:
\begin{equation}
    \begin{split}
        W(x) &= C_\beta |x-\beta_1|^2 ||x-\beta_2|^2|x-\beta_3|^2|x-\beta_4|^2\\
        \beta_1 &= \bigg(\frac{2}{\sqrt{3}} , 0 , -\frac{1}{2}\sqrt{\frac{2}{3}}\bigg)\\
        \beta_2 &= \bigg( -\frac{1}{\sqrt{3}} , 1 , -\frac{1}{2}\sqrt{\frac{2}{3}}\bigg)\\
        \beta_3 &= \bigg( -\frac{1}{\sqrt{3}} , -1 , -\frac{1}{2}\sqrt{\frac{2}{3}}\bigg)\\
        \beta_4 &= \bigg( 0 , 0 , \frac{3}{2}\sqrt{\frac{2}{3}}\bigg)\\
        C_\beta &\approx 0.0322.
    \end{split}
\end{equation}
As before, the constant $C_\beta$ was computed numerically to ensure that the surface tensions induced by $W$ according to \eqref{eq:distance} are all equal to one.

A key question in the $N\geq 4$ case is the validity of identity \eqref{eq:changeofvar}, since (in general) there could be infinitely many 2-dimensional curved surfaces that share the same boundary (geodesics between any three wells of $W$).
It is conceivable that this surface, namely $u(B_\varepsilon,t)$, changes with the profile of the order parameter $u(\cdot,t)$ in the vicinity $B_\varepsilon$ of a junction the angles of which evolves in time.
This raises the question of whether the quantity $\int_{B_\eps} J_u (x) \;dx$ remains a constant depending only on $W$, and if so, what that constant is. 

Our numerical simulations clearly indicate that the quantity $\int_{B_\eps} J_u (x) \;dx$ is indeed very nearly constant during the evolution by \eqref{eq:dragphasefield} \& \eqref{eq:mobility4}.
Moreover, we find that the image $u(B_\varepsilon,t)$ of $B_\eps$ (a small neighborhood containing a triple junction where phases $i,j,k$ meet) under $u(\cdot,t)$ very closely matches  the surface $\mathcal{S}_{ijk} := u_*(\mathbb{R}^2)$, where $u_*$ solves the equilibrium problem 
\begin{equation}
\label{eq:eqsol}
    \Delta u_* (x) = \nabla W(u_* (x)), \qquad x\in\R^2
\end{equation}
subject to far field conditions given by equations 16-18 of \cite{bronsardreitich} (with wells $\beta_i,\beta_j,\beta_k$), i.e. 
\begin{equation}\label{eq:conjecture}
    u(B_\eps,t) \approx u_*(\R^2) = \mathcal{S}_{ijk}
\end{equation}
regardless of the local geometry of the junction; see Figures \ref{fig:drag_evolution} and \ref{fig:drag_surface}.

\begin{figure}[H]
        %\subfloat[Initial configuration for \eqref{eq:dragphasefield} with $m_{TJ}=2$]{%
            \includegraphics[width=.5\linewidth]{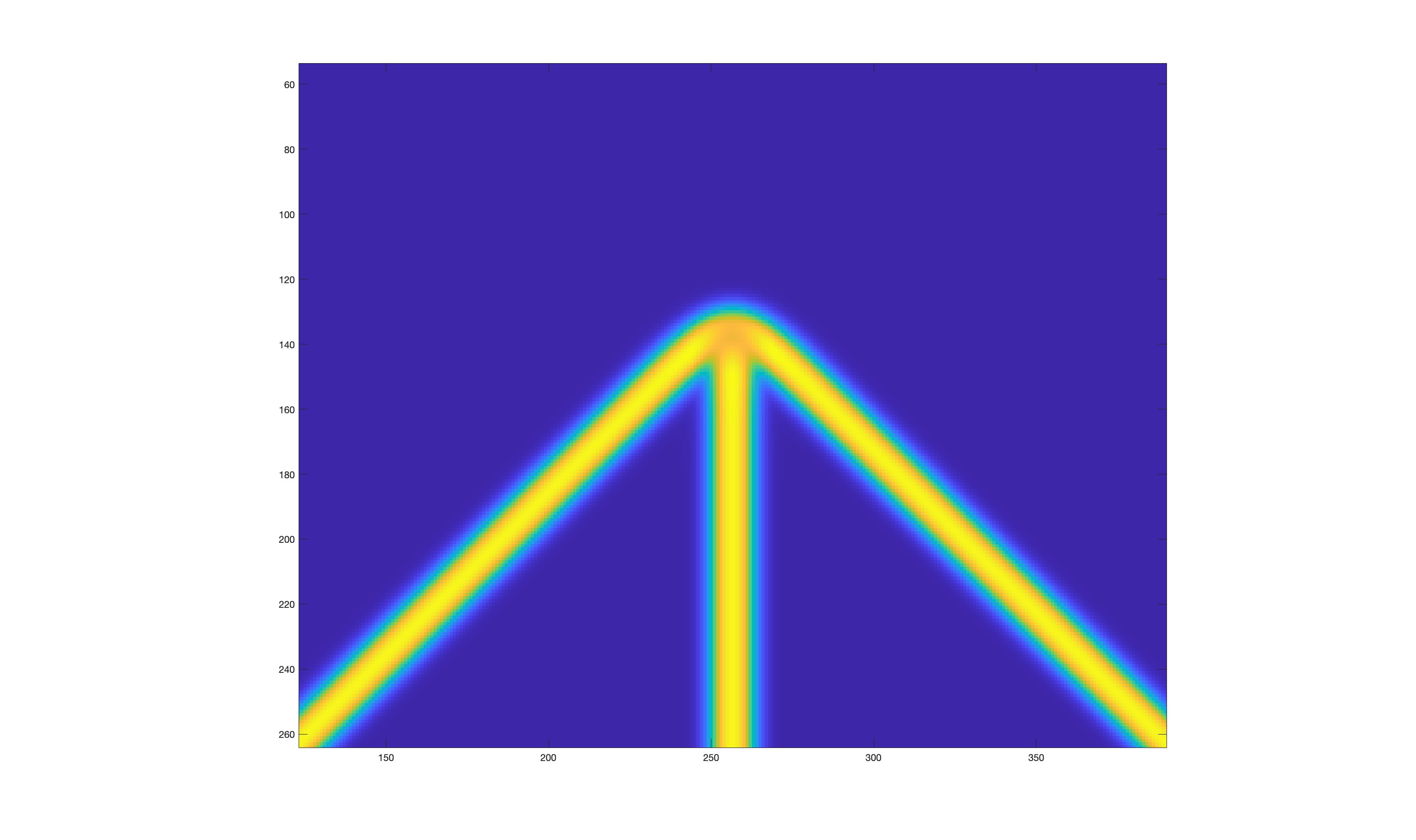}%
            % \label{subfig:drag_junction}%
        %}\hfill
        %\subfloat[Final configuration at $T = 0.2$]{%
            \includegraphics[width=.5\linewidth]{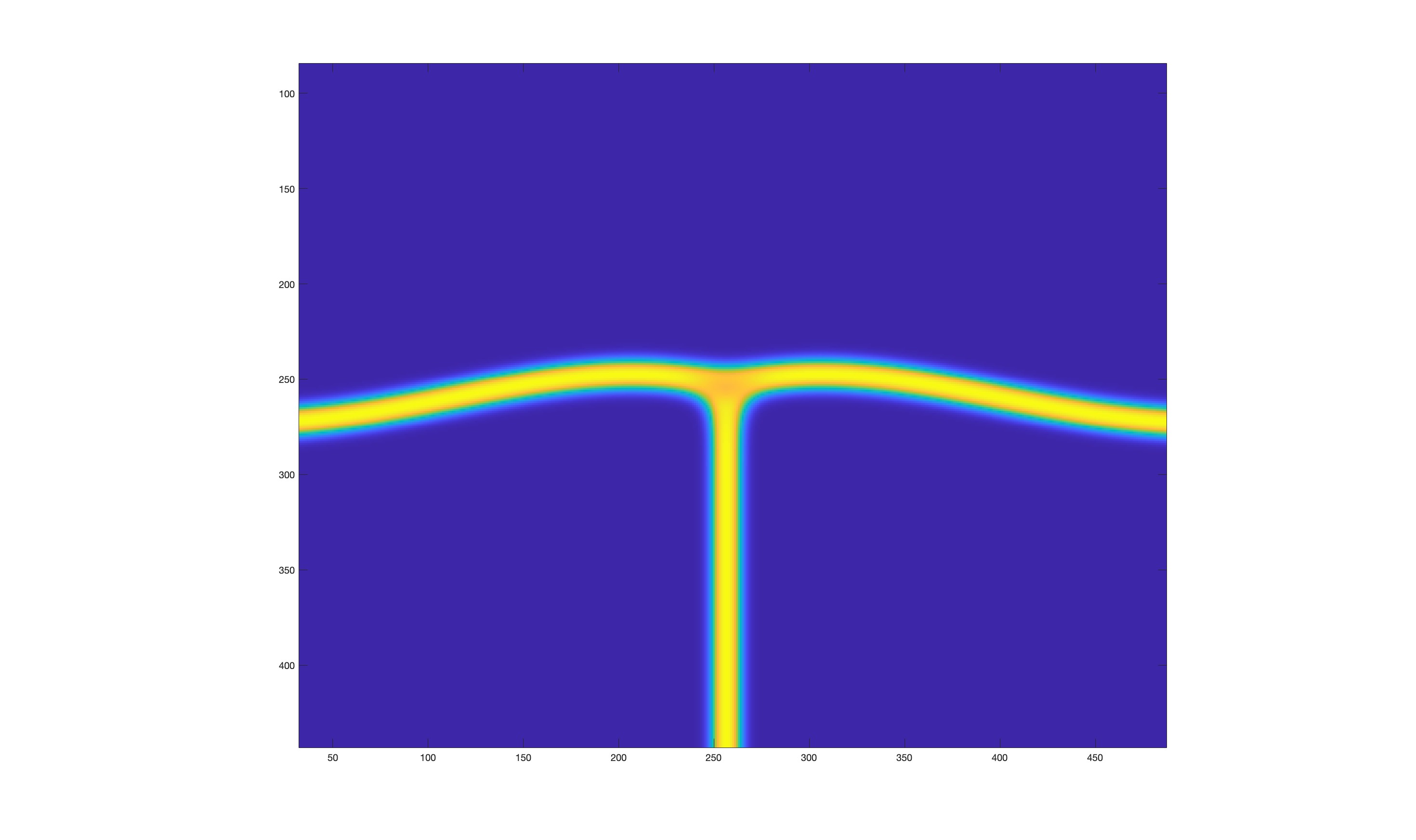}%
            % \label{subfig:drag_surface}%
        %}
        \caption{Left panel: Initial condition for proposed method \eqref{eq:dragphasefield} \& \eqref{eq:mobility4} allowing $N=4$ phases, with $m_{TJ}=2$. Right panel: Computed solution at time $T=0.2$. Angle at the junction has changed dramatically.}
        \label{fig:drag_evolution}
\end{figure}

\begin{figure}[H]
        \includegraphics[width=.5\linewidth]{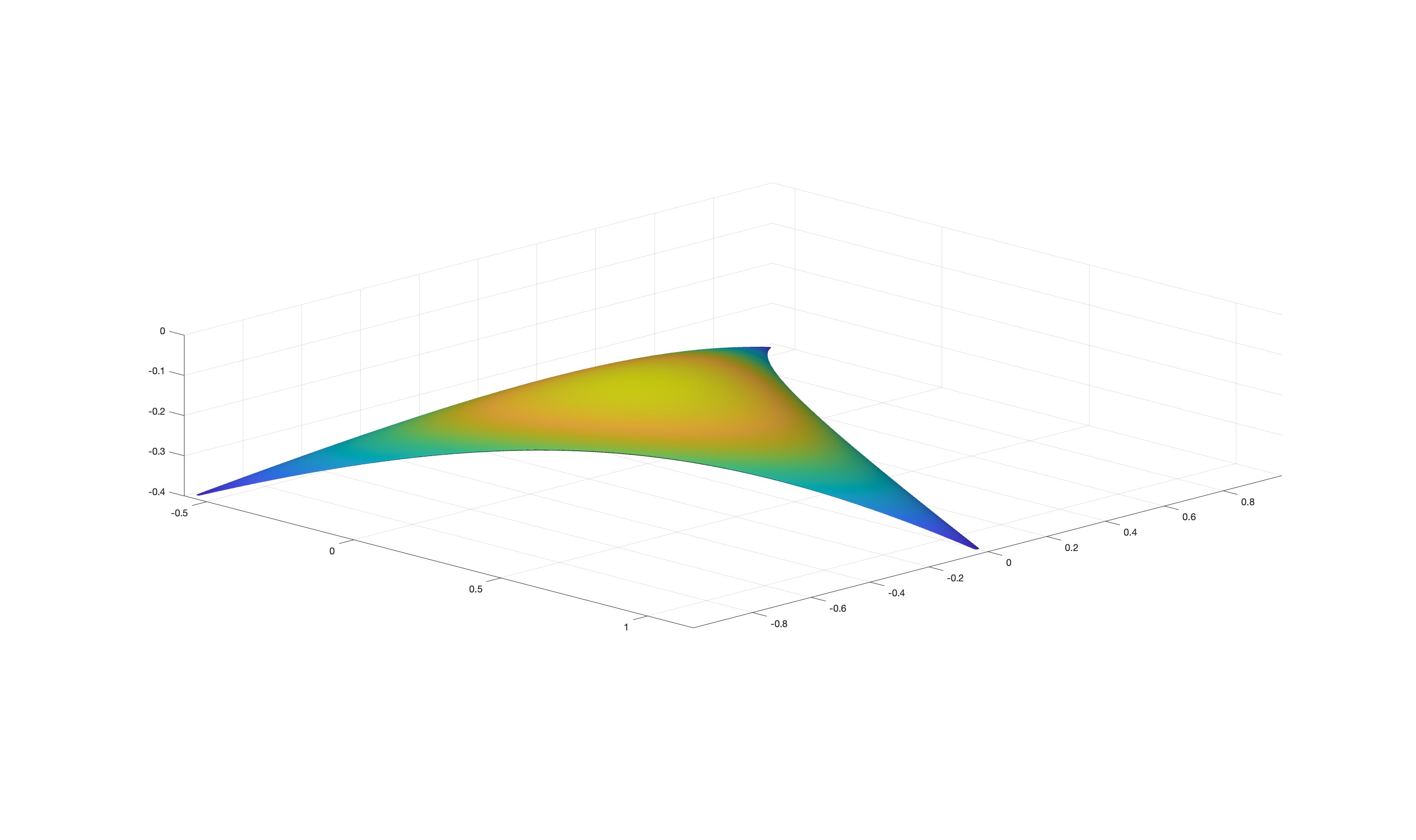}
            \includegraphics[width=.5\linewidth]{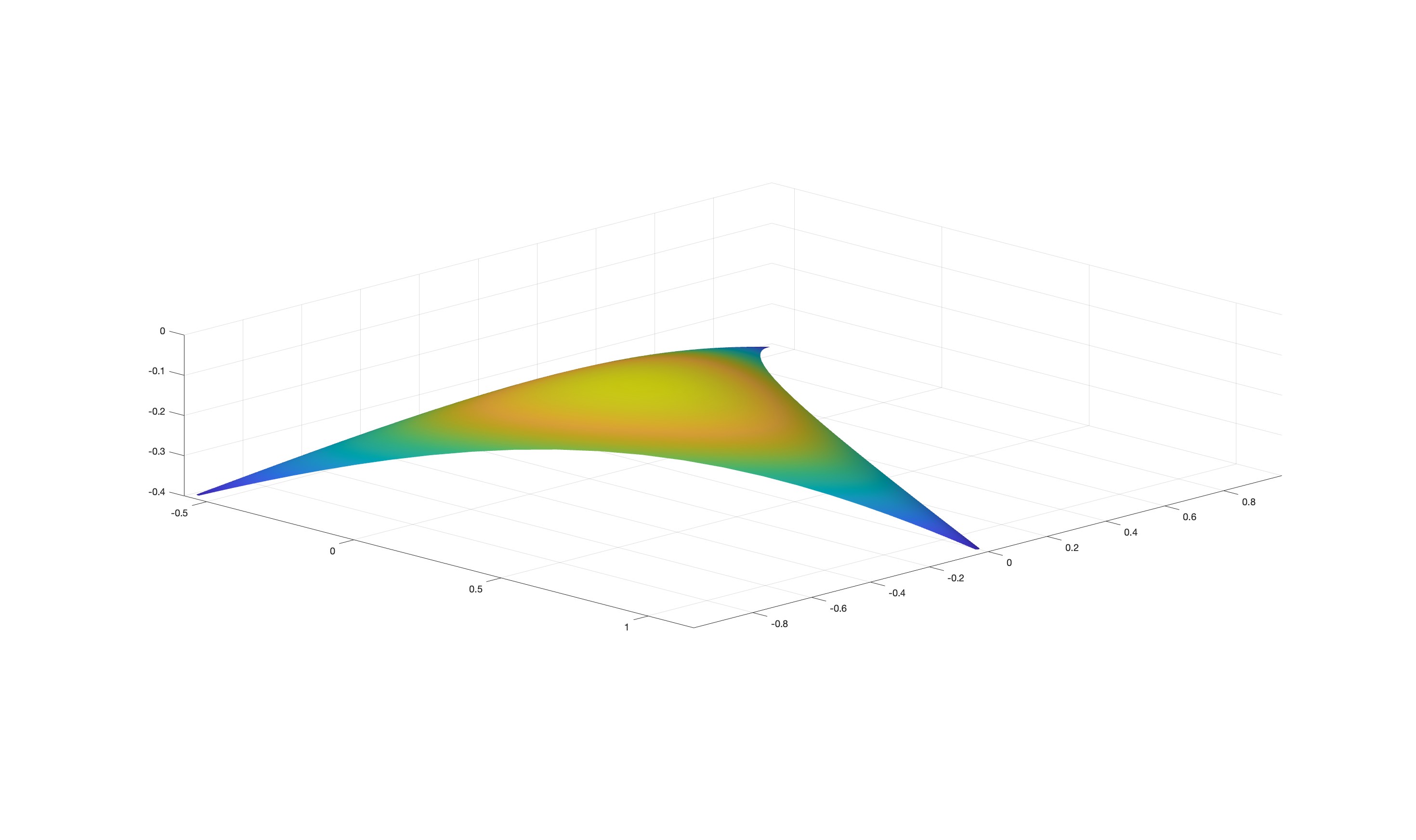}
        \caption{Left panel: The image $u(B_\varepsilon,t)$ of a small neighborhood $B_\varepsilon$ of the triple junction at time $T=0.2$ shown in the right panel of Figure \ref{fig:drag_evolution}. Right panel: Range $\mathcal{S}_{ijk} = u_*(\R^2)$ of the equilibrium solution $u_*$ as defined in \eqref{eq:eqsol}.}
        \label{fig:drag_surface}
\end{figure}

The natural extension of $A_W$ to the setting of $N\geq 4$ phases is then
\begin{equation}
    A_W \coloneqq \text{Area}(\mathcal{S}_{ijk}) \approx 1.026
\end{equation}
which does not depend on the choice of $i,j,k\in\{1,2,3,4\}$ due to the symmetry of $W$ with respect to its four wells. 

At this stage, we only provide numerical evidence that supports \eqref{eq:conjecture}, leaving an analysis of its validity to future work.
To demonstrate \eqref{eq:conjecture}, we simulate \eqref{eq:dragphasefield} \& \eqref{eq:mobility4} starting from an initial configuration with extreme angles (Figure \ref{fig:drag_evolution}) and track the evolution of the quantity
\begin{equation}
\label{eq:quantity}
    \int_{\Omega}\frac{J_u(x)}{A_W}\;dx \approx 1
\end{equation}
over time (Figure \ref{fig:massrecord}) as the angles at the junction change dramatically.
It can be seen to be indeed very nearly one for all time, after an initial rapid transition.

\vfill

\begin{figure}[H]
    \centering        \includegraphics[width=.9\linewidth]{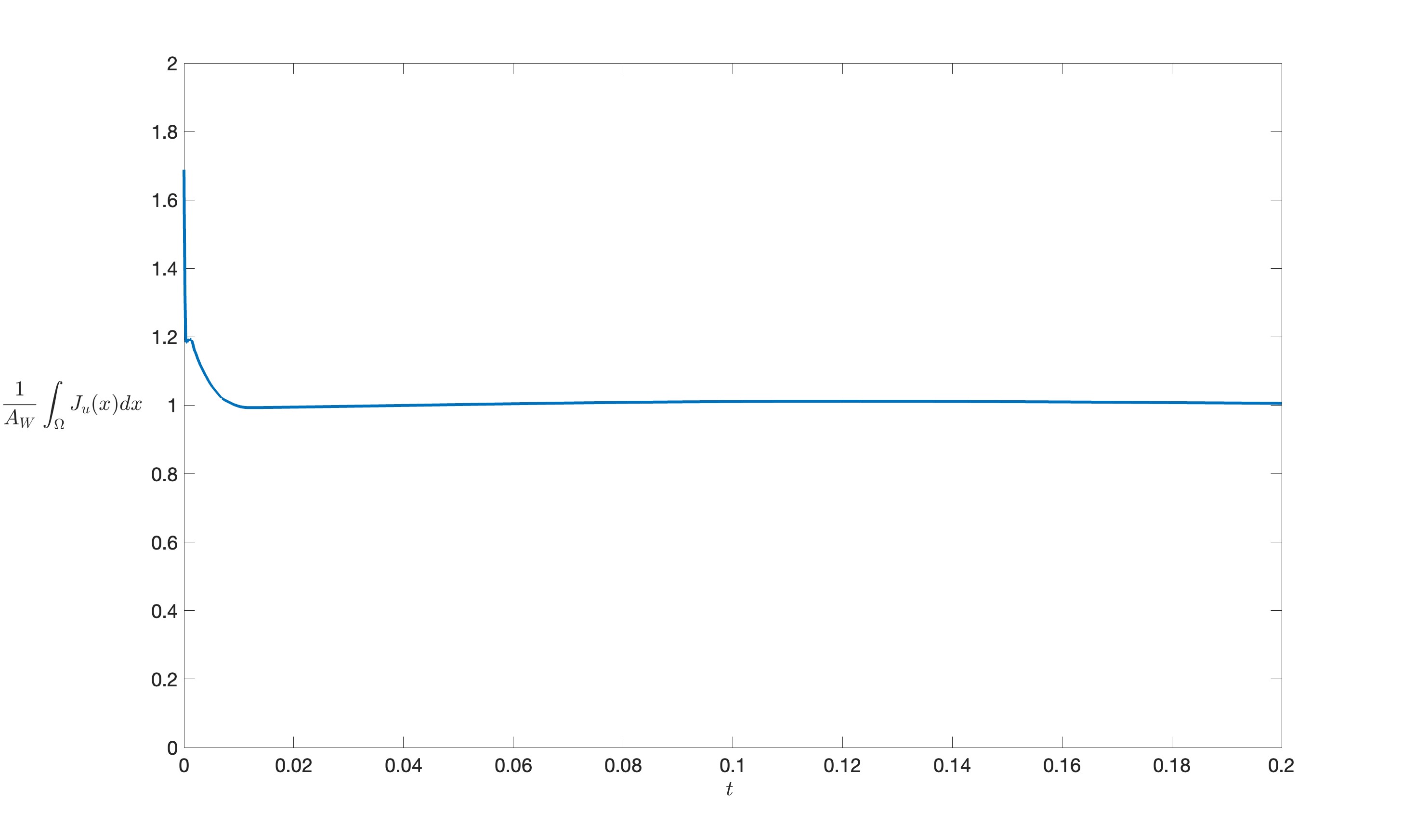}%
        \caption{Evolution of the quantity $\int_{\Omega}\frac{J_u(x)}{A_W}\;dx$ where $u(x,t)$ solves the proposed phase-field method \eqref{eq:dragphasefield} \& \eqref{eq:mobility4} for $N\geq 4$ phases. Initial and final conditions are shown in Figure \ref{fig:drag_evolution}.}
        \label{fig:massrecord}
\end{figure}

\subsection{Numerical simulations for $N=4$ phases}

Various numerical convergence studies, analogous to those in Section \ref{sec:numerics}, for the natural extension \eqref{eq:dragphasefield} \& (\ref{eq:mobility4}) to $N\geq 4$ phases of our new phase-field method all indicate clear convergence to the desired sharp interface model.
In the interest of space, we only show results from the most challenging test, Figure \ref{fig:forkasym4}, where the junctions formed between $N=4$ phases travel along curved paths while their angles change in time.
Plot \ref{fig:error_forkasym_4phase} and Table \ref{tab:table_forkasym_4phase} show very clean linear convergence rate with respect to the parameter $\varepsilon$.

\clearpage

\begin{figure}[H]
\begin{center}
        \subfloat[$n=256$]{%
            \includegraphics[width=.31\linewidth,
    trim=20cm 0cm 20cm 0cm,
    clip]{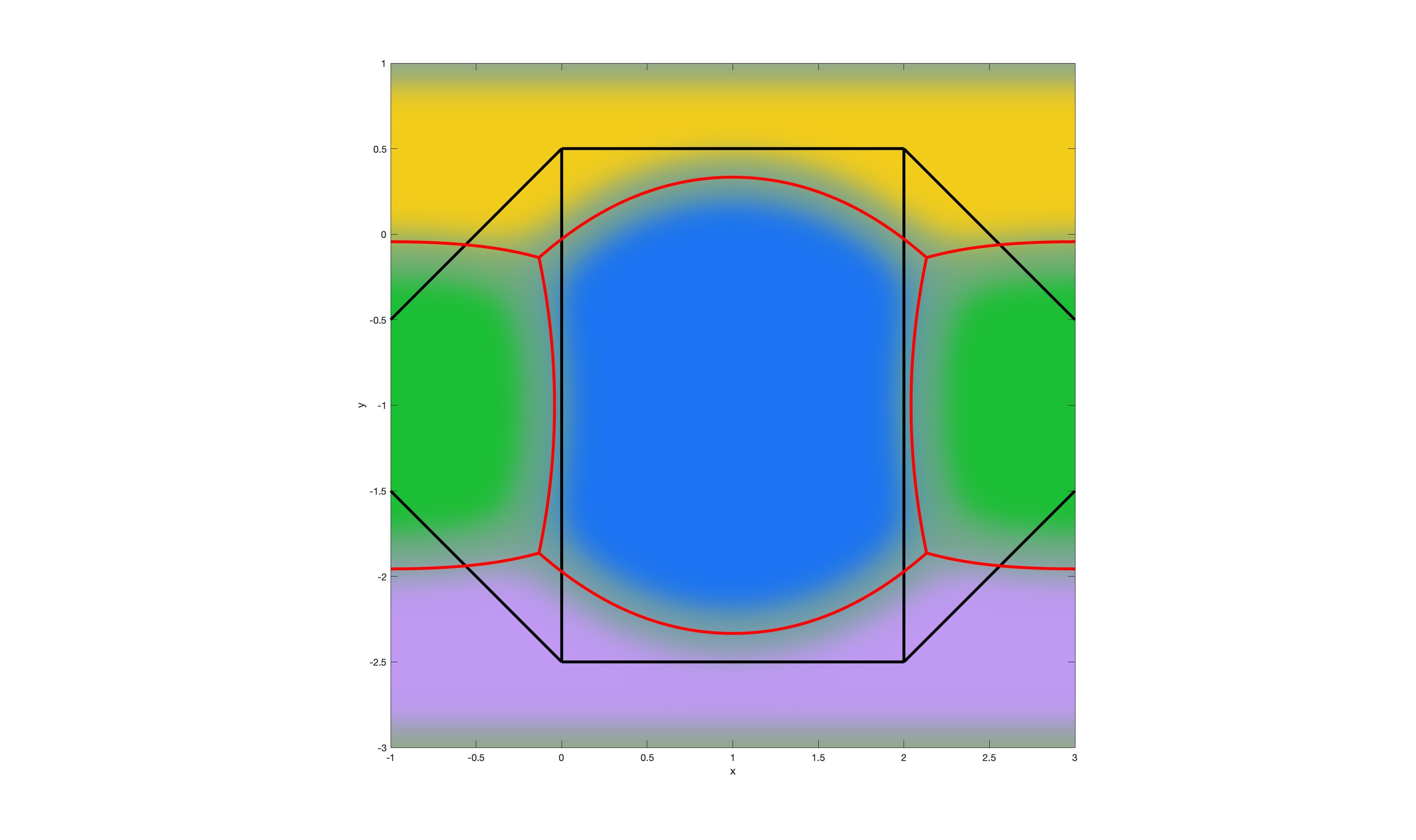}%
            \label{subfig:fork_asym_4phase_1}%
        }\hspace{40pt}
        \subfloat[$n=512$]{%
           \includegraphics[width=.31\linewidth,
    trim=20cm 0cm 20cm 0cm,
    clip]{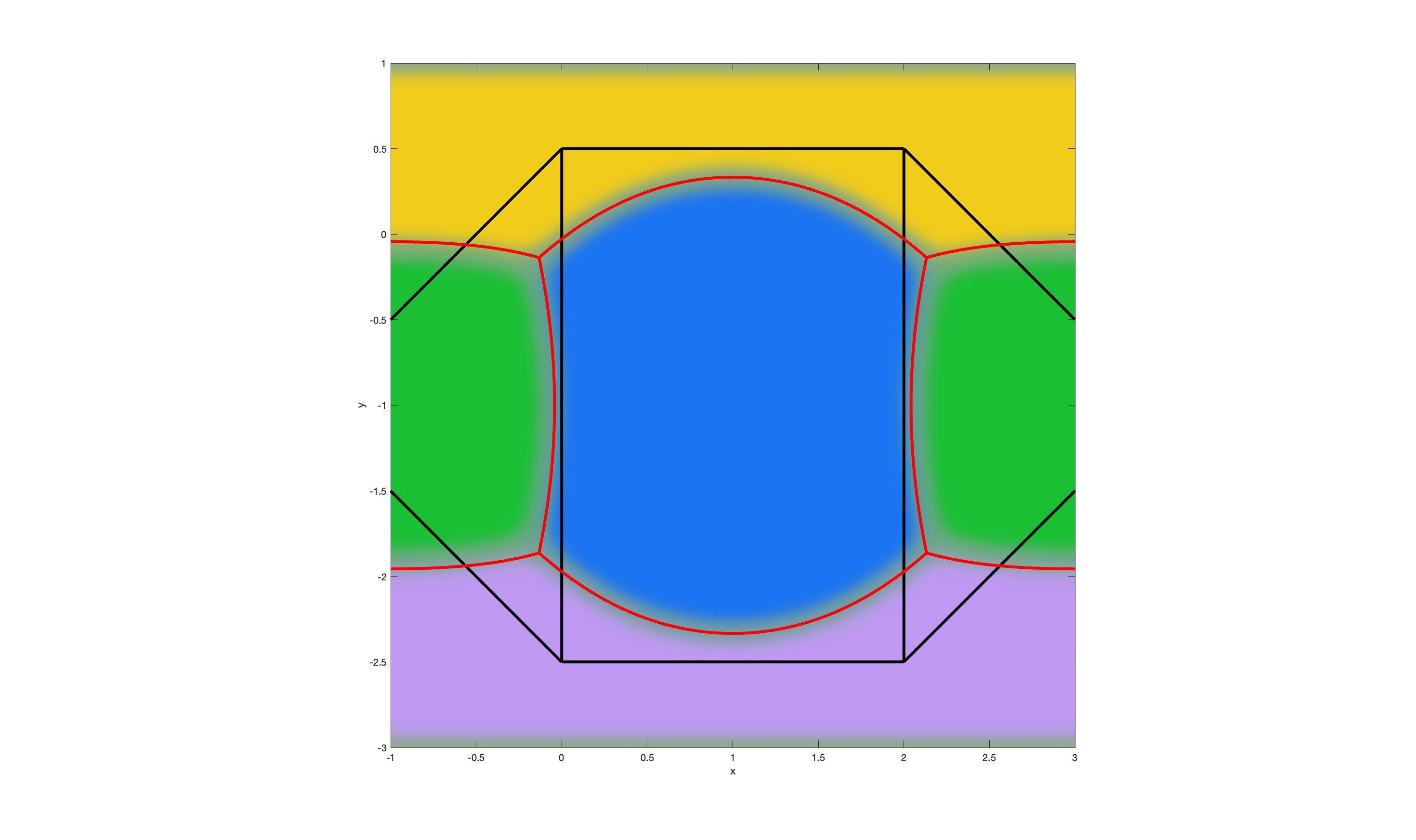}%
            \label{subfig:fork_asym_4phase_2}%
        }\\
        \subfloat[$n=1024$]{%
            \includegraphics[width=.31\linewidth,
    trim=20cm 0cm 20cm 0cm,
    clip]{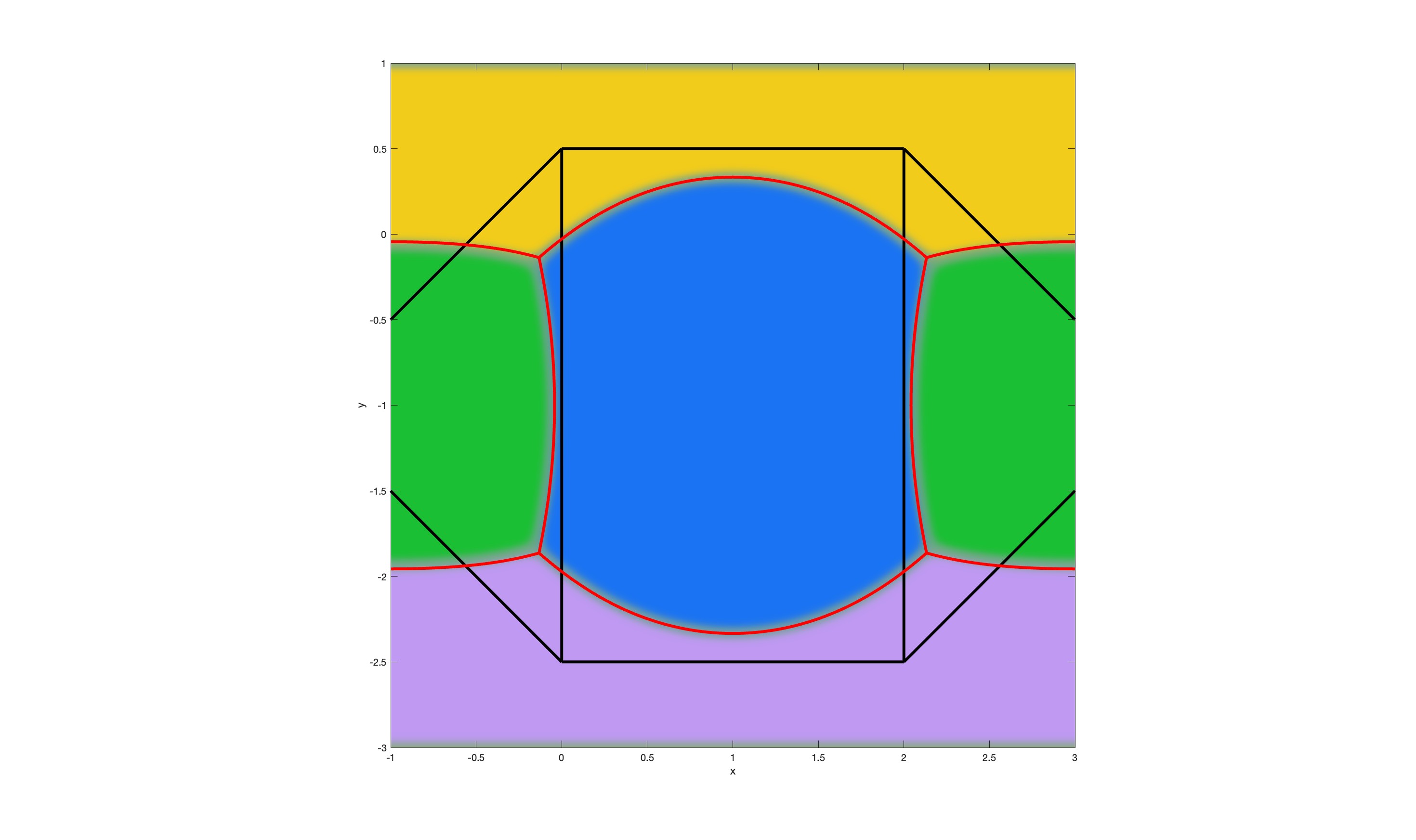}%
            \label{subfig:fork_asym_4phase_3}%
        }\hspace{40pt}
        \subfloat[$n=2048$]{%
            \includegraphics[width=.31\linewidth,
    trim=20cm 0cm 20cm 0cm,
    clip]{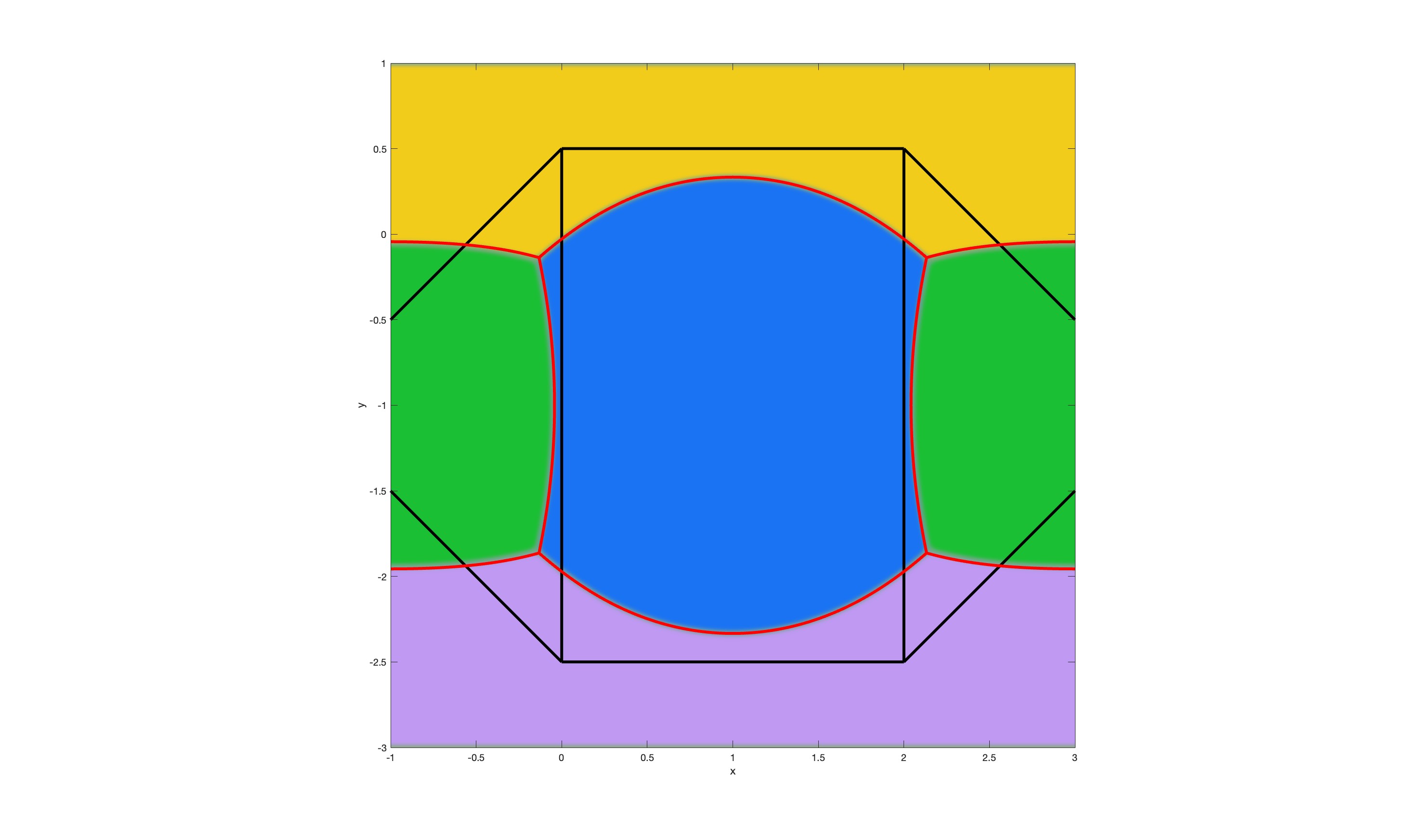}%
            \label{subfig:fork_asym_4phase_4}%
        }\hfill
        \caption{Convergence study with $N=4$ phases with a configuration where the motion of the triple junction is not aligned with any of the interfaces. Benchmark curves (red) were obtained via a front tracking method. $m_{TJ} = 10$, final time $T=0.4$.}
        \label{fig:forkasym4}
\end{center}
\end{figure}

\begin{center}
\begin{minipage}[t]{0.54\textwidth}
    \centering
    \vspace{30pt}
    \def\arraystretch{1.5}
    \small
    \resizebox{\linewidth}{!}{
        \begin{tabular}{|l|l|l|l|l|}
        \hline
        $\delta x$ & $\eps$ &  $\delta t$            & Error  & Order \\ \hline
        4/(256-1) = 0.0156    & 0.0941  &  $4.92 \times 10^{-5}$ & 1.419  & -     \\ \hline
        4/(512-1) = 0.00782    & 0.0469  &  $1.22 \times 10^{-5}$ & 0.716  & 0.987 \\ \hline
        4/(1024-1) = 0.00391    & 0.0234 &  $3.05 \times 10^{-6}$ & 0.360  & 0.991  \\ \hline
        4/(2048-1) = 0.00195    & 0.0117 &  $7.63\times 10^{-7}$  & 0.181 & 0.987 \\ \hline
        \end{tabular}
        }
        \vspace{30pt}
        \captionof{table}{Error and order of convergence for the numerical test shown in Figure \ref{fig:forkasym4}, using the proposed method \eqref{eq:dragphasefield} \& \eqref{eq:mobility4}.}
        \label{tab:table_forkasym_4phase}
    \end{minipage}
    \hfill
    \begin{minipage}[t]{0.44\textwidth}
        \centering
        % \vspace{0pt}
        \begin{figure}[H]
            \centering
            \includegraphics[width=1\linewidth]{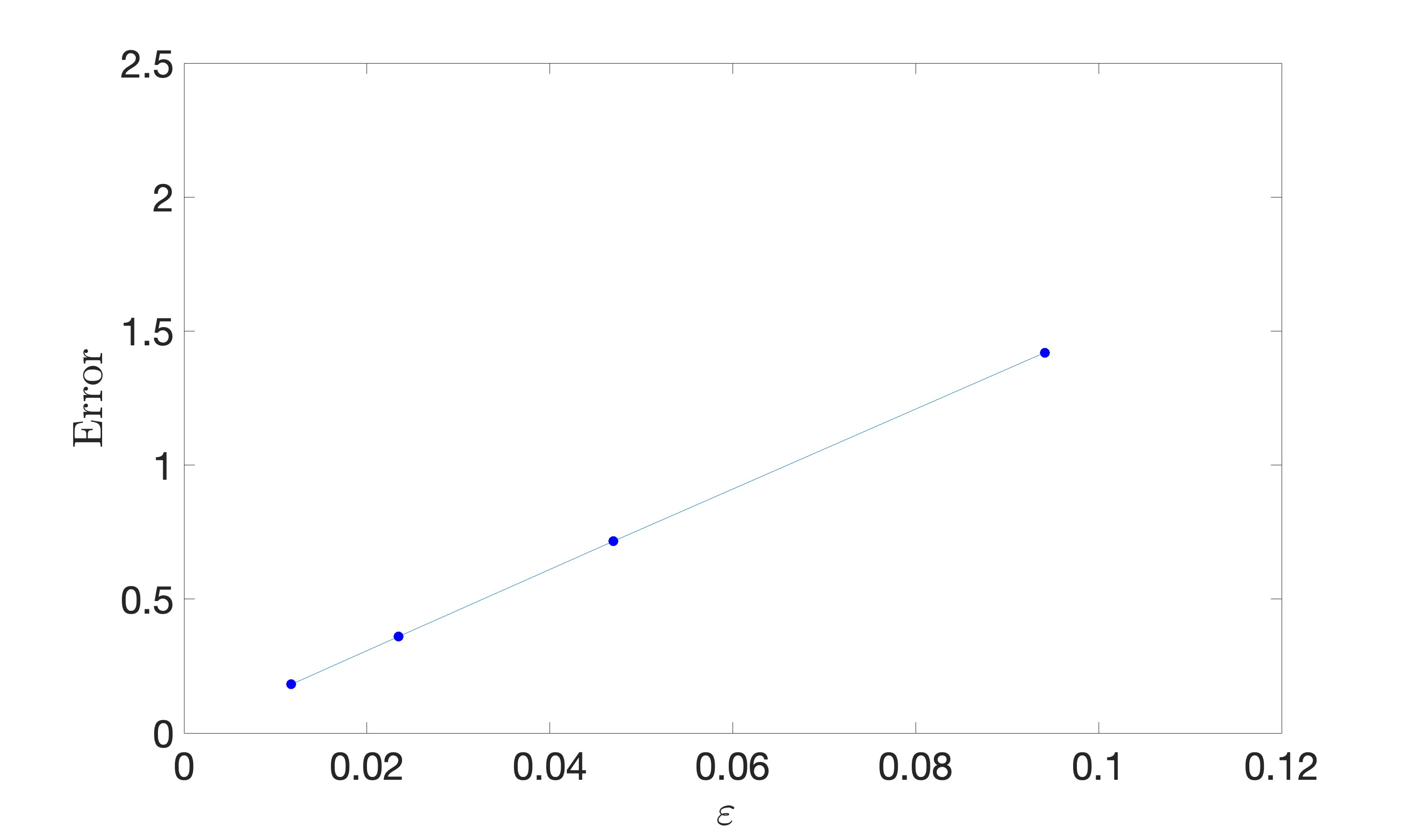}%
            \caption{Behavior of the error in the numerical solution generated by the proposed method \eqref{eq:dragphasefield} \& \eqref{eq:mobility4}, on the test case of Figure \ref{fig:forkasym4}.}
            \label{fig:error_forkasym_4phase}
        \end{figure}
    \end{minipage}
\end{center}

% %\clearpage

% \begin{figure}[H]
%     \centering
%     \includegraphics[width=0.7\linewidth]{figures/error_forkasym_4phase.jpg}%
%     %\caption{Plot of error against $\eps$}
%     \caption{\textcolor{blue}{Behavior of the error in the numerical solution generated by the proposed method \eqref{eq:dragphasefield} \& \eqref{eq:mobility4}, on the test case of Figure \ref{fig:forkasym4}.}}
%     \label{fig:error_forkasym_4phase}
% \end{figure}

% \begin{table}

% \centering
% \def\arraystretch{1.5}
% \begin{tabular}{|l|l|l|l|l|}
% \hline
% $\delta x$ & $\eps$ &  $\delta t$            & Error  & Order \\ \hline
% 4/(256-1) = 0.0156    & 0.0941  &  $4.92 \times 10^{-5}$ & 1.419  & -     \\ \hline
% 4/(512-1) = 0.00782    & 0.0469  &  $1.22 \times 10^{-5}$ & 0.716  & 0.987 \\ \hline
% 4/(1024-1) = 0.00391    & 0.0234 &  $3.05 \times 10^{-6}$ & 0.360  & 0.991  \\ \hline
% 4/(2048-1) = 0.00195    & 0.0117 &  $7.63\times 10^{-7}$  & 0.181 & 0.987 \\ \hline
% \end{tabular}
% %\caption{Table of errors}
% \caption{\textcolor{blue}{Error and order of convergence for the numerical test shown in Figure \ref{fig:forkasym4}, using the proposed method \eqref{eq:dragphasefield} \& \eqref{eq:mobility4}.}}
% \label{tab:table_forkasym_4phase}

% \end{table}

\clearpage

In Figure \ref{fig:switching}, we verify that our phase-field model \eqref{eq:dragphasefield} \& \eqref{eq:mobility4} handles topological changes seamlessly. Here, a typical neighbor switching event (T1 process) occurs:
Two triple junctions collide, then split, resulting in junctions formed by a new triplet of phases.
Figure \ref{fig:manyjunctions} shows a four-phase evolution with many junctions through many topological events, seamlessly handled by method \eqref{eq:dragphasefield} \& (\ref{eq:mobility4}).
Figure \ref{fig:Ju} provides clear evidence of the integer-valuedness (measuring the number of junctions) of the integral \eqref{eq:quantity} during the evolution shown in Figure \ref{fig:manyjunctions}, as long as junctions remain apart.

\begin{figure}[H]
        \subfloat[$t = 0.14$]{%
            \includegraphics[width=.5\linewidth]{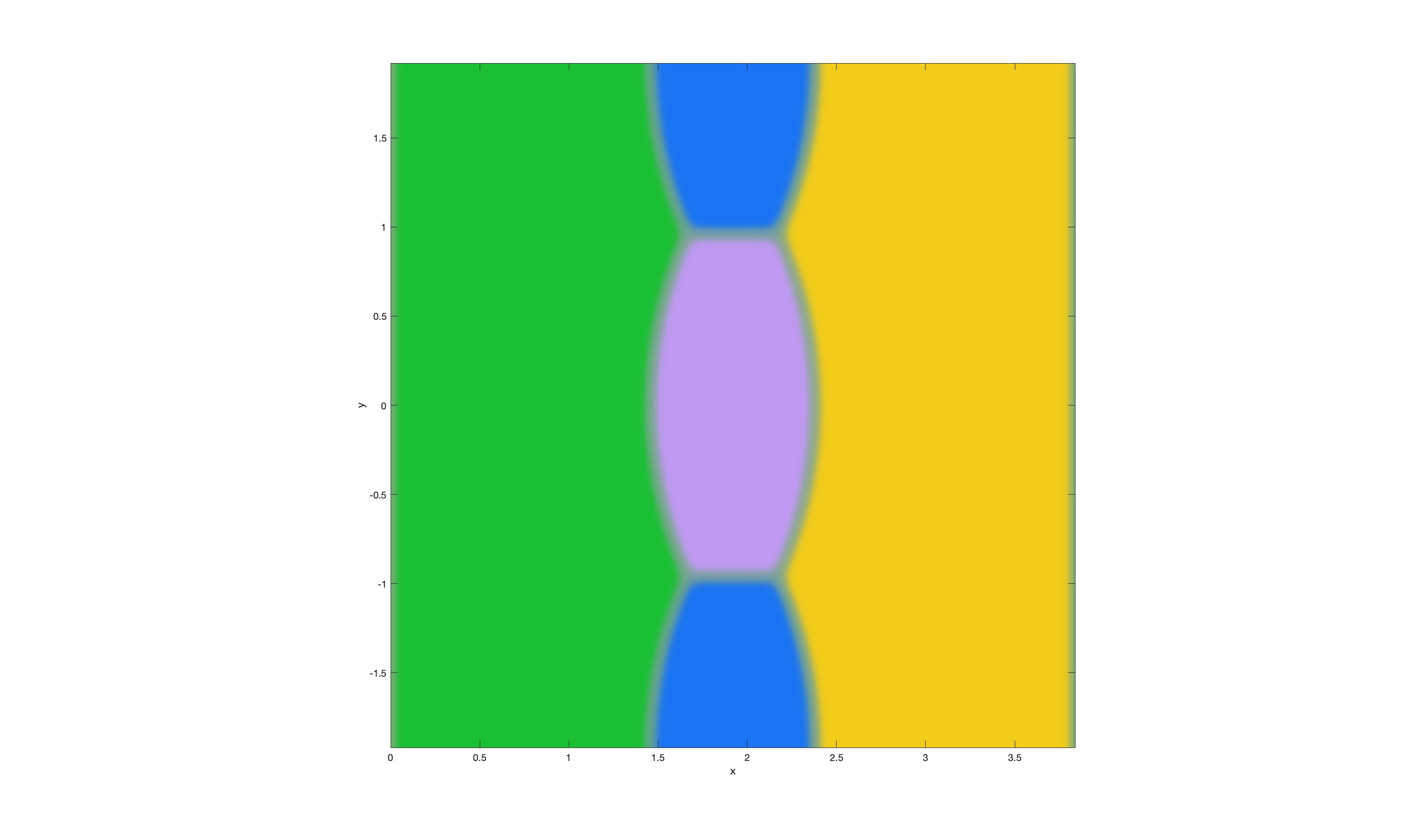}%
            \label{subfig:switching1}%
        }\hfill
        \subfloat[$t = 0.56$]{%
            \includegraphics[width=.5\linewidth]{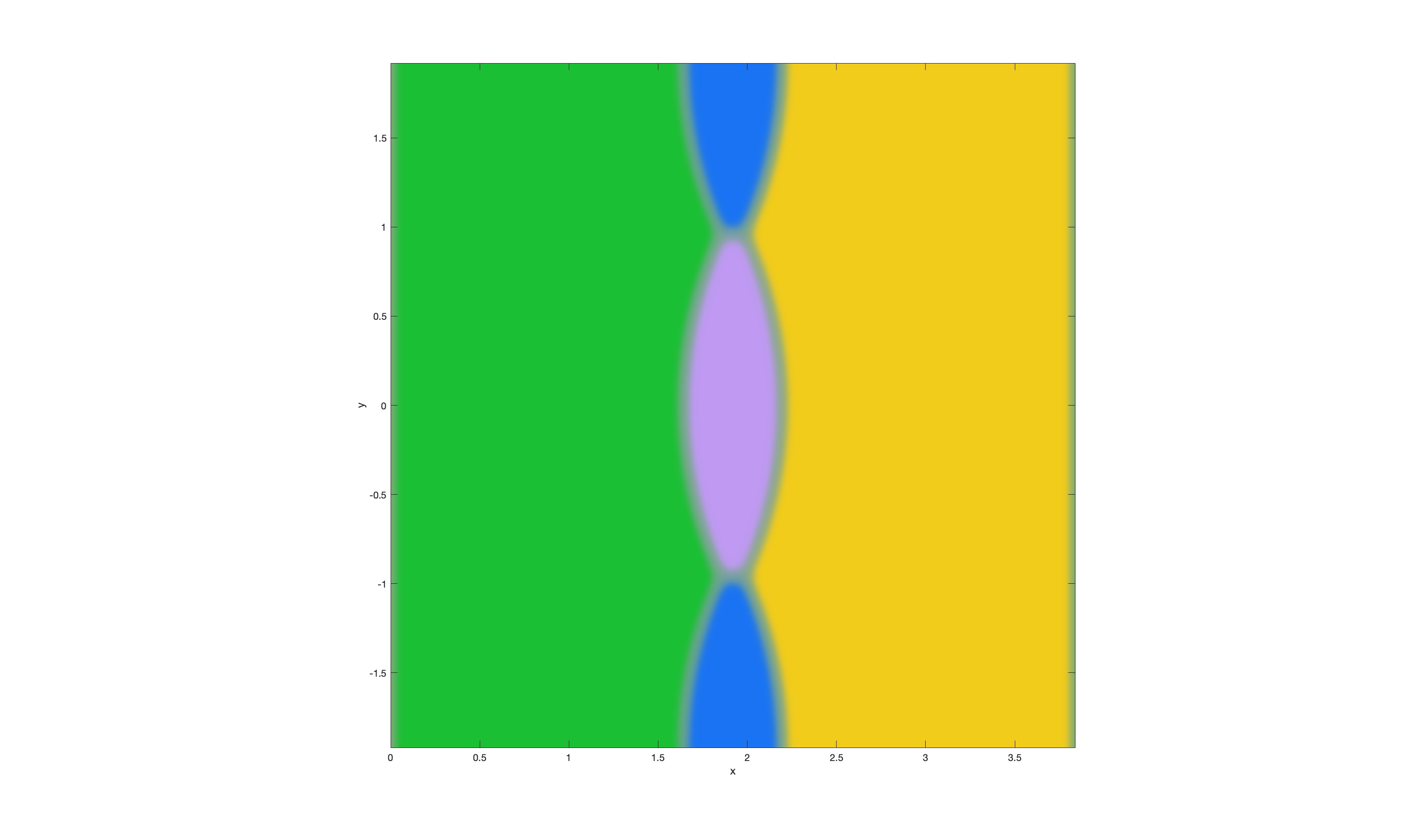}%
            \label{subfig:switching2}%
        }\\
        \subfloat[$t = 0.70$]{%
            \includegraphics[width=.5\linewidth]{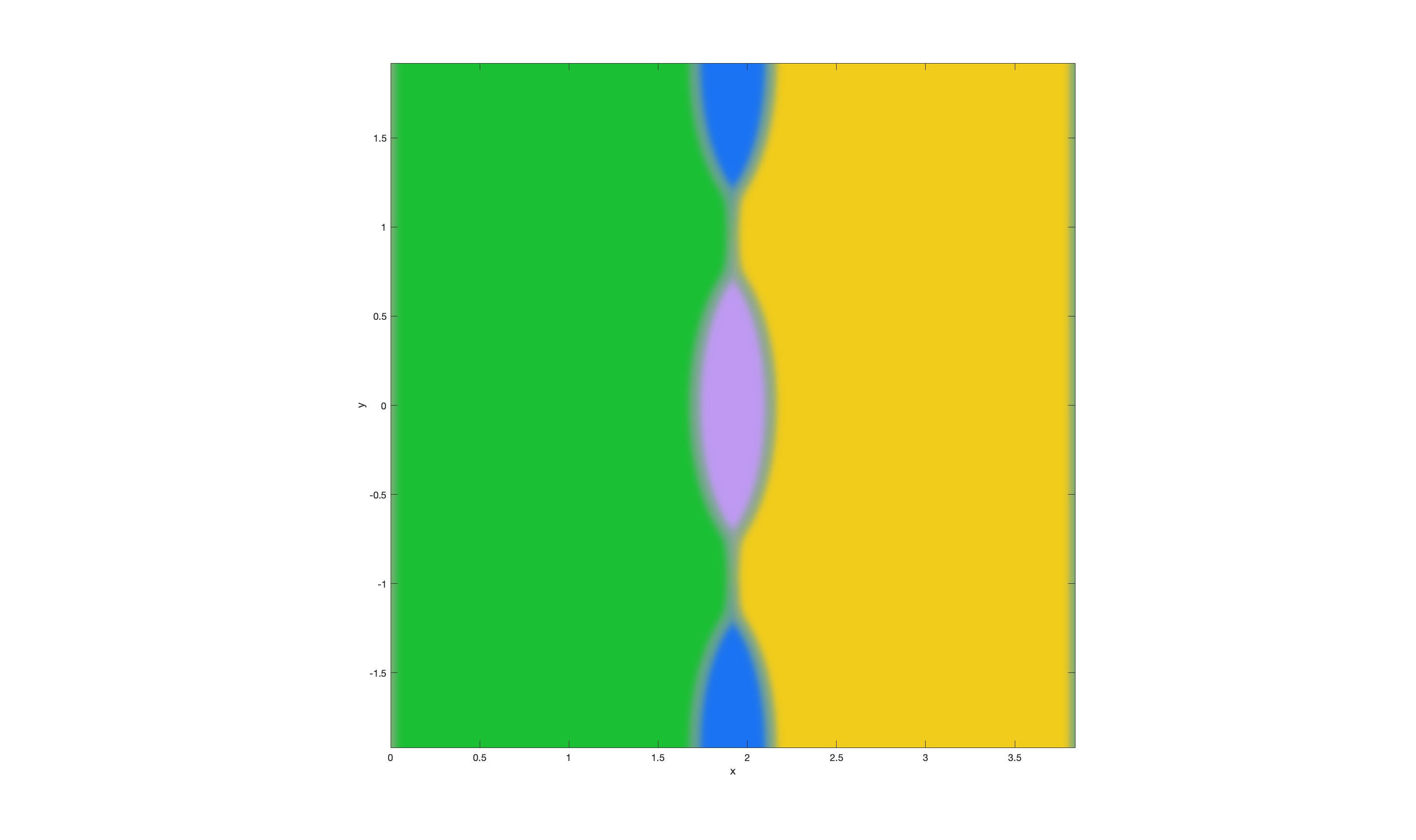}%
            \label{subfig:switching3}%
        }\hfill
        \subfloat[$t = 0.84$]{%
            \includegraphics[width=.5\linewidth]{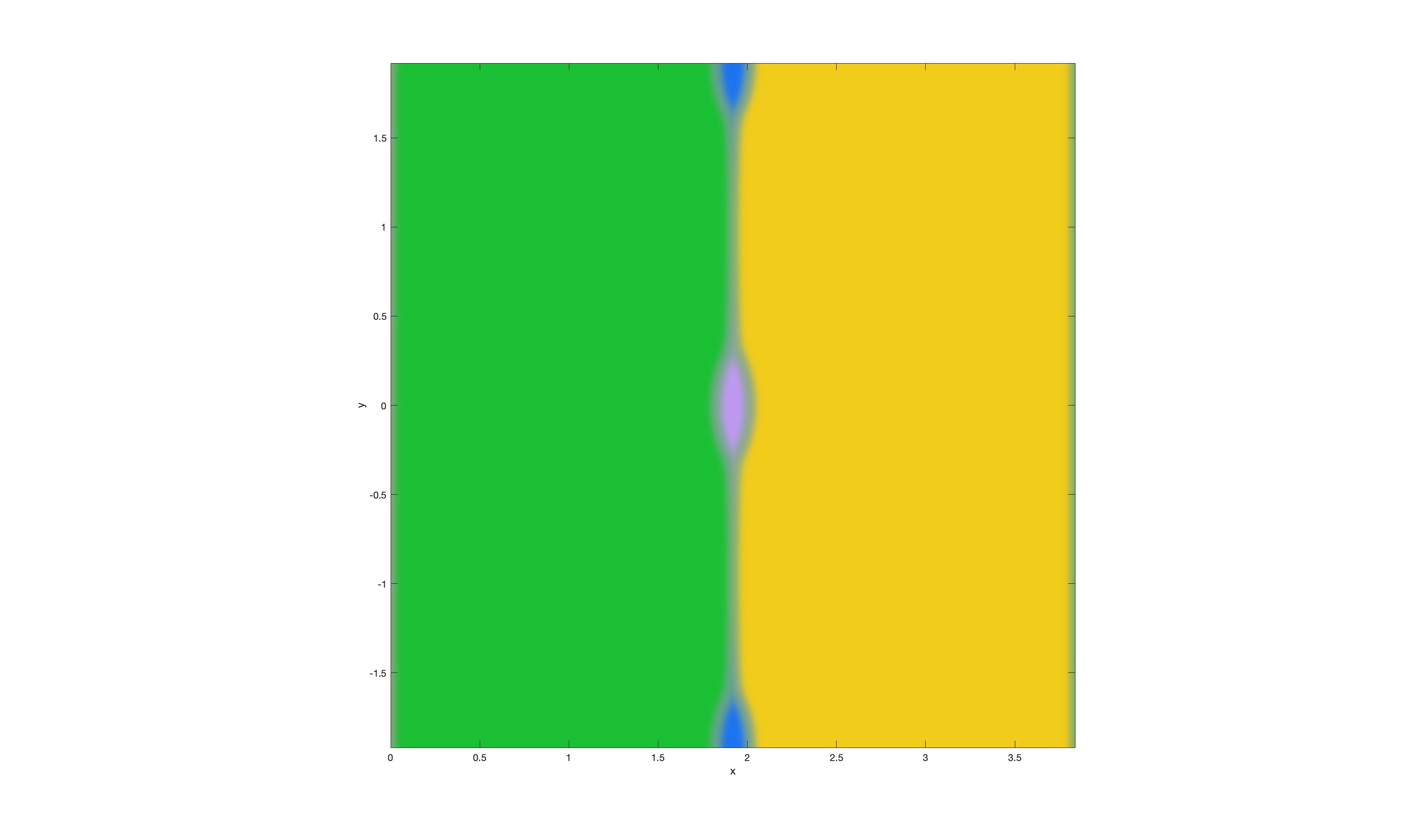}%
            \label{subfig:switching4}%
        }\hfill
        \caption{A neighbor switching event -- a common topological change -- takes place as expected using \eqref{eq:dragphasefield} \& (\ref{eq:mobility4}), with four phases in participation.}
        \label{fig:switching}
\end{figure}

%\clearpage

%\begin{figure}[H]
%    \centering        \includegraphics[width=.9\linewidth]{figures/fourphase_step.jpg}%
%        \caption{A plot of the quantity $\frac{1}{A_{W}}\int_\Omega J_u(x) \;dx$ against time $t$. Here, $u(x,t) \in \R^3$.}
%        \label{fig:jacobian4phase}
%\end{figure}
%
%\begin{figure}[H]
%    \centering        \includegraphics[width=.9\linewidth]{figures/fourphase_counting_1.jpg}%
%        \caption{Evolution of grain boundaries (4 phases) at time $t=0.061$. Each phase is represented by a different color and there are 18 triple junctions.}
%        \label{fig:fourphase1}
%\end{figure}
%
%\begin{figure}[H]
%    \centering        \includegraphics[width=.9\linewidth]{figures/fourphase_counting_2.jpg}%
%        \caption{Evolution of grain boundaries (4 phases) at time $t=0.24$. There are 16 triple junctions.}
%        \label{fig:fourphase2}
%\end{figure}
%
%\begin{figure}[H]
%    \centering        \includegraphics[width=.9\linewidth]{figures/fourphase_counting_3.jpg}%
%        \caption{Evolution of grain boundaries (4 phases) at time $t=0.55$. There are 12 triple junctions.}
%        \label{fig:fourphase3}
%\end{figure}
%
%\begin{figure}[H]
%    \centering        \includegraphics[width=.9\linewidth]{figures/fourphase_counting_4.jpg}%
%        \caption{Evolution of grain boundaries (4 phases) at time $t=0.91$. There are 10 triple junctions.}
%        \label{fig:fourphase4}
%\end{figure}

\begin{figure}[H]
\centering
\includegraphics[width=.9\linewidth]{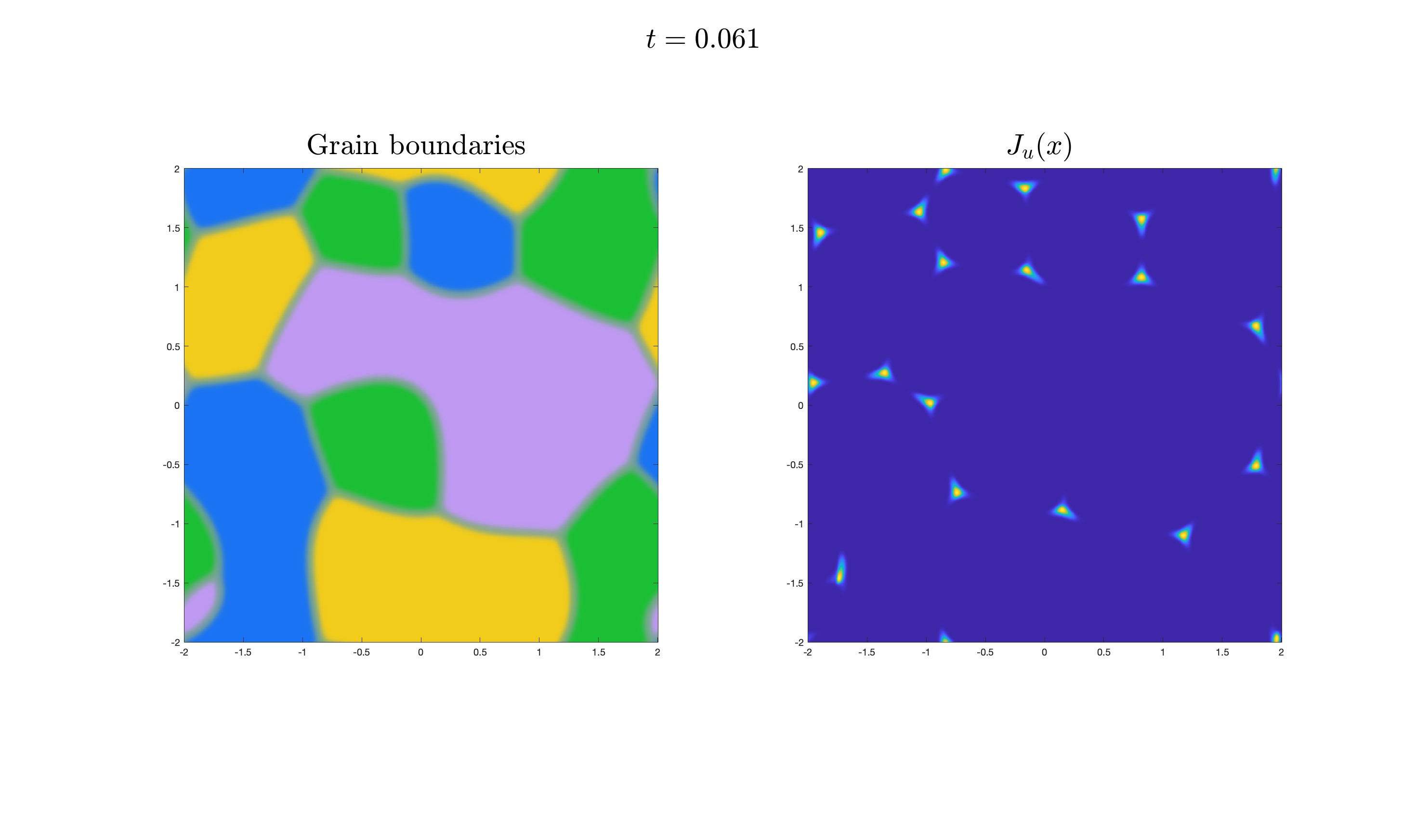}
\includegraphics[width=.9\linewidth]{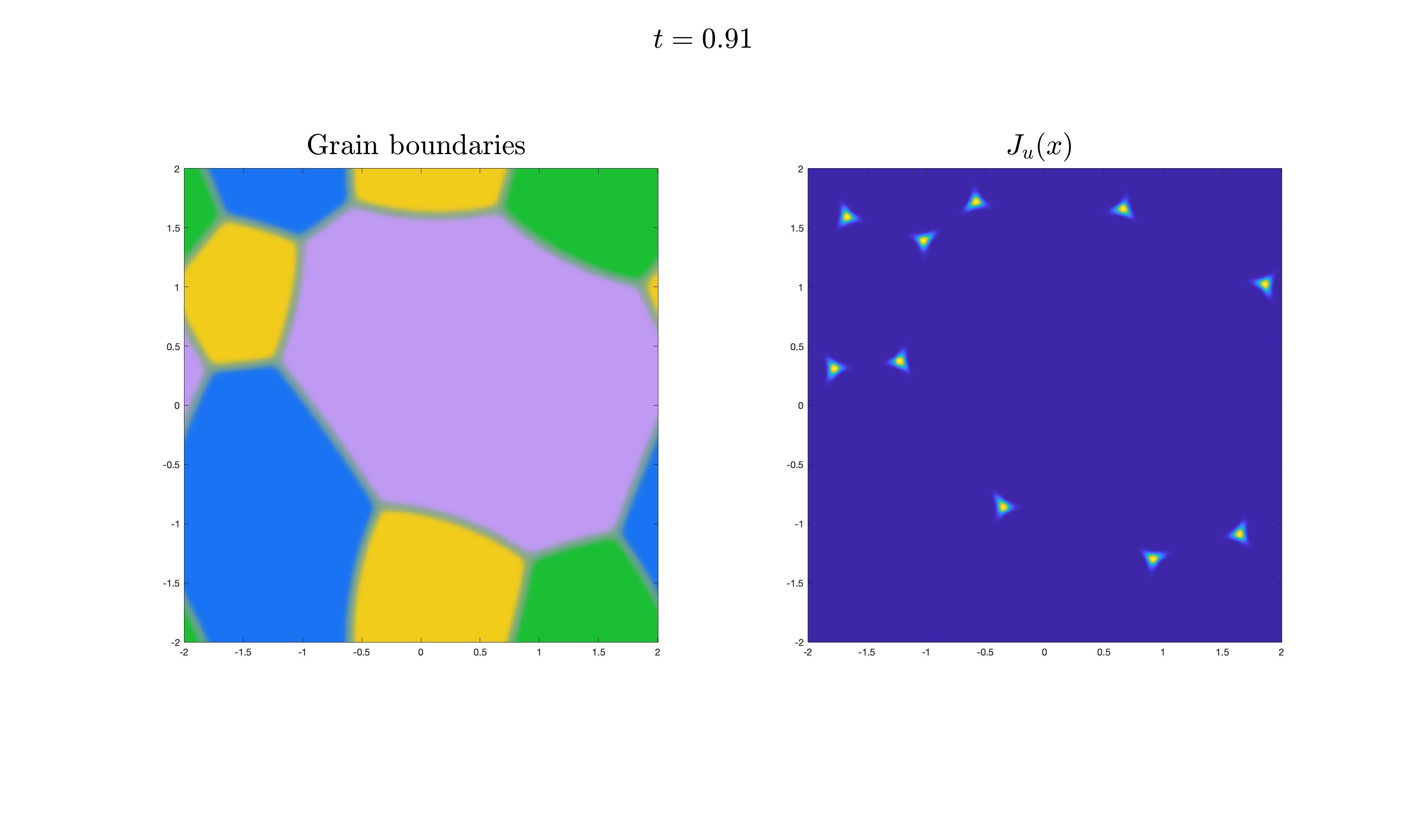}
\caption{Evolution of four phases and many junctions through many topological events, as automatically handled by our method \eqref{eq:dragphasefield} \& (\ref{eq:mobility4}).}
\label{fig:manyjunctions}
\end{figure}

\begin{figure}[H]
\centering
\includegraphics[width=.9\linewidth]{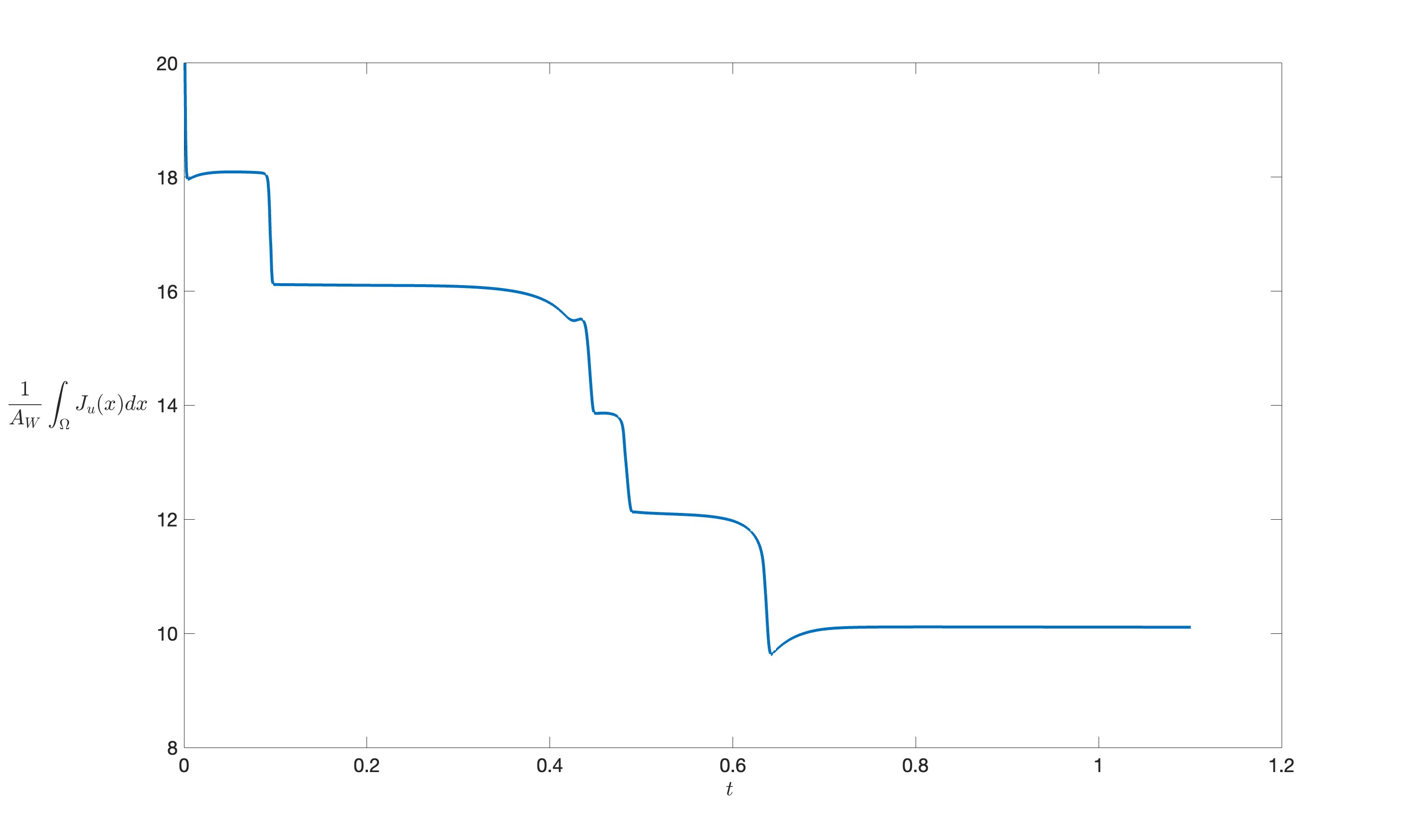}
\caption{The integral $\frac{1}{A_W} \int J_u \, dx$ given in \eqref{eq:quantity} as it evolves in time during the four-phase simulation shown in Figure \ref{fig:manyjunctions}.
Although there are many different junctions with time varying angles, this integral remains very close to integer valued, except during topological events, as expected.}
\label{fig:Ju}
\end{figure}

\section{Conclusion}

We have proposed a new phase-field method for curvature motion of networks with triple junction drag, supported by both formal asymptotic analysis and strong numerical evidence of convergence to the correct sharp-interface limit.
Along the way, we obtained a new formula that expresses the number of triple junctions as an integral of the order parameter that applies to a variety of vectorial Allen-Cahn systems. Future work will include extension to three dimensions, and more efficient numerical methods such as semi-implicit schemes for (\ref{eq:udiscretized}) with low per time step computational cost.

\section{Acknowledgements}
Yuchuan Yang and Selim Esedo\={g}lu were supported by NSF DMS-2410272.

\newpage

\bibliographystyle{plain}
\bibliography{12-references}

@article{peng_2022,
	author = {X. Peng and A. Bhattacharya and S. K. Naghibzadeh and D. Kinderlehrer and R. Suter and K. Dayal and G. S. Rohrer},
	journal = {Physical Review Materials},
	title = {{Comparison of simulated and measured grain volume changes during grain growth}},
	volume = {6},
	year = {2022}}

@article{rohrer_annual,
	author = {G. S. Rohrer and I. Chesser and A. R. Kruse and S. K. Naghibzadeh and Z. Xu and K. Dayal and E. A. Holm},
	journal = {Annual Review of Materials Research},
	number = {1},
	pages = {1-23},
	title = {{Grain boundary migration in polycrystals}},
	volume = {53},
	year = {2023}}

@inbook{herring,
	author = {C. Herring},
	chapter = {Surface tension as a motivation for sintering},
	editor = {W. Kingston},
	pages = {143-179},
	publisher = {McGraw Hill},
	title = {{The Physics of Powder Metallurgy}},
	year = {1951}}

@article{bronsardreitich,
	author = {Bronsard, L. and Reitich, F.},
	journal = {Archive for Rational Mechanics and Analysis},
	number = {4},
	pages = {355--379},
	title = {{On three-phase boundary motion and the singular limit of a vector-valued Ginzburg-Landau equation}},
	volume = {124},
	year = {1993}}

@article{epshteyn,
	author = {Epshteyn, Y. and Liu, C. and Mizuno, M.},
	journal = {SIAM Journal on Mathematical Analysis},
	number = {3},
	pages = {3072-3097},
	title = {{Motion of grain boundaries with dynamic lattice misorientations and with triple junctions drag}},
	volume = {53},
	year = {2021}}

@article{mantegazzanovagatortorelli,
	author = {Mantegazza, C. and Novaga, M. and Tortorelli, V.},
	journal = {Annali della Scuola normale superiore di Pisa, Classe di scienze},
	month = {03},
	title = {{Motion by Curvature of Planar Networks}},
	volume = {3},
	year = {2003}}

@article{GOTTSTEIN2002703,
title = {{Triple junction drag and grain growth in 2D polycrystals}},
journal = {Acta Materialia},
volume = {50},
number = {4},
pages = {703-713},
year = {2002},
issn = {1359-6454},
doi = {https://doi.org/10.1016/S1359-6454(01)00391-3},
url = {https://www.sciencedirect.com/science/article/pii/S1359645401003913},
author = {G. Gottstein and L.S. Shvindlerman}
}

@article{lauxsimon,
	author = {Laux, T. and Simon, T.M.},
	journal = {Communications on Pure and Applied Mathematics},
	number = {8},
	pages = {1597-1647},
	title = {{Convergence of the Allen‐Cahn Equation to Multiphase Mean Curvature Flow}},
	volume = {71},
	year = {2018}}

@article{pluda,
	author = {G{\"o}{\ss}wein, M. and Menzel, J. and Pluda, A.},
	doi = {10.4171/IFB/477},
	journal = {Interfaces and Free Boundaries},
	month = {06},
	title = {{Existence and uniqueness of the motion by curvature of regular networks}},
	volume = {25},
	year = {2022}}

@article{johnsonvoorhees,
title = {{A phase-field model for grain growth with trijunction drag}},
journal = {Acta Materialia},
volume = {67},
pages = {134-144},
year = {2014},
issn = {1359-6454},
doi = {https://doi.org/10.1016/j.actamat.2013.12.012},
url = {https://www.sciencedirect.com/science/article/pii/S1359645413009476},
author = {A.E. Johnson and P.W. Voorhees}
}

@article{ALIKAKOS_BETELU_CHEN_2006, 
title={{Explicit stationary solutions in multiple well dynamics and non-uniqueness of interfacial energy densities}}, 
volume={17}, 
DOI={10.1017/S095679250600667X}, 
number={5}, 
journal={European Journal of Applied Mathematics}, 
author={Alikakos, N.D. and Betel\'{u}, S.I. and Chen, X.}, 
year={2006}, 
pages={525–556}
}

@article{BALDO199067,
title = {{Minimal interface criterion for phase transitions in mixtures of Cahn-Hilliard fluids}},
journal = {Annales de l'Institut Henri Poincaré C, Analyse non linéaire},
volume = {7},
number = {2},
pages = {67-90},
year = {1990},
issn = {0294-1449},
author = {S. Baldo}
}

@inproceedings{degiorgi,
  author = {E. De Giorgi},
  title = {{New problems on minimizing movements}},
  booktitle = {Boundary value problems for PDE and
applications},
  year = {1993},
  pages = {81-98},
  address = {Masson}
}

@article{allencahn,
title = {{A microscopic theory for antiphase boundary motion and its application to antiphase domain coarsening}},
journal = {Acta Metallurgica},
volume = {27},
number = {6},
pages = {1085-1095},
year = {1979},
issn = {0001-6160},
author = {S.M. Allen and J.W. Cahn}
}

@misc{yangesedoglu25,
      title={Curvature Flow of Networks with Triple Junction Drag and Grain Rotation}, 
      author={Yang, Y. and  Esedoglu, S.},
      year={2025},
      eprint={2509.06125},
      note={Available at \url{https://arxiv.org/abs/2509.06125}}
}

@article{irregularnetworks,
author = {Lira, J. and Mazzeo, R. and Pluda, A. and Sáez, M.},
title = {Short-time existence for the network flow},
journal = {Communications on Pure and Applied Mathematics},
volume = {76},
number = {12},
pages = {3968-4021},
year = {2023}
}

@article{lqchen,
author = {L. Q. Chen},
title = {Phase-field models for microstructure evolution},
journal = {Annual Review of Materials Research},
volume = {32},
pages = {113-140},
year = {2002}
}

@article{osher_sethian,
	author = {S. Osher and J. Sethian},
	journal = {Journal of Computational Physics},
	pages = {12--49},
	title = {Fronts propagating with curvature-dependent speed: {A}lgorithms based on {H}amilton-{J}acobi formulation},
	volume = {79},
	year = {1988}
}

@inproceedings{mbo92,
	author = {B. Merriman and J. K. Bence and S. J. Osher},
	booktitle = {Proceedings of the Computational Crystal Growers Workshop},
	editor = {J. Taylor},
	pages = {73-83},
	publisher = {AMS},
	title = {Diffusion generated motion by mean curvature},
	year = {1992}
}

@article{mullins,
	author = {W. W. Mullins},
	journal = {J. Appl. Phys.},
	pages = {900-904},
	title = {Two dimensional motion of idealized grain boundaries},
	volume = {27},
	year = {1956}}

@article{barmak,
    author = {K. Barmak and E. Eggeling and D. Kinderleher and R. Sharp and S. Taasan and A. D. Rollett and K. R. Coffey},
    title ={Grain growth and the puzzle of its stagnation in thin films: {T}he curious tale of a tail and an ear},
    journal={Progress in Materials Science},
    year = {2013},
    volume = {58},
    pages = {987-1055}
}

@article{bernacki,
author = {M Barnacki and R. E. Loge and T. Coupez},
title = {Level set framework for the finite-element modeling of recrystallization and grain growth in polycrystalline materials},
journal ={Scripta Materialia},
volume = {64},
number = {6},
pages = {525-528},
year = {2011}
}

@article{elsey1,
author={M. Elsey and S. Esedo{\=g}lu and P. Smereka},
title={Diffusion generated motion for grain growth in two and three dimensions},
journal={Journal of Computational Physics},
volume = {228},
number = {21},
pages = {8015-8033},
year = {2011}
}

@article{esedoglu_otto,
	author = {S. Esedo{\=g}lu and F. Otto},
	institution = {Max Planck Institute for Mathematics in the Sciences},
	journal = {Communications on Pure and Applied Mathematics},
	number = {5},
	pages = {808-864},
	title = {Threshold dynamics for networks with arbitrary surface tensions},
	volume = {68},
	year = {2015}}

@article{elsey2,
	author = {M. Elsey and S. Esedo{\=g}lu and P. Smereka},
	journal = {Proceedings of the Royal Society A: Mathematical, Physical, and Engineering Sciences},
	pages = {381-401},
	title = {Large scale simulations of normal grain growth via diffusion generated motion},
	volume = {467:2126},
	year = {2011}}

@article{elsey3,
	author = {M. Elsey and S. Esedo{\=g}lu},
	institution = {UM},
	journal = {Mathematics of Computation},
	number = {312},
	pages = {1721-1756},
	title = {Threshold dynamics for anisotropic surface energies},
	volume = {87},
	year = {2018}}

@article{saye_sethian_2011,
	author = {R. I. Saye and J. A. Sethian},
	journal = {Proceedings of the National Academy of Sciences},
	pages = {19498-19503},
	title = {The Voronoi implicit interface method for computing multiphase physics},
	volume = {108},
	year = {2011}}

@article{mckenna,
author = {I. M. McKenna and S. O. Poulsen and E. M. Lauridsen and W. Ludwig and P. W. Voorhees},
title = {Grain growth in four dimensions: {A} comparison between simulation and experiment},
journal = {Acta Materialia},
volume = {78},
year = {2014},
pages = {125-134}
}

@article{lqchen2,
author = {L.-Q. Chen and W. Yang},
title = {Computer simulation of the domain dynamics of a quenched system with large number of nonconserved order parameters: {T}he grain-growth kinetics},
journal = {Physical Review B},
year = {1994},
volume = {50},
number = {21},
pages = {15752}
}

@article{fan,
author = {D. Fan and C. Geng and L.-Q. Chen},
title = {Computer simulation of topological evolution in {2-D} grain growth using a continuum diffuse-interface model},
journal = {Acta Materialia},
year = {1997},
volume = {45},
number = {3},
pages = {1115-1126}
}

@article{RSK,
 author = {J. Rubinstein and P. Sternberg and J. B. Keller},
 journal = {SIAM Journal on Applied Mathematics},
 number = {1},
 pages = {116--133},
 publisher = {Society for Industrial and Applied Mathematics},
 title = {Fast Reaction, Slow Diffusion, and Curve Shortening},
 volume = {49},
 year = {1989}
}

@book{alikakos2018elliptic,
  author    = {N.D. Alikakos and G. Fusco and P. Smyrnelis},
  title     = {Elliptic Systems of Phase Transition Type},
  series    = {Progress in Nonlinear Differential Equations and Their Applications},
  volume    = {91},
  publisher = {Birkh{\"a}user},
  year      = {2018}
}

@ARTICLE{ginzburglandau,
  title    = "The Jacobian and the {Ginzburg-Landau} energy",
  author   = "R.L. Jerrard and H.M. Soner",
  journal  = "Calculus of Variations and Partial Differential Equations",
  volume   =  14,
  number   =  2,
  pages    = "151--191",
  year     =  2002
}

@article{miyoshi2025,
title = {Phase-field framework for data-driven estimation of finite grain boundary junction mobilities},
journal = {Computational Materials Science},
volume = {259},
pages = {114161},
year = {2025},
issn = {0927-0256},
author = {E. Miyoshi and A. Yamanaka},
}

@article{miyoshi2026,
  title={Grain Growth Kinetics under Triple-Junction Drag: Phase-Field Simulations in Two and Three Dimensions},
  author={E. Miyoshi},
  journal={ISIJ International},
  volume={advpub},
  pages={ISIJINT-2025-361},
  year={2026}
 }

@article{steinbach,
title = {A generalized field method for multiphase transformations using interface fields},
journal = {Physica D: Nonlinear Phenomena},
volume = {134},
number = {4},
pages = {385-393},
year = {1999},
author = {I. Steinbach and F. Pezzolla},
}

@ARTICLE{modica_mortola,
  author = {L. Modica and S. Mortola},
  title = {Un esempio di Gamma-convergenza},
  journal = {Boll. Un. Mat. Ital. B (5)},
  year = {1977},
  volume = {14},
  pages = {285--299},
  number = {1}
}

\end{document}